\documentclass[a4paper]{article}

\DeclareFontShape{OT1}{cmr}{m}{scit}{<->ssub*cmr/m/sc}{}
\usepackage{slantsc}

\usepackage[margin=1in]{geometry} 

\usepackage{amsmath}
\usepackage{amsthm}
\usepackage{aliascnt}
\usepackage{amssymb}
\usepackage{graphicx}
\usepackage{xcolor}
\usepackage{mathrsfs} 
\usepackage[utf8]{inputenc}
\usepackage{adjustbox}
\usepackage{placeins}
\usepackage{enumerate}
\usepackage{verbatim}

\usepackage{tikz}
\usetikzlibrary{shapes,calc,math,backgrounds,matrix}

\usepackage{hyperref}
\hypersetup{
	colorlinks,
	allcolors=blue,
	unicode,
	bookmarksopen,
	bookmarksdepth=2,
	pdftitle={On Realizing Reconfiguration Graphs of Cliques},
	pdfauthor={Duc A. Hoang},
	pdfkeywords={Reconfiguration graph, Graph realization, Clique, Token sliding, Token jumping, Johnson graph}
}

\usepackage[capitalise,noabbrev]{cleveref} 
\crefname{cond}{Condition}{Conditions}
\creflabelformat{cond}{(#2C#1#3)}
\crefname{eitem}{}{}
\creflabelformat{eitem}{(#2#1#3)}
\crefname{step}{\textbf{Step}}{\textbf{Steps}}
\creflabelformat{step}{{\bf #2#1#3}}

\usepackage[%
backend=biber, 
bibstyle=numeric, 
citestyle=numeric-comp,
maxcitenames=3,
maxbibnames=20,
sorting=ydnt
]{biblatex}
\theoremstyle{plain}
\newtheorem{theorem}{Theorem}[section]
\newaliascnt{corollary}{theorem}
\newtheorem{corollary}[corollary]{Corollary}
\aliascntresetthe{corollary}
\newaliascnt{lemma}{theorem}
\newtheorem{lemma}[lemma]{Lemma}
\aliascntresetthe{lemma}
\newtheorem{claim}{Claim}[theorem]
\newaliascnt{axiom}{theorem}

\aliascntresetthe{axiom}
\newaliascnt{conjecture}{theorem}

\aliascntresetthe{conjecture}
\newaliascnt{fact}{theorem}

\aliascntresetthe{fact}
\newaliascnt{hypothesis}{theorem}

\aliascntresetthe{hypothesis}
\newaliascnt{assumption}{theorem}

\aliascntresetthe{assumption}
\newaliascnt{proposition}{theorem}
\newtheorem{proposition}[proposition]{Proposition}
\aliascntresetthe{proposition}
\newaliascnt{criterion}{theorem}

\aliascntresetthe{criterion}
\theoremstyle{definition}
\newaliascnt{definition}{theorem}

\aliascntresetthe{definition}
\newaliascnt{example}{theorem}

\aliascntresetthe{example}
\newaliascnt{remark}{theorem}

\aliascntresetthe{remark}
\newaliascnt{problem}{theorem}

\aliascntresetthe{problem}
\newaliascnt{principle}{theorem}

\aliascntresetthe{principle}

\crefname{theorem}{Theorem}{Theorems}
\Crefname{theorem}{Theorem}{Theorems}
\crefname{corollary}{Corollary}{Corollaries}
\Crefname{corollary}{Corollary}{Corollaries}
\crefname{lemma}{Lemma}{Lemmas}
\Crefname{lemma}{Lemma}{Lemmas}
\crefname{claim}{Claim}{Claims}
\Crefname{claim}{Claim}{Claims}
\crefname{axiom}{Axiom}{Axioms}
\Crefname{axiom}{Axiom}{Axioms}
\crefname{conjecture}{Conjecture}{Conjectures}
\Crefname{conjecture}{Conjecture}{Conjectures}
\crefname{fact}{Fact}{Facts}
\Crefname{fact}{Fact}{Facts}
\crefname{hypothesis}{Hypothesis}{Hypotheses}
\Crefname{hypothesis}{Hypothesis}{Hypotheses}
\crefname{assumption}{Assumption}{Assumptions}
\Crefname{assumption}{Assumption}{Assumptions}
\crefname{proposition}{Proposition}{Propositions}
\Crefname{proposition}{Proposition}{Propositions}
\crefname{criterion}{Criterion}{Criteria}
\Crefname{criterion}{Criterion}{Criteria}
\crefname{definition}{Definition}{Definitions}
\Crefname{definition}{Definition}{Definitions}
\crefname{example}{Example}{Examples}
\Crefname{example}{Example}{Examples}
\crefname{remark}{Remark}{Remarks}
\Crefname{remark}{Remark}{Remarks}
\crefname{problem}{Problem}{Problems}
\Crefname{problem}{Problem}{Problems}
\crefname{principle}{Principle}{Principles}
\Crefname{principle}{Principle}{Principles}

\usepackage[linesnumbered,ruled,vlined,commentsnumbered]{algorithm2e} 

\usepackage[pagewise]{lineno}

\usepackage[colorinlistoftodos]{todonotes}
\definecolor{done}{rgb}{0.55, 0.71, 0.0}
\definecolor{highpriority}{rgb}{0.82, 0.1, 0.26}
\definecolor{lowpriority}{rgb}{1.0, 0.75, 0.0}
\definecolor{mediumpriority}{rgb}{1.0, 0.49, 0.0}
\newcommand{\Mcomment}[3]{%
\ifsubmission%
	\else%
	{\color{#1}\bfseries\sffamily(#3)%
	}%
	\marginpar{\textcolor{#1}{\bfseries\sffamily #2}}%
	\fi%
}
\newcommand{\TaskNewPerson}[3]{%
	\expandafter\newcommand\csname #2\endcsname[1]{\Mcomment{#3}{#2}{##1}}%
	\expandafter\newcommand\csname Task#1\endcsname[2][mediumpriority]{\expandafter\TaskPerson[backgroundcolor=##1]{#1}{##2}}
	}

\newcommand*{\TaskPerson}[3][]{
\ifsubmission%
	\else%
	\todo[inline,#1]{\textbf{(#2)} #3}
	\fi%
}

\newcommand{\email}[1]{\href{mailto:#1}{\texttt{#1}}} 
\newcommand{\sfTS}{\mathsf{TS}} 
\newcommand{\sfTJ}{\mathsf{TJ}} 
\newcommand{\TS}[2]{\sfTS_{#1}(#2)}
\newcommand{\TJ}[2]{\sfTJ_{#1}(#2)}
\newcommand{\KTS}{\mathcal{K}^{\sfTS}}
\newcommand{\KTJ}{\mathcal{K}^{\sfTJ}}
\newcommand{\Int}{\mathsf{Int}}
\newcommand{\CP}[1]{\text{CP}_{#1}} 

\title{\textbf{On Realizing Reconfiguration Graphs of Cliques}}
\author{Duc~A.~Hoang$^1$}

\date{
	$^1$VNU University of Science,
	Vietnam National University,
	Hanoi, 
	Vietnam \\ \email{hoanganhduc@hus.edu.vn}
	\\[2ex]%
}

\newif\ifsubmission
\submissiontrue

\begin{document}
\maketitle

\tableofcontents

\begin{abstract}
	For a graph $H$ and an integer $k\ge 1$, the \emph{Token Sliding reconfiguration graph} $\mathsf{TS}_k(H)$ and the \emph{Token Jumping reconfiguration graph} $\mathsf{TJ}_k(H)$ have as vertices the $k$-cliques of $H$, with two vertices adjacent when one clique is obtained from the other by replacing one vertex with an adjacent non-member, and respectively by an arbitrary non-member while keeping a clique. For a target graph $G$, we study the feasibility sets $\mathcal{K}^{\mathsf{TS}}(G)$ and $\mathcal{K}^{\mathsf{TJ}}(G)$, consisting of all integers $k$ for which $G$ is isomorphic to $\mathsf{TS}_k(H)$ and $\mathsf{TJ}_k(H)$, respectively, for some graph $H$. We determine the exact feasibility sets for complete graphs, paths, cycles, complete bipartite graphs, book graphs, friendship graphs, and their complements, and give complete classifications for all nontrivial Johnson graphs.

	\noindent\textbf{Keywords:} Reconfiguration graph, Graph realization, Clique, Token sliding, Token jumping, Johnson graph
\end{abstract}

\section{Introduction}
\label{sec:intro}

Recently, \emph{combinatorial reconfiguration} has attracted a lot of attention in the theoretical computer science communities; see, for example, \cite{Heuvel13,Nishimura18,MynhardtN19,BousquetMNS24} for the well-known surveys in this area. In a reconfiguration setting of a \emph{source} problem $\Pi$ (e.g., \textsc{Independent Set}, \textsc{Clique}, \textsc{Dominating Set}, etc.), a description of a solution to $\Pi$ is called a \emph{configuration}, and two configurations are \emph{adjacent} if one can be obtained from the other by applying a specified \emph{reconfiguration rule} (e.g., token sliding, token jumping, vertex recoloring, etc.). The \emph{reconfiguration graph} of $\Pi$ on an instance $I$ is the graph whose vertices are the configurations of $I$, with two vertices adjacent when the corresponding configurations are adjacent. 

A \emph{clique} of a graph $G$ is a set of pairwise adjacent vertices of $G$.
In this paper, we focus on the source problem \textsc{Clique} and study the reconfiguration graphs of cliques under the \emph{token sliding} ($\sfTS$) and \emph{token jumping} ($\sfTJ$) rules. 
More precisely, for a given target graph $G$, we study the problem of determining for which integers $k$ there exists a graph $H$ whose corresponding Token Sliding/Token Jumping reconfiguration graph is isomorphic to $G$, and respectively denote by $\mathcal{K}^{\mathsf{TS}}(G)$ and $\mathcal{K}^{\mathsf{TJ}}(G)$ the sets of all such integers.
For related background on reconfiguration of colourings and dominating sets, see \cite{MynhardtN19}; for realization problems for reconfiguration graphs of independent sets, see \cite{AvisH23,AvisH24}.

Relevant references on clique reconfiguration include the work of Ito, Ono, and Otachi~\cite{ItoOO23} and Lam, Phan, and Hoang~\cite{LamPH26}. In particular, we use from \cite[Lem.~1 and Thm.~1]{LamPH26} a clique-number formula and a clique-structure lemma for $\TS{k}{H}$; see \cref{sec:TS-structural}. Here we take a realization viewpoint: for a prescribed target graph $G$, determine the integers $k$ for which $G\cong \TS{k}{H}$ or $G\cong \TJ{k}{H}$ for some graph $H$. We determine these feasibility sets for several natural target families. Our main results are as follows.
\begin{itemize}
\item We determine the exact TS feasibility sets for complete graphs, paths, cycles, complete bipartite graphs, book graphs, friendship graphs, and their complements (\cref{thm:warmup-ts,thm:camera-ts-basic-complements}).
\item We determine the exact TJ feasibility sets for the same basic families and their complements (\cref{thm:warmup-tj,thm:camera-tj-basic-complements}).
\item For nontrivial Johnson graphs, we determine both feasibility sets $\KTS(J(n,r))$ and $\KTJ(J(n,r))$ (\cref{thm:camera-ts-johnson,thm:camera-tj-johnson}).
\end{itemize}

The rest of this paper is organized as follows. In \cref{sec:prelim}, we introduce the notation used throughout the paper. \cref{sec:TS} is devoted to the Token Sliding side, and \cref{sec:TJ} is devoted to the Token Jumping side.

\section{Preliminaries}
\label{sec:prelim}

In this section, we introduce some notations and definitions that will be used throughout the paper. 
For graph notations and terminologies not defined here, we refer the reader to \cite{Diestel2017}. 
All graphs in this paper are finite, simple, and undirected.
We use $V(G)$ and $E(G)$ to denote the sets of vertices and edges of a graph $G$, respectively.
The neighborhood of a vertex $v$ in $G$, denoted by $N_G(v)$, is the set $\{w \in V(G) : vw \in E(G)\}$.
The closed neighborhood of $v$ in $G$, denoted by $N_G[v]$, is simply the set $N_G(v) \cup \{v\}$.
By abuse of notation, we sometimes also write $N_G(v)$ for the subgraph induced by the neighborhood of $v$ when no confusion can arise.
We also denote by $\deg_G(v)$ the \emph{degree} of $v$ in $G$, which is the size of $N_G(v)$.
The subscripts are often omitted when the graph is clear from the context.
For a set $X$ and an element $x$, we write $X+x$ for $X\cup\{x\}$ when $x\notin X$, and $X-x$ for $X\setminus\{x\}$ when $x\in X$.
More generally, if $I$ is a finite index set and the elements $x_i$ with $i\in I$ are pairwise distinct and outside $X$, then
\[
X+\sum_{i\in I}x_i:=X\cup\{x_i:i\in I\}.
\]

For a graph $G$, its \emph{line graph} is denoted by $L(G)$ and its \emph{complement} by $\overline{G}$.
If $G$ and $H$ are disjoint graphs, then $G\vee H$ denotes their \emph{join} and $G\sqcup H$ denotes their \emph{disjoint union}.
For a positive integer $a$, the notation $aG$ denotes the disjoint union of $a$ copies of $G$.
We write $G\cong H$ to indicate that $G$ and $H$ are \emph{isomorphic}, that is, there exists a bijection $f:V(G)\to V(H)$ such that $uv\in E(G)$ if and only if $f(u)f(v)\in E(H)$ for every $u,v\in V(G)$.
We write $\omega(G)$ for the clique number of $G$ and $\Delta(G)$ for the maximum degree of $G$.
We write $K_n$ for the \emph{complete graph} on $n$ vertices, $P_n$ for the \emph{path} on $n$ vertices, $C_n$ for the \emph{cycle} on $n$ vertices, and $K_{m,n}$ for the \emph{complete bipartite graph} with bipartition classes of sizes $m$ and $n$.
When these families overlap, we use the conventions $P_1=K_1$, $C_3=K_3$, $K_{1,1}=P_2$, $K_{1,2}=P_3$, and $K_{2,2}=C_4$.
When the vertices of a path or cycle are named by numbers, we use the natural path order and the natural cyclic order, respectively.
In particular, $K_{1,n}$ is the \emph{star} on $n+1$ vertices.
A \emph{book graph} $B_p$ ($p\ge 1$) consists of $p$ triangles $K_3$ sharing a common edge.
A \emph{friendship graph} $F_p$ ($p\ge 1$) consists of $p$ triangles sharing a common vertex.
The \emph{cocktail-party graph} $\CP{p}$ ($p\ge 1$) is the complete $p$-partite graph with all partite sets of size $2$, that is, $\CP{p}=K_{2,\dots,2}$.
The \emph{Johnson graph} $J(n,k)$ has as vertices the $k$-subsets of $[n]=\{1,\dots,n\}$, with two vertices adjacent when their intersection has size $k-1$. 
For every $(k-1)$-subset $S$ of $[n]$, the set of all $k$-subsets of $[n]$ that contain $S$ induces a clique in $J(n,k)$; we call this clique the \emph{Johnson star} with core $S$. 
For every $(k+1)$-subset $U$ of $[n]$, the set of all $k$-subsets of $U$ induces a clique in $J(n,k)$; we call this clique the \emph{Johnson top clique} with top $U$.
For a finite set $X$ and an integer $s\ge 0$, we write $\displaystyle \binom{X}{s}$ for the family of all $s$-element subsets of $X$. We call $J(n,r)$ \emph{nontrivial} when $n\ge 2$ and $1\le r\le n-1$. For every $n$ and $1\le r\le n-1$, the complement map $A\mapsto [n]\setminus A$ induces an isomorphism $J(n,r)\cong J(n,n-r)$; we call this isomorphism the \emph{Johnson duality}.
We use the convention that the empty set is the unique $0$-clique of every graph.

Let $H$ be a graph and let $k\ge 1$.
The \emph{Token Sliding (reconfiguration) graph} $\TS{k}{H}$ is the graph whose vertices are the $k$-cliques of $H$; two vertices $A$ and $B$ are adjacent when there exist vertices $u\in A\setminus B$ and $v\in B\setminus A$ such that $A\setminus B=\{u\}$, $B\setminus A=\{v\}$, and $uv\in E(H)$.
The \emph{Token Jumping (reconfiguration) graph} $\TJ{k}{H}$ is defined on the same vertex set, but $A$ and $B$ are adjacent whenever $|A\cap B|=k-1$.
Thus, $\TS{k}{H}$ is indeed a spanning subgraph of $\TJ{k}{H}$.
Since the $k$-cliques of $K_n$ are exactly the $k$-subsets of $V(K_n)$, one can verify that $\TS{k}{K_n}\cong J(n,k)$ and $\TJ{k}{K_n}\cong J(n,k)$ for $n\ge k\ge 1$.
For a target graph $G$, we define $\KTS(G)=\{k\ge 1: \exists H,\ \TS{k}{H}\cong G\}$ and $\KTJ(G)=\{k\ge 1: \exists H,\ \TJ{k}{H}\cong G\}$.
The difference between token sliding and token jumping is illustrated in \cref{fig:ts-tj-complete-noncomplete}, and the two Johnson clique families are shown in \cref{fig:johnson-star-top}.

\begin{figure}[ht]
\centering
\begin{adjustbox}{max width=\textwidth}
\begin{tikzpicture}[x=1cm,y=1cm]
\tikzset{
  grid/.style={draw=black!50, line width=.45pt},
  head/.style={font=\bfseries\small, align=center},
  celltext/.style={font=\small, align=center},
  v/.style={circle, draw=black!70, fill=white, minimum size=18pt, inner sep=1pt, font=\scriptsize},
  gv/.style={circle, draw=black!70, fill=black!4, minimum size=17pt, inner sep=1pt, font=\scriptsize},
  e/.style={draw=black!70, line width=.55pt}
}

\foreach \x in {0,2.0,4.9,8.3,11.7} {
  \draw[grid] (\x,0) -- (\x,5.3);
}
\foreach \y in {0,2.15,4.3,5.3} {
  \draw[grid] (0,\y) -- (11.7,\y);
}

\node[head] at (1.0,4.8) {case};
\node[head] at (3.45,4.8) {source $H$};
\node[head] at (6.6,4.8) {$\mathsf{TS}_{2}(H)$};
\node[head] at (10.0,4.8) {$\mathsf{TJ}_{2}(H)$};

\node[celltext] at (1.0,3.23) {complete};
\node[celltext] at (1.0,1.08) {non-complete};

\node[gv] (k1) at (2.8,3.0) {$1$};
\node[gv] (k2) at (3.45,3.65) {$2$};
\node[gv] (k3) at (4.1,3.0) {$3$};
\draw[e] (k1) -- (k2) -- (k3) -- (k1);
\node[celltext] at (3.45,2.45) {$K_3$};

\foreach \prefix/\xcenter in {cts/6.6, ctj/10.0} {
  \node[v] (\prefix 12) at (\xcenter-0.65,3.0) {$12$};
  \node[v] (\prefix 13) at (\xcenter,3.65) {$13$};
  \node[v] (\prefix 23) at (\xcenter+0.65,3.0) {$23$};
  \draw[e] (\prefix 12) -- (\prefix 13) -- (\prefix 23) -- (\prefix 12);
}

\node[gv] (p1) at (2.65,1.08) {$1$};
\node[gv] (p2) at (3.45,1.08) {$2$};
\node[gv] (p3) at (4.25,1.08) {$3$};
\draw[e] (p1) -- (p2) -- (p3);
\node[celltext] at (3.45,0.50) {$P_3$};

\node[v] (pts12) at (6.15,1.08) {$12$};
\node[v] (pts23) at (7.05,1.08) {$23$};

\node[v] (ptj12) at (9.55,1.08) {$12$};
\node[v] (ptj23) at (10.45,1.08) {$23$};
\draw[e] (ptj12) -- (ptj23);

\end{tikzpicture}
\end{adjustbox}
\caption{Token sliding and token jumping agree on the complete example $K_3$, but differ on the non-complete example $P_3$. The vertices inside the reconfiguration graphs are the $2$-cliques of the source graph.}
\label{fig:ts-tj-complete-noncomplete}
\end{figure}
\begin{figure}[ht]
\centering
\begin{adjustbox}{max width=\textwidth}
\begin{tikzpicture}[x=1cm,y=1cm]
\tikzset{
  title/.style={font=\bfseries\small, align=center},
  note/.style={font=\small, align=center},
  elem/.style={circle, draw=black!65, fill=white, minimum size=20pt, inner sep=1pt, font=\scriptsize},
  markedElem/.style={elem, draw=black, fill=black!12, line width=.7pt},
  jv/.style={circle, draw=black!70, fill=white, minimum size=21pt, inner sep=1pt, font=\scriptsize},
  markedv/.style={jv, draw=black, fill=black!6, line width=.6pt},
  e/.style={draw=black!55, line width=.5pt}
}

\node[title] at (2.9,3.85) {Johnson star in $J(5,2)$};
\node[title] at (9.9,3.85) {Johnson top in $J(5,2)$};

\node[markedElem] at (1.3,3.15) {$1$};
\node[elem] at (2.1,3.15) {$2$};
\node[elem] at (2.9,3.15) {$3$};
\node[elem] at (3.7,3.15) {$4$};
\node[elem] at (4.5,3.15) {$5$};
\node[note] at (2.9,2.35) {$[5]=\{1,2,3,4,5\}$,\quad core $\{1\}$};

\node[markedv] (s12) at (2.25,1.45) {$12$};
\node[markedv] (s13) at (3.55,1.45) {$13$};
\node[markedv] (s14) at (2.25,0.35) {$14$};
\node[markedv] (s15) at (3.55,0.35) {$15$};
\foreach \a/\b in {s12/s13,s12/s14,s12/s15,s13/s14,s13/s15,s14/s15} {
  \draw[e] (\a) -- (\b);
}

\node[markedElem] at (8.3,3.15) {$1$};
\node[markedElem] at (9.1,3.15) {$2$};
\node[markedElem] at (9.9,3.15) {$3$};
\node[elem] at (10.7,3.15) {$4$};
\node[elem] at (11.5,3.15) {$5$};
\node[note] at (9.9,2.35) {$[5]=\{1,2,3,4,5\}$,\quad top $\{1,2,3\}$};

\node[markedv] (t12) at (9.9,1.45) {$12$};
\node[markedv] (t13) at (9.2,0.35) {$13$};
\node[markedv] (t23) at (10.6,0.35) {$23$};
\foreach \a/\b in {t12/t13,t12/t23,t13/t23} {
  \draw[e] (\a) -- (\b);
}

\end{tikzpicture}
\end{adjustbox}
\caption{The two standard clique families in $J(5,2)$. A Johnson star fixes a common $(k-1)$-subset, while a Johnson top consists of all $k$-subsets of a common $(k+1)$-set.}
\label{fig:johnson-star-top}
\end{figure}

\section{Token Sliding}
\label{sec:TS}

\subsection{Some structural properties of \texorpdfstring{$\TS{k}{H}$}{TS\_k(H)}}
\label{sec:TS-structural}

We use the following structural properties of $\TS{k}{H}$ from \cite[Lem.~1 and Thm.~1]{LamPH26}.

\begin{lemma}[{\cite[Lem.~1]{LamPH26}}]\label{lem:clique-structure}
Let $H$ be a graph, let $k\ge 1$, and suppose that $\TS{k}{H}$ contains a clique $Q$ of size $q\ge 3$ with vertex set $A_1,\dots,A_q$.
Then one of the following holds.
\begin{enumerate}[(1)]
\item There exists a $(k+1)$-clique $U$ in $H$ such that each $A_i$ has the form $U-u_i$, where $u_1,\dots,u_q$ are pairwise distinct vertices of $U$.
In particular, $q\le k+1$.
\item There exists a $(k-1)$-clique $\Int$ in $H$ such that each $A_i$ has the form $\Int+a_i$, where $a_1,\dots,a_q\notin \Int$ are pairwise distinct.
\end{enumerate}
In particular, when $q>k+1$ only the second case can occur.
\end{lemma}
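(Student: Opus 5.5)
The proof is a sunflower-type argument on the $k$-sets $A_1,\dots,A_q$, using only the fact that adjacency in $\TS{k}{H}$ forces $|A_i\cap A_j|=k-1$. I will work with this set-theoretic condition throughout and use the clique property of $H$ only at the end, to certify that the sets $U$ and $\Int$ are cliques.

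The first step handles three vertices. Take $A_1,A_2,A_3$ and set $I=A_1\cap A_2$, so $|I|=k-1$, $A_1=I+x_1$ and $A_2=I+x_2$ with $x_1\neq x_2$. Since $|A_3\cap A_1|=k-1$ and $|A_3\cap A_2|=k-1$, there are two possibilities.
\begin{itemize}
\item If $I\subseteq A_3$, then $A_3=I+x_3$ with $x_3\notin\{x_1,x_2\}$. This is the ``star'' configuration.
\item Otherwise $A_3$ misses some element of $I$. Then $A_3$ must contain both $x_1$ and $x_2$: if it missed $x_1$, then $A_3\cap A_1\subseteq I$, and the intersection would be too small. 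Counting then gives $A_3=(I-y)+x_1+x_2$ for some $y\in I$. So $A_1,A_2,A_3$ are all $k$-subsets of $U=I+x_1+x_2$, which has size $k+1$. This is the ``top'' configuration.
\end{itemize}

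The second step extends to all $q$ sets. Suppose first that some triple is in top form with $(k+1)$-set $U$. Take any further $A_j$; it meets each of the three $k$-subsets of $U$ in $k-1$ elements. I claim $A_j\subseteq U$. If not, then $|A_j\cap U|\le k-1$. Since $A_j$ meets each of the three sets (each of which lies in $U$) in $k-1$ elements, we get $A_j\cap U=A_i\cap A_j$ for each $i$ in the triple. Hence $A_j\cap U$ is contained in the intersection of the three sets. That intersection has size $k-2$, because the three sets are $U$ minus three distinct points. This contradicts $|A_j\cap U|=k-1$. So every $A_j=U-u_j$ with the $u_j$ distinct, and therefore $q\le k+1$.

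Otherwise every triple is in star form. Then every $A_j$ contains $I=A_1\cap A_2$, as seen by applying the first step to the triple $A_1,A_2,A_j$. This gives case (2) with $\Int=I$ and distinct $a_j$.

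The final step checks the clique conditions in $H$. In the star case, $\Int\subseteq A_1$, which is a clique, so $\Int$ is a $(k-1)$-clique. In the top case I need $U$ to be a clique in $H$. Any two vertices of $U$ lie together in some $A_i=U-u_i$ as long as they avoid some $u_i$; with $q\ge 3$ distinct $u_i$, any pair of vertices of $U$ avoids at least one $u_i$. Hence every pair of vertices of $U$ is covered by a clique $A_i$, so $U$ is a clique.

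The last claim of the lemma follows at once: $q>k+1$ rules out case (1), because case (1) forces $q\le k+1$.

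I expect the only delicate step to be the intersection count showing that a top configuration forces every further $A_j$ into $U$. The rest is routine.
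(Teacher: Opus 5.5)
Your proof is correct. It uses only $|A_i\cap A_j|=k-1$, so it actually establishes the token-jumping dichotomy of \cref{thm:camera-TJ-clique-structure}. That theorem is what covers this lemma inside the paper: the lemma itself is only cited from \cite{LamPH26}, and it follows because $\TS{k}{H}$ is a spanning subgraph of $\TJ{k}{H}$.

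The approach is essentially the paper's. Both classify triples into star and top form. The paper then extends by induction on the clique size, testing each new set against $A_1,A_2$ and then $A_3$. You instead make one global split and place every $A_j$ inside $U$ by a direct intersection count, which is a slightly streamlined version of the same step.
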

In the second case of \cref{lem:clique-structure}, the set $\Int\cup\{a_1,\dots,a_q\}$ is a clique in $H$: each $A_i$ is a clique, and the adjacency of $A_i$ and $A_j$ in $\TS{k}{H}$ gives $a_i a_j\in E(H)$ for every $i\ne j$.

\begin{lemma}[{\cite[Thm.~1]{LamPH26}}]\label{lem:clique-number-formula}
Let $H$ be a graph and let $k\ge 1$.
\begin{enumerate}[(1)]
\item If $k>\omega(H)$, then $\TS{k}{H}$ has no vertices.
\item If $k=\omega(H)$, then $\TS{k}{H}$ has no edges.
\item If $k<\omega(H)$, then $\omega(\TS{k}{H})=\max\{k+1,\,\omega(H)-k+1\}$.
\end{enumerate}
\end{lemma}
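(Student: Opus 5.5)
We prove the three parts of \cref{lem:clique-number-formula} in order. Parts (1) and (2) are immediate. If $k>\omega(H)$, then $H$ has no $k$-clique, so $\TS{k}{H}$ has no vertices. If $k=\omega(H)$, suppose two $k$-cliques $A,B$ were adjacent, with $A\setminus B=\{u\}$, $B\setminus A=\{v\}$ and $uv\in E(H)$. Then $A\cup B=A+v$ would be a $(k+1)$-clique, since $v$ is adjacent to every vertex of $A\cap B$ (as $B$ is a clique) and to $u$. This contradicts $k=\omega(H)$.

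For (3), assume $k<\omega(H)$ and fix a maximum clique $W$ of $H$, so $|W|=\omega:=\omega(H)\ge k+1$. We first give the lower bound with two explicit cliques of $\TS{k}{H}$.
\begin{itemize}
\item \emph{Top-type clique.} Choose a $(k+1)$-subset $U\subseteq W$. The $k+1$ sets $U-u$ with $u\in U$ are $k$-cliques. Any two of them, $U-u$ and $U-u'$, differ by swapping $u'$ and $u$, and $uu'\in E(H)$, so they are pairwise adjacent. This gives a clique of size $k+1$.
\item \emph{Star-type clique.} Choose a $(k-1)$-subset $S\subseteq W$. The $\omega-k+1$ sets $S+a$ with $a\in W\setminus S$ are pairwise adjacent $k$-cliques. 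This gives a clique of size $\omega-k+1$.
\end{itemize}
Hence $\omega(\TS{k}{H})\ge\max\{k+1,\omega-k+1\}$.

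For the upper bound, take a clique $Q$ of $\TS{k}{H}$ of size $q$. If $q\le 2$, the bound holds trivially, since $k+1\ge 2$. If $q\ge 3$, apply \cref{lem:clique-structure}.
\begin{itemize}
\item In case (1), we directly get $q\le k+1$.
\item In case (2), the remark after \cref{lem:clique-structure} shows that $\Int\cup\{a_1,\dots,a_q\}$ is a clique of $H$ of size $k-1+q$. Therefore $k-1+q\le\omega$, that is, $q\le\omega-k+1$.
\end{itemize}
In either case $q\le\max\{k+1,\omega-k+1\}$, which finishes the proof.

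The only nontrivial ingredient is the dichotomy of \cref{lem:clique-structure}, which is assumed here. The main care point is the small case $q\le 2$, which that lemma does not cover and which the trivial bound handles. The boundary case $k=1$ is also consistent: there $\Int=\emptyset$, and $\TS{1}{H}\cong H$ has clique number $\omega=\max\{2,\omega\}$ because $\omega\ge 2$.
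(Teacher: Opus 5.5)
Your proof is correct. The paper gives no argument of its own for this lemma: it is imported verbatim from \cite[Thm.~1]{LamPH26}, together with \cref{lem:clique-structure}, so there is no internal proof to compare against.

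Your write-up shows that part~(3) follows directly from \cref{lem:clique-structure}. The lower bound comes from two explicit cliques inside a maximum clique $W$ of $H$: the top clique $\{U-u:u\in U\}$ and the star clique $\{S+a:a\in W\setminus S\}$. The upper bound comes from the dichotomy, where the star case produces the $(k-1+q)$-clique $\Int\cup\{a_1,\dots,a_q\}$ of $H$. You were right to treat $q\le 2$ separately, since the dichotomy needs $q\ge 3$.

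If you want the argument to rest only on results proved in this paper, you can replace the appeal to \cref{lem:clique-structure} by \cref{thm:camera-TJ-clique-structure}. Since $\TS{k}{H}$ is a spanning subgraph of $\TJ{k}{H}$, every clique of $\TS{k}{H}$ is a clique of $\TJ{k}{H}$. The top case then gives $q\le k+1$ directly. In the star case, TS-adjacency of $S+x_i$ and $S+x_j$ forces $x_ix_j\in E(H)$, so $S\cup\{x_1,\dots,x_q\}$ is a clique of size $k-1+q$, exactly as you need. The proof of that theorem uses no TS results, so there is no circularity.
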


\subsection{Basic Graph Families and Their Complements}
\label{sec:TS-basic}

We first determine the exact sets $\KTS(G)$ for several basic graph families $G$.
\begin{theorem}\label{thm:warmup-ts}
The following exact formulas hold.
\begin{enumerate}[(1)]
\item For complete graphs,
\[
\KTS(K_n)=
\begin{cases}
\{k:k\ge 1\}, & n=1,\\
\{1\}, & n=2,\\
\{1,n-1\}, & n\ge 3.
\end{cases}
\]
\item For paths, cycles, and complete bipartite graphs,
\begin{align*}
\KTS(P_n) &= \{1\}\quad (n\ge 2),\\
\KTS(C_n) &= \{1\}\quad (n\ge 4),\\
\KTS(K_{m,n}) &= \{1\}\quad (m,n\ge 1).
\end{align*}
In particular, $\KTS(K_{1,n})=\{1\}$ for every $n\ge 1$.
\item For book graphs,
\[
\KTS(B_p)=
\begin{cases}
\{1,2\}, & p=1,\\
\{1\}, & p\ge 2.
\end{cases}
\]
\item For friendship graphs,
\[
\KTS(F_p)=\{1,2\}\qquad (p\ge 1).
\]
\end{enumerate}
\end{theorem}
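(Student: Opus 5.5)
The plan has two halves: realizations showing membership, and exclusions via \cref{lem:clique-number-formula} and \cref{lem:clique-structure}.

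\textbf{Realizations.} First, $k=1$ always works, since $\TS{1}{H}\cong H$; take $H=G$. For $K_1$ and any $k$, take $H=K_k$, which has exactly one $k$-clique. For $K_n$ with $n\ge 3$, take $H=K_n$ and $k=n-1$, so that $\TS{n-1}{K_n}\cong J(n,n-1)\cong K_n$. For $B_1$ (the diamond $K_4$ minus an edge) with $k=2$, take $H=K_4$ minus an edge on $\{1,2,3,4\}$ with $34\notin E(H)$. Its $2$-cliques are the five edges $12,13,14,23,24$. Their $\mathsf{TS}$-adjacencies are $12\sim13,14,23,24$, $13\sim14$ (via $34$? no, not an edge), $13\sim23$, and $14\sim24$. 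I will check this carefully, or instead choose $H$ so that the result is $B_1$. For $F_p$ with $k=2$, take $H=K_{1,2p}$ plus a perfect matching on the leaves, i.e.\ $H=F_p$ itself. Its $2$-cliques are the $2p$ spokes and the $p$ rim edges. The spokes pairwise slide through the centre and form a $K_{2p}$, which is too many, so instead I will try $H=K_1\vee pK_2$ alternatives. The cleanest candidate is $H$ consisting of $p$ triangles $\{c,x_i,y_i\}$ with $k=2$, restricting which $2$-cliques exist. Finding the right $H$ for $F_p$ and $B_1$ at $k=2$ is a small search I will do by hand.

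\textbf{Exclusions.} Suppose $G\cong\TS{k}{H}$ with $k\ge2$ and $G$ has at least one edge. By \cref{lem:clique-number-formula}, $k<\omega(H)$, so $\omega(G)\ge k+1\ge3$. This immediately excludes $k\ge2$ for triangle-free targets: $P_n$ ($n\ge2$), $C_n$ ($n\ge4$), and $K_{m,n}$. It also excludes $K_2$. For $B_p$ and $F_p$ we have $\omega=3$, so $k=2$ and $\omega(H)=3$ or $4$ (since $\max\{3,\omega(H)-1\}=3$). For $K_n$ with $n\ge3$, we get $n=\max\{k+1,\omega(H)-k+1\}$. If $n=k+1$ we are done. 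Otherwise $n=\omega(H)-k+1>k+1$, and \cref{lem:clique-structure} forces the whole vertex set into case (2), a star with core $\Int$, so that all $k$-cliques contain a common $(k-1)$-clique $\Int$. In that case $n=\omega(H)-k+1$ means $H$ has a clique $W\supseteq \Int$ of size $n+k-1$. The $k$-subsets of $W$ not containing $\Int$ are also vertices of $G$ whenever $k\ge2$ and $|W|\ge k+1$, which is a contradiction. Hence $\KTS(K_n)=\{1,n-1\}$.

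\textbf{Main obstacle: $B_p$ with $p\ge2$.} The edge $e=\{A,B\}$ lies in $p\ge2$ triangles, so for $k=2$, $G$ contains $K_4$ minus edges. Each triangle $\{A,B,C_i\}$ is either a top (the union is a $3$-clique of $H$) or a star (a common vertex). Since $A=\{a,x\}$ and $B=\{a,y\}$ with $xy\in E$, both $A$ and $B$ lie in the top $\{a,x,y\}$, which contributes at most one $C_i$, namely $\{x,y\}$. The other triangles must be stars through $a$, giving $C_i=\{a,z_i\}$ with $z_i$ adjacent to $x$ and $y$. Then $\{a,x,y,z_i\}$ is a $K_4$ in $H$, which creates the extra vertices $\{x,z_i\}$, $\{y,z_i\}$ and extra edges beyond $B_p$. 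I expect this local argument, together with the $C_4$ and $P_n$ bookkeeping, to close the case; the fiddly part is enumerating the $\le4$ vertices in each case.
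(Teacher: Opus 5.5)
Your exclusion half is sound and mostly follows the paper. The clique-number formula rules out every $k\ge 2$ for the triangle-free targets and every $k\ge 3$ for $B_p$ and $F_p$. For $K_n$ with $n>k+1$, you use \cref{lem:clique-structure} and exhibit extra $k$-subsets of the $(n+k-1)$-clique, as the paper does.

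The genuine gap is in the realizations: you never exhibit a witness for $2\in\KTS(F_p)$, and both candidates you name fail. The graph ``$p$ triangles $\{c,x_i,y_i\}$'' is $F_p$ itself, and $\TS{2}{F_p}\cong pK_3$. Two spokes $cx_i,cx_j$ are $\mathsf{TS}$-adjacent only when $x_ix_j\in E(H)$, so your claim that the spokes form a $K_{2p}$ uses token-jumping adjacency, not token sliding. The missing witness is $H=B_p$, as in the paper. The spine edge $uv$ becomes the center, each page $uvw_i$ yields the triangle $\{uv,uw_i,vw_i\}$, and edges from different pages are not $\mathsf{TS}$-adjacent because $w_iw_j\notin E(B_p)$. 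Your own partial computation of $\TS{2}{K_4-e}$, which gave two triangles sharing the vertex $12$, is exactly this witness for $p=2$.

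You have also misread the book graph. In the paper, $B_p$ is $p$ triangles sharing a common edge, so $B_1=K_3$. Its $k=2$ witness is just $\TS{2}{K_3}\cong J(3,2)\cong K_3$ from your complete-graph case. The diamond $K_4-e$ is $B_2$, and $2\notin\KTS(B_2)$; your own ``main obstacle'' argument with $p=2$ excludes it. So the hand search you plan for a $k=2$ realization of the diamond cannot succeed. Under your reading, the claimed value $\KTS(B_1)=\{1,2\}$ would be false.

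Your exclusion of $k=2$ for $B_p$, $p\ge 2$, takes a different route from the paper, and it does work. However, you should close it explicitly rather than appeal to ``extra edges''. Once some $C_i=\{a,z_i\}$ is of star type, $\{a,x,y,z_i\}$ is a $K_4$ in $H$. Then $C_i$ has the four $\mathsf{TS}$-neighbors $\{a,x\},\{a,y\},\{x,z_i\},\{y,z_i\}$. But $C_i$ is a common neighbor of $u$ and $v$ in $B_p$, hence a page vertex of degree $2$, a contradiction.

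The paper instead notes that an edge of $H$ has $\mathsf{TS}_2$-degree twice the number of triangles containing it. So each page vertex is an edge lying in a unique triangle, necessarily the one determined by the edges for $u$ and $v$, which forces all pages to coincide. The paper's count is shorter and avoids the top/star case split; yours is more local but relies on the triangle-type dichotomy from \cref{lem:clique-structure}.
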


\begin{proof}
\begin{enumerate}[(1)]
\item For complete graphs, $\TS{1}{K_n}\cong K_n$, so $1$ is always feasible.
For $n\ge 2$, also $\TS{n-1}{K_n}\cong J(n,n-1)\cong K_n$.
If $n=1$, then $\TS{k}{K_k}\cong K_1$ for every $k\ge 1$.
Thus, the displayed TS values are feasible.

Conversely, suppose there is a graph $H$ such that $K_n\cong \TS{k}{H}$ with $k\ge 2$, $n\ge 2$, and $n\neq k+1$.
Since $K_n$ has an edge, \cref{lem:clique-number-formula} gives $n=\omega(K_n)=\omega(\TS{k}{H})\ge k+1$.
Because $n\neq k+1$, we have $n>k+1$.
Applying \cref{lem:clique-structure} to the $n$ pairwise adjacent vertices of $K_n$, only the second case can occur.
Hence, there exists a $(k-1)$-clique $\Int$ and pairwise distinct vertices $a_1,\dots,a_n$ such that all vertices of $K_n$ are the $k$-cliques $\Int+a_i$, and $C:=\Int\cup\{a_1,\dots,a_n\}$ is a clique of size $k-1+n$ in $H$.
Every $k$-subset of $C$ is therefore a $k$-clique of $H$, so
\[
|V(\TS{k}{H})|\ge \binom{k-1+n}{k}>n,
\]
a contradiction.
This argument proves the complete-graph formula.

\item For paths, cycles, and complete bipartite graphs, each target has at least one edge and clique number at most $2$.
Thus, if $k\ge 2$ and $G\cong \TS{k}{H}$, then \cref{lem:clique-number-formula} gives $\omega(G)=\omega(\TS{k}{H})\ge k+1\ge 3$, impossible.
So only $k=1$ remains, and $\TS{1}{G}\cong G$ for every graph $G$.
This argument proves the formulas for $P_n$, $C_n$, and $K_{m,n}$, including stars $K_{1,n}$.

\item For book graphs, the case $p=1$ is $B_1=K_3$, already covered.
Assume now that $p\ge 2$.
The value $1$ is feasible because $\TS{1}{B_p}\cong B_p$.
Since $\omega(B_p)=3$, the same clique-number obstruction excludes every $k\ge 3$.
It remains to exclude $k=2$.
Suppose $B_p\cong \TS{2}{H}$.
Write the vertices of $B_p$ as $u,v,w_1,\dots,w_p$, where $u$ and $v$ are the two vertices of degree $p+1$ and each $w_i$ has degree $2$.
For any edge $e$ of $H$, each triangle containing $e$ contributes exactly two neighbors of $e$ in $\TS{2}{H}$, so the edge corresponding to each $w_i$ lies in exactly one triangle of $H$.
Since $w_i$ is adjacent to both $u$ and $v$, that unique triangle must contain the two edges corresponding to $u$ and $v$.
But two adjacent edges of a graph determine at most one triangle, so all $w_i$ would correspond to the same third edge of that triangle, a contradiction.
Hence, $2\notin\KTS(B_p)$ for $p\ge 2$.

\item For friendship graphs, the case $p=1$ is again $K_3$.
Assume now that $p\ge 2$.
The value $1$ is feasible because $\TS{1}{F_p}\cong F_p$.
Also $2$ is feasible because $\TS{2}{B_p}\cong F_p$: the vertices of $\TS{2}{B_p}$ are the edges of $B_p$, and inside each triangle of the book the three edges form a triangle in $\TS{2}{B_p}$, while edges from different pages do not create extra adjacencies.
Thus, the $p$ page-triangles of $B_p$ become $p$ triangles sharing the common vertex corresponding to the edge $uv$ of $B_p$, which is exactly the friendship graph $F_p$.
Finally, since $\omega(F_p)=3$, the clique-number obstruction excludes every $k\ge 3$.
Therefore, $\KTS(F_p)=\{1,2\}$ for all $p\ge 1$.
\end{enumerate}
\end{proof}

We also record the corresponding complement families of the above basic graph families.
Before going to the main results, we need the following observations.

The next proposition handles disjoint unions of cliques.
\begin{proposition}\label{prop:camera-complement-clique-unions-TS}
Let
\[
G=\bigsqcup_{i=1}^t K_{n_i}
\qquad (t\ge 1,\ n_i\ge 1).
\]
Then
\[
\KTS(G)=\{1\}\cup \{\,k\ge 2: n_i\in\{1,k+1\}\text{ for every }i\,\}.
\]
\end{proposition}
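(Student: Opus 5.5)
We prove the two inclusions separately. Since $\TS{1}{G}\cong G$ for every graph $G$, the value $1$ always lies in $\KTS(G)$. For the lower bound, fix $k\ge 2$ with every $n_i\in\{1,k+1\}$. Take $H$ to be the disjoint union of one copy of $K_{k+1}$ for each $i$ with $n_i=k+1$ and one copy of $K_k$ for each $i$ with $n_i=1$. Every $k$-clique of $H$ lies in a single component, and two $k$-cliques in different components share at most $k-2<k-1$ vertices, so they are not adjacent. Hence $\TS{k}{H}$ is the disjoint union of the graphs $\TS{k}{K_{k+1}}\cong J(k+1,k)\cong K_{k+1}$ and $\TS{k}{K_k}\cong K_1$. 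This is exactly $G$.

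For the upper bound, suppose $k\ge 2$ and $G\cong\TS{k}{H}$. We must show that every component $K_{n_i}$ with $n_i\ge 2$ has $n_i=k+1$. If $n_i=2$, fix the edge $A\sim B$, say $A=S+u$ and $B=S+v$ with $uv\in E(H)$ and $|S|=k-1\ge 1$. Then $S+u+v$ is a $(k+1)$-clique of $H$. For any $s\in S$, the set $S-s+u+v$ is a $k$-clique adjacent to $A$, since it is obtained from $A$ by sliding $s$ to $v$ along the edge $sv$. Thus $A,B,S-s+u+v$ form a triangle in the same component, contradicting $n_i=2$. So $n_i\ge 3$.

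Now apply \cref{lem:clique-structure} to the clique $K_{n_i}$. In case (2), all members have the form $\Int+a_j$, and $C=\Int\cup\{a_1,\dots,a_{n_i}\}$ is a clique of $H$ of size $k-1+n_i\ge k+2$. Choose $x\in\Int$, which exists because $k\ge 2$. Then $\Int-x+a_1+a_2$ is a $k$-clique adjacent to $\Int+a_1$, obtained by sliding $x$ to $a_2$. It is not of the form $\Int+a_j$, so it lies in the same component as these cliques but outside the set $\{\Int+a_j\}$, which already exhausts the component. This is a contradiction. Hence case (1) holds: the component consists of $k$-subsets of a $(k+1)$-clique $U$, so $n_i\le k+1$.

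Finally, suppose $n_i<k+1$, so some $U-u$ with $u\in U$ is missing from the component. Take a member $U-u_1$, and pick $w\in U-u_1$ with $w\ne u$; this is possible since $|U-u_1|=k\ge 2$. Sliding $w$ to $u_1$ gives $U-w$, which is adjacent to $U-u_1$. Iterating this step, or directly sliding any $x\in U-u_1$ with $x\neq u$ other than $u$ itself to $u_1$, shows that every $U-x$ is adjacent to $U-u_1$. In particular $U-u$ is adjacent to $U-u_1$, choosing $x=u\in U-u_1$. So $U-u$ belongs to the component, a contradiction. Therefore $n_i=k+1$. The main care needed is the $n_i=2$ case, where \cref{lem:clique-structure} does not apply; it is handled by the explicit triangle constructed above.
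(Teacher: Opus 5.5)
Your proof is correct and follows essentially the same route as the paper. Both use the same disjoint-union witness, apply \cref{lem:clique-structure}, rule out case (2) via the clique $(\Int-x)+a_1+a_2$, and use closure of the component under adjacency in case (1). The only difference is that you treat $n_i=2$ by an explicit triangle, whereas the paper observes directly that an edge $AB$ puts all $k+1$ of the $k$-subsets of $A\cup B$ into the component.

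Your last paragraph is worded inconsistently: you impose $x\ne u$ and then choose $x=u$. The step you actually need is just the single slide $(U-u_1)-u+u_1=U-u$ along the edge $uu_1$, which is valid because $u\ne u_1$.
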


\begin{proof}
The case $k=1$ is immediate.
Now fix $k\ge 2$.
If every $n_i$ belongs to $\{1,k+1\}$, take the disjoint union of one copy of $K_k$ for each index with $n_i=1$ and one copy of $K_{k+1}$ for each index with $n_i=k+1$.
Since every $k$-clique of a disjoint union lies in a single component, the graph $\sfTS_k$ of that disjoint union is the disjoint union of the $\sfTS_k$-graphs of its components.
By the complete-graph formula in \cref{thm:warmup-ts}, this graph is exactly $G$.

Conversely, suppose $\TS{k}{H}\cong G$ for some graph $H$.
Let $X$ be a connected component of $\TS{k}{H}$.
If $|V(X)|=1$, there is nothing to prove.
Otherwise $X$ contains an edge $AB$.
Then $A\cup B$ is a $(k+1)$-clique of $H$, so all $k+1$ $k$-subsets of $A\cup B$ lie in the same connected component as $A$ and $B$.
Hence, $|V(X)|\ge k+1\ge 3$, and since $X$ is complete, \cref{lem:clique-structure} applies to all vertices of $X$.
If the first case of \cref{lem:clique-structure} holds, let $U$ be the corresponding $(k+1)$-clique.
Since $X$ is a connected component of $\TS{k}{H}$, it is closed under adjacency in $\TS{k}{H}$.
Thus, every $k$-subset of $U$ lies in $X$: any such subset not already listed is adjacent in $\TS{k}{H}$ to a vertex of $X$ of the form $U-u_i$.
The first case also says that every vertex of $X$ is one of these $k$-subsets, so $|V(X)|=k+1$.
If the second case holds, then there exist a $(k-1)$-clique $S$ and pairwise distinct vertices $a_1,\dots,a_q$ such that the vertices of $X$ are the cliques
\[
Q_i=S+a_i
\qquad (1\le i\le q),
\]
and $S\cup\{a_1,\dots,a_q\}$ is a clique of $H$.
Choosing $s\in S$ and distinct $i,j$, the $k$-clique $(S-s)+a_i+a_j$ also lies in that clique.
It is adjacent in $\TS{k}{H}$ to both $Q_i$ and $Q_j$, hence belongs to the same connected component, but it is different from every $Q_t$.
This contradiction shows that $X$ was not a full connected component.
Therefore, every connected component of $\TS{k}{H}$ has size either $1$ or $k+1$, which proves the displayed formula.
\end{proof}

The next lemma gives a divisibility condition on the degree in $\sfTS_k$ graphs.
\begin{lemma}\label{lem:camera-ts-degree-divisibility}
Let $H$ be a graph and let $k\ge 1$.
Let $A$ be a vertex of $\TS{k}{H}$.
Then
\[
\deg_{\TS{k}{H}}(A)=k\cdot t_A,
\]
where $t_A$ is the number of $(k+1)$-cliques of $H$ containing $A$.
In particular, if $G\cong \TS{k}{H}$ with $k\ge 2$, then every vertex degree of $G$ is divisible by $k$.
\end{lemma}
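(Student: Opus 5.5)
The plan is a double count of the neighbors of $A$, indexed by the $(k+1)$-cliques of $H$ that contain $A$. We set up a bijection between the neighbors of $A$ in $\TS{k}{H}$ and pairs $(U,u)$, where $U\supseteq A$ is a $(k+1)$-clique of $H$ and $u\in A$ is the vertex removed.

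First, take a neighbor $B$ of $A$. By definition $A\setminus B=\{u\}$ and $B\setminus A=\{v\}$ with $uv\in E(H)$. Put $U:=A\cup B=A+v$. Then $v$ is adjacent to every vertex of $B-v=A-u$, since $B$ is a clique, and also to $u$. So $U$ is a $(k+1)$-clique containing $A$, and $B=U-u$ with $u\in A$. Conversely, given a $(k+1)$-clique $U\supseteq A$ and a vertex $u\in A$, write $U=A+v$ and set $B=U-u$. Then $B$ is a $k$-clique, $A\setminus B=\{u\}$, $B\setminus A=\{v\}$, and $uv\in E(H)$ because $U$ is a clique. Hence $B$ is a neighbor of $A$.

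Next we check that the map $(U,u)\mapsto U-u$ is injective. The neighbor $B$ determines $U=A\cup B$, and it determines $u$ as the unique element of $A\setminus B$. So the two maps above are mutually inverse. For each of the $t_A$ choices of $U$ there are exactly $k$ choices of $u\in A$, and therefore $\deg(A)=k\,t_A$.

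For the final claim, if $G\cong\TS{k}{H}$, then every degree of $G$ equals $\deg_{\TS{k}{H}}(A)=k\,t_A$ for the corresponding vertex $A$, so it is divisible by $k$. The hypothesis $k\ge 2$ only serves to make this statement nontrivial. I do not expect a real obstacle here; the only point needing care is to check that $A\cup B$ is a clique, which uses the edge $uv$ together with the fact that $B$ is a clique.
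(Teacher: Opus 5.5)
Your proposal is correct and follows essentially the same argument as the paper: each neighbor $B$ of $A$ corresponds to the pair $(A\cup B,\,u)$, where $A\cup B$ is a $(k+1)$-clique containing $A$ and $u$ is the unique element of $A\setminus B$, so each such clique contributes exactly $k$ neighbors. You also spell out two steps the paper leaves implicit: why $A\cup B$ is a clique, and why the correspondence is injective.
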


\begin{proof}
Every neighbor $B$ of $A$ differs from $A$ by replacing one vertex of $A$ with one vertex outside $A$.
Hence, $A\cup B$ is a $(k+1)$-clique of $H$ containing $A$.
Conversely, if $C$ is a $(k+1)$-clique containing $A$, then for each $a\in A$ the set $C-a$ is a neighbor of $A$ in $\TS{k}{H}$.
Different choices of $C$ and $a$ give different neighbors.
Thus, each $(k+1)$-clique containing $A$ contributes exactly $k$ neighbors of $A$, proving the formula.
The divisibility statement is immediate.
\end{proof}

We are now ready to determine $\KTS(G)$ for the complements of the basic graph families in \cref{thm:warmup-ts}.
\begin{theorem}\label{thm:camera-ts-basic-complements}
The following formulas hold.
\begin{enumerate}[(1)]
\item
\[
\KTS(\overline{K_n})=\{k:k\ge 1\}
\qquad (n\ge 1).
\]
\item
\[
\KTS(\overline{K_{1,n}})=
\begin{cases}
\{k:k\ge 1\}, & n=1,\\
\{1,n-1\}, & n\ge 2.
\end{cases}
\]
\item For $2\le m\le n$,
\[
\KTS(\overline{K_{m,n}})=
\begin{cases}
\{1,n-1\}, & m=n,\\
\{1\}, & m<n.
\end{cases}
\]
\item
\[
\KTS(\overline{B_p})=
\begin{cases}
\{k:k\ge 1\}, & p=1,\\
\{1,p-1\}, & p\ge 2.
\end{cases}
\]
\item
\[
\KTS(\overline{P_n})=
\begin{cases}
\{k:k\ge 1\}, & n=2,\\
\{1\}, & n\ge 3.
\end{cases}
\]
\item
\[
\KTS(\overline{C_n})=\{1\}
\qquad (n\ge 4).
\]
\item
\[
\KTS(\overline{F_p})=
\begin{cases}
\{k:k\ge 1\}, & p=1,\\
\{1\}, & p=2,\\
\{1,2\}, & p=3,\\
\{1\}, & p\ge 4.
\end{cases}
\]
\end{enumerate}
\end{theorem}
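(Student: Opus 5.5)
The plan is to identify each complement explicitly, so that most items reduce to \cref{prop:camera-complement-clique-unions-TS}. The remaining items will be handled by \cref{lem:camera-ts-degree-divisibility}, \cref{lem:clique-number-formula}, and a local counting argument in the neighbourhood of a vertex. The relevant identifications are:
\begin{itemize}
\item $\overline{K_n}=nK_1$;
\item $\overline{K_{1,n}}=K_1\sqcup K_n$;
\item $\overline{K_{m,n}}=K_m\sqcup K_n$;
\item $\overline{B_p}=2K_1\sqcup K_p$, since $B_p=K_2\vee\overline{K_p}$;
\item $\overline{F_p}=K_1\sqcup \CP{p}$, since $F_p=K_1\vee pK_2$.
\end{itemize}
Items (1)--(4) then follow by reading off the condition $n_i\in\{1,k+1\}$ from \cref{prop:camera-complement-clique-unions-TS}. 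For instance, in (3) both parts have size at least $2$, so we need $m=n=k+1$. In (2) with $n=2$ the only candidate is $k=1$, which matches $\{1,n-1\}$.

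For (5), the case $n=2$ is $2K_1$, so every $k$ is feasible. For $n\ge 3$, $\overline{P_n}$ has vertices of degree $n-2$ (the endpoints) and of degree $n-3$ (the interior vertices). If $k\ge 2$ were feasible, \cref{lem:camera-ts-degree-divisibility} would force $k$ to divide both $n-2$ and $n-3$, hence to divide $1$, which is impossible.

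For (6) and for (7) with $p\ge 2$, I will use a local lemma. Let $A$ be a vertex of $\TS{k}{H}$ with $k\ge 2$, and let $U_1,\dots,U_t$ be the $(k+1)$-cliques of $H$ containing $A$, where $t=t_A$. By the proof of \cref{lem:camera-ts-degree-divisibility}, $N(A)$ is the disjoint union of the sets $\{U_j-a: a\in A\}$, each of which is a $k$-clique. Now compare $U_i-a$ and $U_j-b$ with $i\ne j$ and $a\ne b$. These two sets differ in two elements, so they are non-adjacent. Consequently $N(A)$ contains at least $\binom{t}{2}k(k-1)$ non-edges, and if $t=1$ then $N(A)$ is complete.

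The next step is to compare this with the neighbourhoods in the targets.
\begin{itemize}
\item In $\overline{C_n}$, the neighbourhood of every vertex induces $\overline{P_{n-3}}$. It has $tk=n-3$ vertices and exactly $n-4=tk-1$ non-edges, and it is non-complete once $n-3\ge 2$. The case $t=1$ would give $k=n-3$ with a complete neighbourhood, forcing $k\le 1$. Otherwise $\binom t2 k(k-1)\le tk-1$ forces $(t-1)(k-1)<2$, hence $t=k=2$ and $n=7$. That case dies by counting: every edge of $\TS{2}{H}$ lies in exactly one top triangle $A\cup B$, so the edges partition into triangles. But $|E(\overline{C_7})|=14$ is not divisible by $3$.
\item In $\CP{p}$ with $p\ge 3$, the neighbourhood of every vertex is $\CP{p-1}$. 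It has $tk=2p-2$ vertices and $p-1=tk/2$ non-edges. The case $t=1$ is excluded because $\CP{p-1}$ is not complete. The case $t\ge 2$ gives $(t-1)(k-1)\le 1$, so $t=k=2$ and $p=3$.
\end{itemize}
This excludes every $k\ge2$ when $p\ge 4$. For $p=3$ it leaves only $k=2$, and $k\ge 3$ is also killed by $\omega(\CP3)=3<k+1$ via \cref{lem:clique-number-formula}.

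The remaining cases are direct. For $p=2$, $\overline{F_2}=K_1\sqcup C_4$ is triangle-free but has an edge, so \cref{lem:clique-number-formula} leaves only $k=1$. For $p=1$, $\overline{F_1}=3K_1$, so every $k$ is feasible. For $p=3$, feasibility of $k=2$ comes from $H=K_4\sqcup K_2$: here $\TS{2}{K_4}\cong J(4,2)\cong \CP3$ and $\TS{2}{K_2}\cong K_1$.

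The main obstacle is the local non-edge count. One must check carefully that pairs with the same removed vertex $a$ but lying in different tops may or may not be adjacent, and that the lower bound uses only the pairs with $a\ne b$. One must also handle the small residual case $n=7$, $k=2$ for $\overline{C_n}$ by the global edge count.
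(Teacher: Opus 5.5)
Your proposal is correct, and its skeleton is the paper's. Items (1)--(4) come from the clique-union decompositions and \cref{prop:camera-complement-clique-unions-TS}, item (5) from \cref{lem:camera-ts-degree-divisibility}, and items (6)--(7) from the layered neighbourhood structure; your blocks $\{U_j-a:a\in A\}$ are exactly the transversal cliques of \cref{lem:camera-TSj-neighborhood-layers}.

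The genuine difference is in how you close (6). The paper counts non-neighbours per vertex, which only yields $(j-1)(m-1)\le 2$. It then needs:
\begin{itemize}
\item separate treatment of $n=4,5$;
\item a split into $n\in\{6,8\}$, $n=7$ and $n\ge 9$;
\item an endpoint argument to rule out $(j,m)\in\{(2,3),(3,2)\}$;
\item an identification of the $n=7$ neighbourhood as $2K_2$ or $C_4$ rather than $P_4$.
\end{itemize}
You instead count the non-edges of the whole neighbourhood, using the fact that $\overline{P_{n-3}}$ has only $n-4$ of them. This gives the strict bound $(t-1)(k-1)<2$ uniformly for all $n\ge 4$.

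The single survivor $n=7$, $k=2$ then falls to a global invariant the paper never uses. The edges of $\TS{2}{H}$ are partitioned by top triangles; more generally, $|E(\TS{k}{H})|$ is $\binom{k+1}{2}$ times the number of $(k+1)$-cliques of $H$. Since $3\nmid 14=|E(\overline{C_7})|$, this case is excluded.

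Likewise in (7), your clique-number bound $\omega(\overline{F_3})=3$ excludes $k\ge 3$ more directly than the paper's divisibility-then-$m=1$ argument. The paper's route stays purely local, while yours is shorter and avoids the case analysis.
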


\begin{proof}
\begin{enumerate}[(1)]
	\item The formula follows from \cref{prop:camera-complement-clique-unions-TS} and the decomposition $\overline{K_n} = nK_1$.
	\item The formula follows from \cref{prop:camera-complement-clique-unions-TS} and the decomposition $\overline{K_{1,n}} = K_n\sqcup K_1$.
	\item The formula follows from \cref{prop:camera-complement-clique-unions-TS} and the decomposition $\overline{K_{m,n}} = K_m\sqcup K_n$.
	\item The formula follows from \cref{prop:camera-complement-clique-unions-TS} and the decomposition $\overline{B_p} = K_p\sqcup 2K_1$.
	\item For paths, the case $n=2$ follows from $\overline{P_2}=2K_1=\overline{K_2}$ and (1).
	Now let $n\ge 3$ and suppose for some graph $H$ that $\overline{P_n}\cong \TS{k}{H}$ with $k\ge 2$.
	In $\overline{P_n}$, the two endpoints of $P_n$ have degree $n-2$, and the remaining $n-2$ vertices have degree $n-3$.
	By \cref{lem:camera-ts-degree-divisibility}, the integer $k$ divides both $n-2$ and $n-3$, hence divides $1$, impossible.
	Therefore, only $k=1$ is feasible, so $\KTS(\overline{P_n})=\{1\}$.
	\item For cycles, the cases $n=4$ and $n=5$ are $\overline{C_4}=2K_2$ and $\overline{C_5}=C_5$, respectively.
	Thus, the formula follows from \cref{prop:camera-complement-clique-unions-TS,thm:warmup-ts}.
	Now let $n\ge 6$ and suppose $\overline{C_n}\cong \TS{j}{H}$ for some $j\ge 2$.
		Fix a vertex $x$ of $\overline{C_n}$, and let $Q=\{a_1,\dots,a_j\}$ be the corresponding $j$-clique of $H$.
		Set
		\[
		S:=\bigcap_{i=1}^j N_H(a_i),\qquad m:=|S|.
		\]
		For each $i\in[j]$, let
		\[
		L_i:=\{Q-a_i+s:s\in S\}.
		\]
		Every neighbor of $Q$ in $\TS{j}{H}$ is obtained uniquely by replacing one $a_i$ with one vertex of $S$, so $N_{\overline{C_n}}(x)$ is partitioned into the $j$ layers $L_1,\dots,L_j$, each of size $m$.
		If $i\ne h$, then the vertices $Q-a_i+s\in L_i$ and $Q-a_h+t\in L_h$ are adjacent if and only if $s=t$.
		Indeed, for $s\ne t$ they differ in two vertices from each side, while for $s=t$ the exchanged vertices $a_i$ and $a_h$ are adjacent because $Q$ is a clique.
		Thus, a fixed neighborhood vertex $Q-a_i+s$ is nonadjacent to every vertex $Q-a_h+t$ with $h\ne i$ and $t\ne s$.
		Therefore, each neighborhood vertex has at least $(j-1)(m-1)$ nonneighbors inside the neighborhood, while
	\[
	jm=|N_{\overline{C_n}}(x)|=n-3.
	\]
	But $N_{\overline{C_n}}(x)\cong \overline{P_{n-3}}$, and every vertex of $\overline{P_{n-3}}$ has at most two nonneighbors inside that neighborhood.
	Thus,
	\[
	(j-1)(m-1)\le 2.
	\]
	If $n\in\{6,8\}$, then $n-3\in\{3,5\}$, so the factorization $jm=n-3$ with $j\ge 2$ forces $m=1$, making the neighborhood complete, contradiction.
	If $n=7$, then $jm=4$. If $m=1$, then the neighborhood is complete, impossible. Hence, $(j,m)=(2,2)$; but then the neighborhood is either $2K_2$ or $C_4$, never $\overline{P_4}=P_4$.
	If $n\ge 9$, then $jm\ge 6$. If $m=1$, then the neighborhood is complete, impossible. Hence, $m\ge 2$, and now $(j-1)(m-1)\le 2$ forces $(j,m)\in\{(2,3),(3,2)\}$.
	In both remaining cases every neighborhood vertex has at least two nonneighbors inside the neighborhood, whereas $\overline{P_{n-3}}$ has vertices with only one such nonneighbor.
	Hence, $\KTS(\overline{C_n})=\{1\}$ for all $n\ge 4$.

	\item For friendship complements, note that
	\[
	\overline{F_p}=K_1\sqcup \CP{p}.
	\]
	For $p=1$, we have $\overline{F_1}=3K_1$, so the first formula gives $\KTS(\overline{F_1})=\{k:k\ge 1\}$.
	For $p=2$, we have $\overline{F_2}=K_1\sqcup C_4$.
	Suppose $\overline{F_2}\cong \TS{k}{H}$ with $k\ge 2$.
	Every edge of a graph of the form $\TS{k}{H}$ lies in a triangle, because if two vertices $A$ and $B$ are adjacent, then $A\cup B$ is a $(k+1)$-clique and all $k$-subsets of $A\cup B$ form a clique of size $k+1\ge 3$ in $\TS{k}{H}$.
	However, the four edges of the $C_4$-component of $\overline{F_2}$ lie in no triangle.
	Therefore, $k\ge 2$ is impossible, and $\KTS(\overline{F_2})=\{1\}$.

	For $p=3$, the value $1$ is feasible because $\TS{1}{\overline{F_3}}\cong \overline{F_3}$.
	Also $2$ is feasible: if $H=K_2\sqcup K_4$, then the unique edge of the $K_2$-component becomes an isolated vertex of $\TS{2}{H}$, while every two adjacent edges of the $K_4$-component lie in a triangle and hence remain adjacent in $\TS{2}{H}$.
	Thus,
	\[
	\TS{2}{H}\cong K_1\sqcup L(K_4)=K_1\sqcup \CP{3}=\overline{F_3}.
	\]
	Now suppose $\overline{F_3}\cong \TS{k}{H}$ with $k\ge 3$.
	Every nonisolated vertex of $\overline{F_3}$ has degree $4$, so \cref{lem:camera-ts-degree-divisibility} implies that $k$ divides $4$.
	Hence, $k=4$.
		Fix a nonisolated vertex $x$ of $\overline{F_3}$, and let $Q=\{a_1,a_2,a_3,a_4\}$ be the corresponding $4$-clique of $H$.
		Set
		\[
		S:=\bigcap_{i=1}^4 N_H(a_i),\qquad m:=|S|.
		\]
		Every neighbor of $Q$ in $\TS{4}{H}$ is obtained uniquely by replacing one vertex of $Q$ with one vertex of $S$, so
		\[
		4m=\deg_{\overline{F_3}}(x)=4,
		\]
		Thus, $m=1$.
		Then the four neighbors of $x$ use the same replacement vertex, and any two of them are adjacent because the two exchanged vertices of $Q$ are adjacent.
		Hence, $N_{\overline{F_3}}(x)$ is complete, contrary to the fact that every nonisolated vertex of $K_1\sqcup \CP{3}$ has neighborhood isomorphic to $C_4$.
	Therefore, $k\ge 3$ is impossible, and $\KTS(\overline{F_3})=\{1,2\}$.

		Finally, let $p\ge 4$ and suppose $\overline{F_p}\cong \TS{k}{H}$ with $k\ge 2$.
		Fix a nonisolated vertex $x$ of $\overline{F_p}$, and let $Q=\{a_1,\dots,a_k\}$ be the corresponding $k$-clique of $H$.
		Set
		\[
		S:=\bigcap_{i=1}^k N_H(a_i),\qquad m:=|S|.
		\]
		Every neighbor of $Q$ in $\TS{k}{H}$ is obtained uniquely by replacing one vertex of $Q$ with one vertex of $S$, so
		\[
		km=\deg_{\overline{F_p}}(x)=2p-2.
		\]
	Also every nonisolated vertex of $\CP{p}$ has neighborhood isomorphic to $\CP{p-1}$, and each vertex of $\CP{p-1}$ has exactly one nonneighbor inside that neighborhood.
	If $m=1$, then the neighborhood of $x$ is complete, contradiction.
	Therefore, $m\ge 2$.
		As in the cycle case, each neighborhood vertex has at least $(k-1)(m-1)$ nonneighbors inside the neighborhood, so
		\[
		(k-1)(m-1)\le 1.
		\]
	Since $k\ge 2$ and $m\ge 2$, this forces $k=m=2$.
	Then $km=4$, so $2p-2=4$ and hence $p=3$, contradiction.
	Therefore, no value $k\ge 2$ is feasible for $p\ge 4$, and so $\KTS(\overline{F_p})=\{1\}$.
	This completes the proof.
\end{enumerate}
\end{proof}

\subsection{Johnson Graphs}
\label{sec:TS-Johnson}

We now turn to Johnson graphs on the TS side.
The main result of this section determines $\KTS(J(n,r))$ for every nontrivial Johnson graph $J(n,r)$.
The section is organized by the cases appearing in the main theorem: first the complete-graph levels $r=1$ and $r=n-1$, then the stable higher-rank case $n>2r\ge 4$, and finally the boundary case $n=2r\ge 4$.

The arguments in the two nontrivial regimes use different templates.
In the stable range $n>2r$, we first identify the Johnson maximum cliques as Johnson stars and record how many of them pass through a vertex or an edge; these data are then compared with the divisibility and local-count constraints coming from a hypothetical $\sfTS_j$ witness.
In the boundary range $n=2r$, the method changes: instead of Johnson-star counting, we compare the rook-graph neighborhood $L(K_{r,r})$ with the layered neighborhood structure forced by $\sfTS_j$ graphs.
Here, the \emph{$m \times n$ rook graph} is the line graph of the complete bipartite graph $K_{m,n}$, and can be identified with the graph on the cells of an $m\times n$ grid, where two cells are adjacent exactly when they lie in a common row or in a common column (e.g., like the moves of a rook in a chessboard).

\begin{theorem}\label{thm:camera-ts-johnson}
For every pair of integers $n\ge 2$ and $1\le r\le n-1$, let
\[
s:=\min\{r,n-r\}.
\]
Then
\[
\KTS(J(n,r))=
\begin{cases}
\{1\}, & n=2,\\
\{1,n-1\}, & n\ge 3\text{ and }s=1,\\
\{1,s\}, & s\ge 2\text{ and }n=2s,\\
\{1,s,n-s\}, & s\ge 2\text{ and }n>2s.
\end{cases}
\]
\end{theorem}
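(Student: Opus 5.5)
We split according to the cases of the statement. If $s=1$ then $J(n,r)\cong K_n$ (directly for $r=1$, and by Johnson duality for $r=n-1$), and the first two lines are exactly the complete-graph formula of \cref{thm:warmup-ts}: $n=2$ gives $K_2$ with $\KTS=\{1\}$, and $n\ge 3$ gives $\{1,n-1\}$. From now on assume $s\ge 2$, so that $n-s\ge s\ge 2$.

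\emph{Feasibility.} The value $1$ is always feasible. We have $\TS{r}{K_n}\cong J(n,r)$, and by Johnson duality $\TS{n-r}{K_n}\cong J(n,n-r)\cong J(n,r)$. Since $\{r,n-r\}=\{s,n-s\}$, both $s$ and $n-s$ are feasible. These two values coincide exactly when $n=2s$.

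\emph{Exclusion.} Suppose $J(n,r)\cong\TS{j}{H}$ with $j\ge 2$; we aim to show $j\in\{s,n-s\}$. Fix a vertex $x$ and let $Q=\{a_1,\dots,a_j\}$ be the corresponding $j$-clique of $H$. Put $S=\bigcap_i N_H(a_i)$, as in the proof of \cref{thm:camera-ts-basic-complements}. The neighbors of $Q$ are the sets $Q-a_i+t$ with $i\in[j]$ and $t\in S$, each arising uniquely. Adjacency among them is as follows:
\begin{itemize}
\item $Q-a_i+t$ and $Q-a_h+t$ with $i\ne h$ are always adjacent, since $a_ia_h\in E(H)$;
\item $Q-a_i+t$ and $Q-a_h+u$ with $i\ne h$ and $t\ne u$ are never adjacent, since they differ in two elements;
\item $Q-a_i+t$ and $Q-a_i+u$ are adjacent iff $tu\in E(H)$.
\end{itemize}
Thus the neighborhood of $x$ is the Cartesian product $K_j\,\square\,H[S]$. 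On the Johnson side, the neighborhood of a vertex $A$ of $J(n,r)$ consists of the sets $A-a+b$. It is the rook graph $K_r\,\square\,K_{n-r}\cong K_s\,\square\,K_{n-s}$.

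The key step compares maximal cliques in these two graphs. In $K_j\,\square\,H[S]$, each fiber $[j]\times\{t\}$ is a clique of size $j$. It is maximal: any other vertex $(i,u)$ with $u\ne t$ is adjacent to only one vertex of the fiber, and $j\ge 2$. Moreover $S\neq\emptyset$, because $x$ has positive degree. In the rook graph $K_s\,\square\,K_{n-s}$ with $s,n-s\ge 2$, every clique lies in a single row or a single column. Indeed, three vertices that are pairwise in a common line must share one line. Hence the maximal cliques are exactly the rows and columns, of sizes $n-s$ and $s$. Transporting a fiber through the isomorphism gives $j\in\{s,n-s\}$.

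Note that this step does not even need \cref{lem:clique-number-formula} or \cref{lem:camera-ts-degree-divisibility}, although those give consistency checks: $j\le n-s$ and $j\mid s(n-s)$. Combining with feasibility yields $\{1,s,n-s\}$ when $n>2s$ and $\{1,s\}$ when $n=2s$.

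The main obstacle is verifying the product description of the neighborhood carefully. In particular, one must check that every neighbor of $Q$ has the form $Q-a_i+t$ with $t\in S$ (this uses that a neighbor together with $Q$ forms a $(j+1)$-clique), and that the representation is unique. One must also check the claim that maximal cliques of the rook graph are only lines, which fails when $s=1$; this is why that case is handled separately.
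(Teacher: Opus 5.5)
Your proof is correct, and for the case $s\ge 2$ it takes a genuinely different and much shorter route than the paper. The paper splits that case in two.

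In the stable range $n>2s$ the paper chains several global obstructions, all resting on the classification of maximum Johnson cliques as stars:
\begin{itemize}
\item the clique-number formula of \cref{lem:clique-number-formula}, giving $j\le n-r$;
\item the maximum-clique count $\binom{n-r+j}{j-1}\mid\binom{n}{r-1}$, excluding $j>r$;
\item the triangle-free matching edges of \cref{lem:camera-TS2-neighborhood-prism}, excluding $j=2$;
\item the incidence divisibility $j\mid r$;
\item the block-counting argument of \cref{thm:camera-johnson-stable-proper-divisor-exclusion}, ruling out the proper divisors of $r$.
\end{itemize}
Only in the boundary range $n=2s$ does the paper use the layered neighborhood of \cref{lem:camera-TSj-neighborhood-layers}. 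Even there it counts the $2r$ maximum cliques of $L(K_{r,r})$ and how many others each one meets.

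You use the same product description $K_j\,\square\,H[S]$, but you extract a sharper local invariant. Each transversal clique $[j]\times\{t\}$ is a \emph{maximal} clique of size exactly $j$, because an outside vertex is adjacent to at most one of its vertices and $j\ge 2$. Every maximal clique of $K_s\,\square\,K_{n-s}$ is a line, of size $s$ or $n-s$.

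This settles both ranges uniformly. It needs neither \cref{lem:clique-structure}, nor \cref{lem:clique-number-formula}, nor the Johnson clique classification. It also yields a general obstruction: in any $\mathsf{TS}_j$-graph with $j\ge 2$, every non-isolated vertex has a neighborhood containing a maximal clique of size exactly $j$.

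What the paper's route buys in exchange is a set of general global constraints on $\mathsf{TS}_j$-graphs, namely \cref{prop:camera-TSj-max-clique-divisibility,prop:camera-TSj-max-clique-incidence-divisibility}. These may be useful for other targets, but for this theorem your argument is considerably simpler.
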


The proof is deferred until the end of the subsection, after the complete-graph, stable, and boundary cases have been established.

We start the Johnson analysis with the standard neighborhood description, stated here in line-graph form and illustrated in \cref{fig:jnr-neighborhood-line}; see, for example, Brouwer, Cohen, and Neumaier~\cite[p.~256]{BrouwerCN89}.
\begin{lemma}\label{lem:camera-jnr-neighborhood-line}
Let $1\le r\le n-1$, and let $A$ be a vertex of $J(n,r)$.
Then
\[
N_{J(n,r)}(A)\cong L(K_{r,n-r}).
\]
\end{lemma}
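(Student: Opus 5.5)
We will prove the statement by writing down an explicit isomorphism. Fix the vertex $A$ of $J(n,r)$, an $r$-subset of $[n]$, and let $\bar A:=[n]\setminus A$, so $|\bar A|=n-r$. Let $K_{r,n-r}$ be the complete bipartite graph with bipartition classes $A$ and $\bar A$. Its edges are exactly the pairs $\{a,b\}$ with $a\in A$ and $b\in\bar A$. Define
\[
\varphi:\ N_{J(n,r)}(A)\to E(K_{r,n-r}),\qquad B\mapsto \{a,b\}\ \text{ where } A\setminus B=\{a\},\ B\setminus A=\{b\}.
\]

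First we check that $\varphi$ is a bijection. A vertex $B$ is a neighbor of $A$ exactly when $|A\cap B|=r-1$. In that case $A\setminus B$ and $B\setminus A$ are both singletons, the first contained in $A$ and the second in $\bar A$. Hence $\varphi$ is well defined. The inverse map sends an edge $\{a,b\}$ to $A-a+b$, so $\varphi$ is a bijection and the vertex counts agree: there are $r(n-r)$ vertices on each side.

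Next we check that adjacency is preserved. Take distinct neighbors $B=A-a+b$ and $B'=A-a'+b'$, so $(a,b)\ne(a',b')$.
\begin{itemize}
\item If $a=a'$, then $b\ne b'$ and $B\cap B'=A-a$, which has size $r-1$. So $B$ and $B'$ are adjacent in $J(n,r)$, and the edges $\{a,b\}$ and $\{a,b'\}$ share the endpoint $a$.
\item If $b=b'$, then $a\ne a'$ and $B\cap B'=(A-a-a')+b$, which again has size $r-1$. Again both sides are adjacent.
\item If $a\ne a'$ and $b\ne b'$, then $B\cap B'=A-a-a'$, which has size $r-2$. So $B$ and $B'$ are nonadjacent, and the edges $\{a,b\}$ and $\{a',b'\}$ are disjoint, hence nonadjacent in the line graph.
\end{itemize}
These three cases are exhaustive, so $\varphi$ is an isomorphism onto $L(K_{r,n-r})$.

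There is no real obstacle here; the only care needed is to state the three cases cleanly and to note that they cover all distinct pairs. It is worth recording the rook-graph reading for later use. The "rows" indexed by $a\in A$ are the Johnson stars with core $A-a$ restricted to $N(A)$. The "columns" indexed by $b\in\bar A$ are the Johnson top cliques with top $A+b$ restricted to $N(A)$. This is the description the boundary case $n=2r$ will compare against the layered structure $L_1,\dots,L_j$ forced by $\sfTS_j$ neighborhoods.
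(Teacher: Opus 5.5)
Your proof is correct and follows the same approach as the paper, which cites Brouwer, Cohen, and Neumaier and, in its figure, uses exactly your correspondence $A-a+b \leftrightarrow ab$ between neighbors of $A$ and edges of $K_{r,n-r}$, with adjacency of neighbors matching incidence of edges. Your three-case intersection check makes explicit the verification that the paper leaves to the reference, and it also covers the degenerate cases $r=1$ and $r=n-1$, where only one of the first two cases can occur.
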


\begin{figure}[ht]
\centering
\begin{adjustbox}{max width=\textwidth}
\begin{tikzpicture}[x=1.18cm,y=1.12cm]
\tikzset{
  title/.style={font=\bfseries\small, align=center},
  note/.style={font=\small, align=center},
  tiny/.style={font=\scriptsize, align=center},
  gridline/.style={draw=black!55, line width=.45pt},
  emphcell/.style={fill=black!10, draw=black, line width=.75pt},
  v/.style={circle, draw=black!70, fill=white, minimum size=21pt, inner sep=1pt, font=\scriptsize},
  emphv/.style={v, draw=black, fill=black!8, line width=.75pt},
  e/.style={draw=black!45, line width=.45pt},
  emphe/.style={draw=black, line width=1.1pt},
  maparrow/.style={->, draw=black!70, line width=.55pt}
}

\node[title] at (2.35,3.85) {neighbors of $A$ in $J(n,r)$};
\node[note] at (2.35,3.35) {$A=\{a_1,a_2,a_3\}$,\quad $B=\{b_1,b_2,b_3\}$};

\node[tiny] at (2.35,2.85) {add $b_j$};
\foreach \j/\x in {1/1.35,2/2.35,3/3.35} {
  \node[tiny] at (\x,2.45) {$b_\j$};
}
\foreach \i/\y in {1/1.95,2/1.25,3/0.55} {
  \node[tiny] at (0.45,\y) {$a_\i$};
}
\node[tiny, rotate=90] at (0.05,1.25) {delete $a_i$};

\foreach \x in {0.85,1.85,2.85,3.85} {
  \draw[gridline] (\x,0.2) -- (\x,2.3);
}
\foreach \y in {0.2,0.9,1.6,2.3} {
  \draw[gridline] (0.85,\y) -- (3.85,\y);
}
\path[emphcell] (1.85,0.9) rectangle (2.85,1.6);
\node[tiny] at (1.35,1.95) {$11$};
\node[tiny] at (2.35,1.95) {$12$};
\node[tiny] at (3.35,1.95) {$13$};
\node[tiny] at (1.35,1.25) {$21$};
\node[tiny] at (2.35,1.25) {$22$};
\node[tiny] at (3.35,1.25) {$23$};
\node[tiny] at (1.35,0.55) {$31$};
\node[tiny] at (2.35,0.55) {$32$};
\node[tiny] at (3.35,0.55) {$33$};

\draw[maparrow] (4.25,1.25) -- (6.25,1.25)
  node[midway, above=5pt, note, text width=3.2cm]
    {swap-neighbor $A-a_i+b_j$\\corresponds to edge $a_ib_j$};

\node[title] at (9.15,3.85) {$K_{r,n-r}$ edge model};
\node[v] (a1) at (7.25,2.60) {$a_1$};
\node[emphv] (a2) at (7.25,1.60) {$a_2$};
\node[v] (a3) at (7.25,0.60) {$a_3$};
\node[v] (b1) at (11.05,2.60) {$b_1$};
\node[emphv] (b2) at (11.05,1.60) {$b_2$};
\node[v] (b3) at (11.05,0.60) {$b_3$};
\foreach \i in {1,2,3} {
  \foreach \j in {1,2,3} {
    \draw[e] (a\i) -- (b\j);
  }
}
\draw[emphe] (a2) -- (b2);
\node[note] at (9.15,0.00) {vertices of $L(K_{r,n-r})$ are these edges};

\end{tikzpicture}
\end{adjustbox}
\caption{The neighborhood model for a vertex $A$ of $J(n,r)$. A neighbor is obtained by deleting some $a_i\in A$ and adding some $b_j\in [n]\setminus A$; the swap $A-a_i+b_j$ corresponds to the edge $a_ib_j$ of $K_{r,n-r}$. Adjacency between neighbors is therefore incidence between the corresponding bipartite edges.}
\label{fig:jnr-neighborhood-line}
\end{figure}

The next lemma identifies the neighborhood structure that arises from a $\sfTS_2$ witness.
\begin{lemma}\label{lem:camera-TS2-neighborhood-prism}
Let $H$ be a graph, let $G$ be a graph, and fix an isomorphism $\phi:G\to\TS{2}{H}$.
Let $e=\{u,v\}\in E(H)$, and put $x_e:=\phi^{-1}(e)$.
Put
\[
S:=N_H(u)\cap N_H(v).
\]
Then, after identifying each vertex $y$ of $N_G(x_e)$ with the edge $\phi(y)$ of $H$, the neighborhood $N_G(x_e)$ has vertex set
\[
\bigl\{\{u,s\}:s\in S\bigr\}\;\cup\;\bigl\{\{v,s\}:s\in S\bigr\},
\]
and is obtained from two copies of the induced graph $H[S]$ by adding the perfect matching
\[
\bigl\{\{u,s\},\{v,s\}\bigr\}:s\in S.
\]
In particular, every edge of this matching lies in no triangle of $N_G(x_e)$.
\end{lemma}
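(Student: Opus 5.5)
The plan is to unwind the definition of $\TS{2}{H}$ directly at the vertex $e=\{u,v\}$; the argument is the case $k=2$ of the layered-neighborhood computation already used for $\overline{C_n}$ and $\overline{F_p}$ in \cref{thm:camera-ts-basic-complements}.

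\emph{Vertex set.} Since $\phi$ is an isomorphism, it maps $N_G(x_e)$ bijectively onto $N_{\TS{2}{H}}(e)$ and preserves adjacency inside it. So it suffices to describe the neighborhood of $e$ in $\TS{2}{H}$.
A neighbor $f$ of $e$ is a $2$-clique with $|e\cap f|=1$ and $e\cup f$ a triangle; this is the $(k+1)$-clique observation from the proof of \cref{lem:camera-ts-degree-divisibility}.
Write $f=\{u,s\}$ or $f=\{v,s\}$ with $s\notin e$. Then $e\cup f=\{u,v,s\}$ being a clique means exactly $s\in S$.
Conversely, every $s\in S$ yields the two neighbors $\{u,s\}$ and $\{v,s\}$, and these $2|S|$ edges are pairwise distinct. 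This gives the stated vertex set, which splits into a $u$-layer and a $v$-layer, each of size $|S|$.

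\emph{Adjacency inside the neighborhood.} Two edges of $H$ are adjacent in $\TS{2}{H}$ iff they share one endpoint and the other two endpoints are adjacent in $H$. This gives three cases.
\begin{enumerate}[(a)]
\item Same layer: $\{u,s\}$ and $\{u,t\}$ with $s\neq t$ share $u$. They are adjacent iff $st\in E(H)$. Hence each layer induces a copy of $H[S]$ via $s\mapsto\{u,s\}$, and likewise for $v$.
\item Different layers, same $s$: $\{u,s\}$ and $\{v,s\}$ share $s$, and $uv\in E(H)$, so they are always adjacent. These pairs form the perfect matching.
\item Different layers, $s\neq t$: $\{u,s\}$ and $\{v,t\}$ are disjoint, so they are not adjacent.
\end{enumerate}
This yields the claimed structure.

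\emph{Triangle-freeness of the matching.} Suppose a triangle in $N_G(x_e)$ contains a matching edge $\{u,s\}\{v,s\}$, with third vertex $y$. If $y$ lies in the $u$-layer, say $y=\{u,t\}$ with $t\neq s$, then $y$ must be adjacent to $\{v,s\}$. That is a cross-layer pair with $t\neq s$, which is excluded by case (c). The $v$-layer is symmetric.

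The only point needing care is confirming that the cross-layer non-adjacency in case (c) uses $s\neq t$; everything else is routine, so I expect no real obstacle.
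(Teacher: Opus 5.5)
Your proposal is correct and follows essentially the same route as the paper: identify the neighbors of $e$ as the edges $\{u,s\},\{v,s\}$ with $s\in S$, check the same-layer, matched, and unmatched cross-layer adjacency cases, and deduce that a matching edge has no common neighbor. The only cosmetic difference is in that last step: the paper phrases it as disjointness of the neighbor sets of $\{u,s\}$ and $\{v,s\}$, while you rule out a third vertex in each layer separately.
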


\begin{proof}
A vertex of $N_G(x_e)$ corresponds under $\phi$ to an edge of $H$ that is $\sfTS_2$-adjacent to $e=\{u,v\}$, hence exactly an edge of the form $\{u,s\}$ or $\{v,s\}$ with $s\in S=N_H(u)\cap N_H(v)$.
Thus, the displayed set is the full vertex set of $N_G(x_e)$.
Now let $s,t\in S$.
The vertices $\{u,s\}$ and $\{u,t\}$ are adjacent in $N_G(x_e)$ if and only if $st\in E(H)$, and the same holds for $\{v,s\}$ and $\{v,t\}$.
A cross pair $\{u,s\},\{v,t\}$ is adjacent if and only if $s=t$.
Therefore, $N_G(x_e)$ is precisely the graph described in the statement.
For the last claim, fix $s\in S$.
Any neighbor of $\{u,s\}$ inside $N_G(x_e)$ is either $\{v,s\}$ or a vertex of the form $\{u,t\}$ with $st\in E(H)$, while any neighbor of $\{v,s\}$ is either $\{u,s\}$ or a vertex of the form $\{v,t\}$ with $st\in E(H)$.
These two neighbor sets are disjoint, so the matching edge $\{u,s\}\{v,s\}$ lies in no triangle of $N_G(x_e)$.
\end{proof}

The complete-graph Johnson cases are exactly the complete-graph witnesses.
\begin{corollary}\label{cor:camera-johnson-complete-levels-ts}
For every integer $n\ge 2$,
\[
\KTS(J(n,1))=\KTS(J(n,n-1))=
\begin{cases}
\{1\}, & n=2,\\
\{1,n-1\}, & n\ge 3.
\end{cases}
\]
\end{corollary}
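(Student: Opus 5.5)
The plan is to reduce everything to the complete-graph case of \cref{thm:warmup-ts}. The key observation is that $J(n,1)$ and $J(n,n-1)$ are both isomorphic to $K_n$.

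For $J(n,1)$, the vertices are the singletons $\{i\}$ with $i\in[n]$. Any two distinct singletons intersect in the empty set, which has size $0=r-1$, so $J(n,1)\cong K_n$. For $J(n,n-1)$, we invoke the Johnson duality $J(n,n-1)\cong J(n,1)$, which gives $J(n,n-1)\cong K_n$ as well. Since $\KTS$ depends only on the isomorphism class of the target graph, we obtain
\[
\KTS(J(n,1))=\KTS(J(n,n-1))=\KTS(K_n).
\]

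Next, we read off the formula from part (1) of \cref{thm:warmup-ts}, restricted to $n\ge 2$. For $n=2$ this gives $\{1\}$, and for $n\ge 3$ it gives $\{1,n-1\}$. This matches the displayed formula exactly.

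There is no genuine obstacle in this argument. The only point requiring care is that the case $n=1$ of \cref{thm:warmup-ts}, which yields all $k\ge 1$, is excluded by the hypothesis $n\ge 2$. For $n=2$, the two levels $r=1$ and $r=n-1$ coincide, so there is nothing extra to check there.
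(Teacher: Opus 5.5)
Your proposal is correct and follows the paper's proof: it identifies $J(n,1)\cong K_n\cong J(n,n-1)$ and reads the answer off the complete-graph formula in \cref{thm:warmup-ts}. Your extra remark that the $n=1$ case of that formula is excluded by the hypothesis $n\ge 2$ is accurate, and the paper leaves it implicit.
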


\begin{proof}
Since
\[
J(n,1)\cong K_n\cong J(n,n-1),
\]
the claim is exactly the complete-graph formula from \cref{thm:warmup-ts}.
\end{proof}

We can now exclude higher-rank Johnson graphs from the class of $\sfTS_2$-graphs.
\begin{theorem}\label{thm:camera-johnson-no-TS2}
For integers $n,r$ with $3\le r\le n-3$,
\[
2\notin \KTS(J(n,r)).
\]
\end{theorem}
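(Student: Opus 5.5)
The plan is to compare local neighborhood structures. Suppose, for contradiction, that $\phi:J(n,r)\to\TS{2}{H}$ is an isomorphism for some graph $H$ with $3\le r\le n-3$. Fix any vertex $A$ of $J(n,r)$; then $\phi(A)$ is an edge $e=\{u,v\}$ of $H$. By \cref{lem:camera-TS2-neighborhood-prism}, $N(A)$ consists of two copies of $H[S]$ joined by a perfect matching, and every matching edge lies in no triangle of $N(A)$. On the other hand, \cref{lem:camera-jnr-neighborhood-line} gives $N(A)\cong L(K_{r,n-r})$, the $r\times(n-r)$ rook graph. The contradiction will come from showing that in this rook graph every edge lies in a triangle, while the matching is nonempty.

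First I check that the matching is nonempty, i.e.\ $S\neq\emptyset$. Since $J(n,r)$ has edges, $\deg(A)=r(n-r)\ge 9>0$. The neighborhood has $2|S|$ vertices, so $|S|=r(n-r)/2\ge 1$.

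Next I verify that every edge of $L(K_{r,n-r})$ lies in a triangle when $r\ge 3$ and $n-r\ge 3$. Two adjacent cells of the grid share a row or a column. If they share a row, that row has $n-r\ge 3$ cells, so a third cell of the row is adjacent to both. If they share a column, that column has $r\ge 3$ cells, and the same argument applies. Hence every edge of $N(A)$ lies in a triangle. This contradicts the triangle-free matching edges, so $2\notin\KTS(J(n,r))$.

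There is no serious obstacle here. The only point needing care is that the hypothesis $3\le r\le n-3$ is exactly what makes both rows and columns have at least three cells. For $r=2$ the column-sharing edges lie in no triangle, which is consistent with the claim being restricted to $3\le r\le n-3$.
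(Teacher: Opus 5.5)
Your proposal is correct and follows essentially the same route as the paper's proof. You compare the prism structure from \cref{lem:camera-TS2-neighborhood-prism}, whose matching edges lie in no triangle and whose matching is nonempty because $S\neq\emptyset$, with the fact that every edge of $L(K_{r,n-r})$ lies in a triangle when $r,n-r\ge 3$. That is exactly the paper's argument.
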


\begin{proof}
Suppose to the contrary that $J(n,r)\cong \TS{2}{H}$ for some graph $H$, where $3\le r\le n-3$.
Fix a vertex $X$ of $J(n,r)$.
By \cref{lem:camera-jnr-neighborhood-line},
\[
N_{J(n,r)}(X)\cong L(K_{r,n-r}).
\]
Because $r\ge 3$ and $n-r\ge 3$, every vertex of $K_{r,n-r}$ has degree at least $3$.
Hence, if two edges of $K_{r,n-r}$ are incident, then their common endpoint is incident with a third edge.
Therefore, every edge of the line graph $L(K_{r,n-r})$ lies in a triangle.
On the other hand, since $J(n,r)\cong \TS{2}{H}$, fix an isomorphism $\phi:J(n,r)\to\TS{2}{H}$ and write $\phi(X)=e=\{u,v\}\in E(H)$.
Set $S:=N_H(u)\cap N_H(v)$.
The graph $L(K_{r,n-r})$ has $r(n-r)>0$ vertices, so $N_{J(n,r)}(X)$ is nonempty; by the vertex-set description in \cref{lem:camera-TS2-neighborhood-prism}, this implies $S\ne\emptyset$.
Choose $s\in S$.
By \cref{lem:camera-TS2-neighborhood-prism}, the edge between $\phi^{-1}(\{u,s\})$ and $\phi^{-1}(\{v,s\})$ in $N_{J(n,r)}(X)$ lies in no triangle.
This contradiction proves that $2\notin \KTS(J(n,r))$.
\end{proof}

The next proposition gives a divisibility condition coming from maximum cliques of $\sfTS_j$.
\begin{proposition}\label{prop:camera-TSj-max-clique-divisibility}
Let $H$ be a graph, let $j\ge 2$, and let $G$ be a graph with a fixed isomorphism to $\TS{j}{H}$.
Write $q=\omega(G)$ and let $c_t(X)$ denote the number of $t$-cliques of a graph $X$.
If $q\ge j+2$, then
\[
\binom{q+j-1}{j-1}\mid c_q(G).
\]
More precisely,
\[
c_q(G)=\binom{q+j-1}{j-1}\,c_{q+j-1}(H).
\]
\end{proposition}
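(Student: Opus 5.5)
Throughout, $q=\omega(G)\ge j+2$; in particular $G$ has edges, so \cref{lem:clique-number-formula} forces $j<\omega(H)$. The plan is to build an explicit bijection between the maximum cliques of $G$ and the pairs (big clique of $H$, core inside it), via the $\sfTS_j$-structure, and then count.

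\textbf{Step 1: the clique number.} Part (3) of \cref{lem:clique-number-formula} gives $q=\max\{j+1,\omega(H)-j+1\}$. Since $q\ge j+2>j+1$, we must have $q=\omega(H)-j+1$, that is,
\[
\omega(H)=q+j-1.
\]

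\textbf{Step 2: shape of a $q$-clique.} Let $Q$ be a $q$-clique of $G$, viewed through the isomorphism as a clique of $\TS{j}{H}$. As $q>j+1$, \cref{lem:clique-structure} leaves only the second case. So there is a $(j-1)$-clique $\Int$ and distinct $a_1,\dots,a_q\notin\Int$ with the vertices of $Q$ equal to $\Int+a_i$. By the remark after \cref{lem:clique-structure}, $C:=\Int\cup\{a_1,\dots,a_q\}$ is a clique of $H$ of size $q+j-1=\omega(H)$, hence a maximum clique.

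\textbf{Step 3: recovering $(C,\Int)$ from $Q$.} $\Int$ is the intersection of the vertices of $Q$, because $q\ge 2$ and the $a_i$ are distinct. $C$ is their union. So $Q\mapsto (C,\Int)$ is a well-defined, injective map into the set of pairs $(C,S)$ with $C$ a $(q+j-1)$-clique of $H$ and $S\in\binom{C}{j-1}$.

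\textbf{Step 4: surjectivity.} Given such a pair $(C,S)$, the $q$ sets $S+c$ for $c\in C\setminus S$ are $j$-cliques of $H$. Any two of them differ by swapping $c$ for $c'$, and $cc'\in E(H)$ since $C$ is a clique. So they are pairwise $\sfTS$-adjacent and form a $q$-clique of $G$, whose image under the map of Step 3 is $(C,S)$.

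\textbf{Conclusion.} The map is a bijection, so
\[
c_q(G)=\binom{q+j-1}{j-1}\,c_{q+j-1}(H),
\]
and the divisibility follows.

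The main obstacle is Step 1: pinning down $\omega(H)$ exactly. This is what guarantees that $C$ is a maximum clique, which the counting does not strictly require, but it is what ties the counting to $c_{q+j-1}(H)$. Step 1 needs the hypothesis $q\ge j+2$, which is also what rules out case (1) of \cref{lem:clique-structure}. Everything else is routine.
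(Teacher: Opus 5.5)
Your proposal is correct and is essentially the paper's proof: you rule out case (1) of \cref{lem:clique-structure} using $q>j+1$, recover the pair $(C,\Int)$ from a $q$-clique as the union and the intersection of its vertices, and invert the map via the sets $S+c$ for $c\in C\setminus S$. Your Step 1, which computes $\omega(H)=q+j-1$, is valid but unnecessary and is not an obstacle at all, since $|C|=q+j-1$ already follows from $|\Int|=j-1$ and the $q$ distinct $a_i$; the paper omits this step.
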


\begin{proof}
Use the fixed isomorphism to regard the vertices of $G$ as the $j$-cliques of $H$.
Fix a $q$-clique $Q$ of $G$.
Since $q>j+1$, case~(1) of \cref{lem:clique-structure} cannot occur.
Hence, there exist a $(j-1)$-clique $\Int$ of $H$ and pairwise distinct vertices $a_1,\dots,a_q\notin \Int$ such that the vertices of $Q$ are exactly the $j$-cliques
\[
\Int+a_1,\dots,\Int+a_q,
\]
and $\Int\cup\{a_1,\dots,a_q\}$ is a clique of size $q+j-1$ in $H$.
The core $\Int$ is uniquely determined by $Q$, because it is the intersection of all $j$-cliques represented by the vertices of $Q$.
Thus, each $q$-clique $Q$ determines a unique pair $(K,\Int)$ where $K$ is a $(q+j-1)$-clique of $H$ and $\Int\subseteq V(K)$ has size $j-1$.
Conversely, if $K$ is a $(q+j-1)$-clique of $H$ and $\Int\subseteq V(K)$ has size $j-1$, then the $q$ vertices
\[
\{\Int+a:a\in V(K)\setminus \Int\}
\]
form a $q$-clique of $\TS{j}{H}$.
Hence,
\[
c_q(G)=\binom{q+j-1}{j-1}\,c_{q+j-1}(H),
\]
as required.
\end{proof}

We next record the maximum-clique structure of stable Johnson graphs for later use.
This is a standard consequence of the star/top classification of Johnson cliques~\cite[Thm.~3.4 and Cor.~3.5]{ShuldinerO22}; once the maximum cliques are known to be Johnson stars, the next results turn that structure into divisibility and incidence obstructions for $\sfTS_j$ witnesses.
\begin{lemma}\label{lem:camera-JnR-max-stars-stable}
Assume $n>2r\ge 4$.
Then every maximum clique of $J(n,r)$ is a Johnson star clique of size $n-r+1$.
Moreover, every vertex of $J(n,r)$ lies in exactly $r$ maximum cliques.
\end{lemma}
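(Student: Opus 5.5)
We need to prove that for $n>2r\ge4$, every maximum clique of $J(n,r)$ is a Johnson star of size $n-r+1$, and that every vertex lies in exactly $r$ maximum cliques.

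The plan is to classify all cliques of $J(n,r)$ directly and then compare sizes. The key step is the standard star/top dichotomy: every clique of $J(n,r)$ with at least three vertices is contained in a Johnson star or in a Johnson top clique. This can be cited from \cite[Thm.~3.4]{ShuldinerO22}, or proved in a few lines, as follows.

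\begin{enumerate}[(1)]
\item Take three pairwise adjacent $r$-sets $A,B,C$. Then $A\cap B$ has size $r-1$. Either $C\supseteq A\cap B$, or $C\subseteq A\cup B$, because $C$ must meet both $A$ and $B$ in $r-1$ elements.
\item Extend this to a whole clique $\mathcal{Q}$. If some member of $\mathcal{Q}$ contains $A\cap B$ but is not contained in $A\cup B$, then every other member must also contain $A\cap B$, so $\mathcal{Q}$ lies in the star with core $A\cap B$.
\item Symmetrically, if some member lies inside $A\cup B$ but does not contain $A\cap B$, then $\mathcal{Q}$ lies in the top clique with top $A\cup B$.
\item I expect the main bookkeeping to be this two-way case check. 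Indeed, a set $D\supseteq A\cap B$ with $D\not\subseteq A\cup B$ and a set $E\subseteq A\cup B$ with $E\not\supseteq A\cap B$ satisfy $|D\cap E|\le r-2$. So two such sets cannot coexist in the clique.
\end{enumerate}

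Next comes the size comparison.
\begin{itemize}
\item A Johnson star with core $S$ has exactly $n-(r-1)=n-r+1$ members.
\item A Johnson top clique has exactly $r+1$ members.
\item Since $n>2r$, we have $n-r+1>r+1$.
\item Cliques of size at most $2$ are far smaller, since $r\ge2$ gives $n-r+1\ge 4$.
\end{itemize}
Hence $\omega(J(n,r))=n-r+1$. Any maximum clique then has size $n-r+1>r+1$, so it cannot lie in a top clique. By the dichotomy it lies in a star, and by maximality it equals a full Johnson star.

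Stars with distinct cores $S\ne S'$ are distinct as sets of vertices. This holds because the core is recovered as the intersection of the members, which is valid since a star has at least two members.

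Finally, for the count, fix a vertex $A$. A Johnson star with core $S$ contains $A$ exactly when $S\subseteq A$ and $|S|=r-1$. There are $\binom{r}{r-1}=r$ such cores $S$, and they give distinct maximum cliques. Therefore $A$ lies in exactly $r$ maximum cliques.
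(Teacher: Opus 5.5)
Your proof is correct and follows the same route as the paper. The paper gives no argument of its own: it invokes the star/top classification of Johnson cliques \cite{ShuldinerO22}, and the conclusion then follows from the size comparison $n-r+1>r+1$ and from counting the $r$ cores $S\subseteq A$, exactly as you do. Your inline proof of the dichotomy makes the lemma self-contained, and it is essentially the case $H=K_n$, $k=r$ of the paper's later \cref{thm:camera-TJ-clique-structure}, since $J(n,r)\cong\TJ{r}{K_n}$.
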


Specializing this divisibility condition yields the stable Johnson obstruction.
\begin{corollary}\label{cor:camera-johnson-general-max-clique-divisibility}
Let $n>2r\ge 4$, and let $j$ satisfy $2\le j\le n-r-1$.
If $j\in \KTS(J(n,r))$, then
\[
\binom{n-r+j}{j-1}\mid \binom{n}{r-1}.
\]
\end{corollary}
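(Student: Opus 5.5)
We apply \cref{prop:camera-TSj-max-clique-divisibility} with $G=J(n,r)$, and use \cref{lem:camera-JnR-max-stars-stable} to compute $\omega(G)$ and $c_q(G)$.

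\textbf{Step 1: the clique number and the number of maximum cliques.} Assume $j\in\KTS(J(n,r))$ with $2\le j\le n-r-1$, and fix an isomorphism $J(n,r)\cong\TS{j}{H}$. Since $n>2r\ge 4$, \cref{lem:camera-JnR-max-stars-stable} says that every maximum clique is a Johnson star of size $n-r+1$. Hence
\[
q:=\omega(J(n,r))=n-r+1 .
\]
Johnson stars correspond bijectively to their cores, which are the $(r-1)$-subsets of $[n]$. Each such core gives a star of size exactly $n-r+1$, so every star is maximum. Therefore
\[
c_q(J(n,r))=\binom{n}{r-1}.
\]
As a consistency check, double counting vertex–clique incidences gives $\binom{n}{r}\cdot r = c_q\cdot (n-r+1)$, which agrees with the "exactly $r$ maximum cliques" part of the lemma.

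\textbf{Step 2: checking the hypothesis $q\ge j+2$.} The assumption $j\le n-r-1$ gives $j+2\le n-r+1=q$, so \cref{prop:camera-TSj-max-clique-divisibility} applies. It yields
\[
\binom{q+j-1}{j-1}=\binom{n-r+j}{j-1}\ \Big|\ c_q(J(n,r))=\binom{n}{r-1},
\]
which is the claim.

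\textbf{Main obstacle.} The only nontrivial input is the count $c_q=\binom{n}{r-1}$. This requires knowing that no top clique attains size $n-r+1$; otherwise the tops would also be counted. Top cliques have size $r+1$, and $r+1<n-r+1$ because $n>2r$. This is exactly what \cref{lem:camera-JnR-max-stars-stable} encodes, so I will cite it directly rather than reprove the star/top classification.
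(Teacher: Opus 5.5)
Your proposal is correct and matches the paper's proof: you get $q=\omega(J(n,r))=n-r+1$ and $c_q(J(n,r))=\binom{n}{r-1}$ from \cref{lem:camera-JnR-max-stars-stable}, deduce $q\ge j+2$ from $j\le n-r-1$, and apply \cref{prop:camera-TSj-max-clique-divisibility}. You also note explicitly that every Johnson star, not only every maximum clique, has size $n-r+1$, so the maximum cliques are in bijection with all $(r-1)$-subsets; the paper leaves this point implicit.
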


\begin{proof}
Assume $J(n,r)\cong \TS{j}{H}$ for some graph $H$.
By \cref{lem:camera-JnR-max-stars-stable}, the Johnson clique number is
\[
q:=\omega(J(n,r))=n-r+1.
\]
Because $j\le n-r-1$, we have $q\ge j+2$, so \cref{prop:camera-TSj-max-clique-divisibility} applies.
Again by \cref{lem:camera-JnR-max-stars-stable}, every maximum clique of $J(n,r)$ is a Johnson star clique and is uniquely determined by an $(r-1)$-subset of $[n]$, so the number of maximum cliques of $J(n,r)$ is
\[
\binom{n}{r-1}.
\]
Applying \cref{prop:camera-TSj-max-clique-divisibility} gives the claimed divisibility.
\end{proof}

The next proposition gives a second divisibility condition based on the incidence of maximum cliques.
\begin{proposition}\label{prop:camera-TSj-max-clique-incidence-divisibility}
Let $H$ be a graph, let $j\ge 2$, and let $G$ be a graph with a fixed isomorphism to $\TS{j}{H}$.
Write $q=\omega(G)$.
Assume $q\ge j+2$, and suppose that every vertex of $G$ lies in exactly $t$ maximum cliques of $G$.
Then
\[
j\mid t.
\]
\end{proposition}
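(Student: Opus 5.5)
Fix a vertex $A$ of $G$, viewed as a $j$-clique of $H$, and count the maximum cliques of $G$ containing $A$ through the $H$-side description. By the argument in the proof of \cref{prop:camera-TSj-max-clique-divisibility}, since $q\ge j+2>j+1$, \cref{lem:clique-structure} forces every maximum clique $Q$ of $G$ into case~(2). So $Q$ corresponds bijectively to a pair $(K,\Int)$, where $K$ is a $(q+j-1)$-clique of $H$, $\Int\subseteq V(K)$ has size $j-1$, and the vertices of $Q$ are the sets $\Int+a$ with $a\in V(K)\setminus\Int$.

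The clique $Q$ contains $A$ exactly when $\Int\subseteq A\subseteq V(K)$. Hence the maximum cliques through $A$ are in bijection with the pairs $(K,\Int)$ such that $K$ is a $(q+j-1)$-clique of $H$ containing $A$ and $\Int$ is a $(j-1)$-subset of $A$. For a fixed $K\supseteq A$ there are exactly $\binom{j}{j-1}=j$ choices of $\Int$, namely $A-a$ for $a\in A$. The different choices give different cliques $Q$, because $\Int$ is recovered as the intersection of the vertices of $Q$ and $K$ as their union.

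It follows that
\[
t=j\cdot \kappa_A ,
\]
where $\kappa_A$ is the number of $(q+j-1)$-cliques of $H$ containing $A$. In particular $j\mid t$.

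The only step needing care is the bijection: every size-$q$ clique of $\TS{j}{H}$ must have this form, and every pair $(K,\Int)$ must give a clique that is genuinely of size $q=\omega(G)$. Both were already established in \cref{prop:camera-TSj-max-clique-divisibility}, so I would simply reuse that correspondence and restrict it to the cliques containing $A$. The hypothesis that every vertex lies in exactly $t$ maximum cliques is then used only by taking any vertex $A$.
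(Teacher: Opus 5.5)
Your proof is correct and follows essentially the same route as the paper's: both use $q>j+1$ to force case~(2) of the clique-structure lemma, parametrize the maximum cliques through the fixed vertex by a $(q+j-1)$-clique $K\supseteq A$ of $H$ together with one of the $j$ choices $\Int=A-a$, and conclude $t=j\cdot\kappa_A$. Your explicit injectivity check, recovering $\Int$ as the intersection and $K$ as the union of the represented $j$-cliques, is left implicit in the paper's proof of this proposition and is only spelled out later, in the stable proper-divisor exclusion.
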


\begin{proof}
Use the fixed isomorphism to regard the vertices of $G$ as the $j$-cliques of $H$.
Fix a vertex $X$ of $G$, and let $Q$ be the corresponding $j$-clique of $H$.
Let $\lambda_Q$ denote the number of $(q+j-1)$-cliques of $H$ containing $Q$.
Because $q>j+1$, case~(1) of \cref{lem:clique-structure} cannot occur for maximum cliques of $G$.
Hence, every maximum clique of $G$ containing $X$ is obtained by choosing a $(q+j-1)$-clique $K$ of $H$ containing $Q$, choosing a $(j-1)$-subset $\Int\subset Q$, and then taking the $q$ vertices
\[
\{\Int+a:a\in V(K)\setminus \Int\}
\]
of $\TS{j}{H}$.
Conversely, every such choice produces a maximum clique containing $X$.
Thus, the number of maximum cliques of $G$ containing $X$ is exactly
\[
\binom{j}{j-1}\lambda_Q=j\lambda_Q.
\]
By hypothesis this number is $t$, so $t=j\lambda_Q$, and therefore $j\mid t$.
\end{proof}

We now specialize this incidence condition to stable Johnson graphs.
\begin{corollary}\label{cor:camera-johnson-nondivisor-exclusion}
Let $n>2r\ge 4$, and let $j$ satisfy $2\le j\le r-1$.
If $j\in \KTS(J(n,r))$, then
\[
j\mid r.
\]
\end{corollary}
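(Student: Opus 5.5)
The plan is to specialize \cref{prop:camera-TSj-max-clique-incidence-divisibility} to $G=J(n,r)$, with the stable-range clique data supplied by \cref{lem:camera-JnR-max-stars-stable}. Assume $J(n,r)\cong \TS{j}{H}$ for some graph $H$, where $n>2r\ge 4$ and $2\le j\le r-1$. Fix such an isomorphism, so that $G=J(n,r)$ is regarded as $\TS{j}{H}$.

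The first step is to read off the two parameters needed by the proposition. By \cref{lem:camera-JnR-max-stars-stable}, the maximum cliques of $J(n,r)$ are the Johnson stars, so
\[
q=\omega(J(n,r))=n-r+1.
\]
The same lemma says that every vertex lies in exactly $t=r$ maximum cliques. This is consistent with the star description: a vertex $A$ lies in the star with core $S$ exactly when $S\subseteq A$, and $A$ has $\binom{r}{r-1}=r$ subsets of size $r-1$.

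The second step is to verify the hypothesis $q\ge j+2$ of \cref{prop:camera-TSj-max-clique-incidence-divisibility}. From $n>2r$ we get $n-r\ge r+1$, hence $q=n-r+1\ge r+2$. Since $j\le r-1$, this gives $q\ge j+3>j+2$. The proposition therefore applies with $t=r$ and yields $j\mid r$, which is the claim.

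I expect no real obstacle, since the argument only checks hypotheses and substitutes values. The one point needing care is that the proposition requires every vertex to lie in exactly $t$ maximum cliques. This uniform count is supplied by \cref{lem:camera-JnR-max-stars-stable}, which in turn depends on the stable-range fact that top cliques, of size $r+1<n-r+1$, are never maximum.
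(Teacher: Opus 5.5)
Your proposal is correct and follows the paper's own argument: both read off $q=n-r+1$ and $t=r$ from \cref{lem:camera-JnR-max-stars-stable}, check that $q\ge r+2>j+1$, and then apply \cref{prop:camera-TSj-max-clique-incidence-divisibility} to conclude $j\mid r$.
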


\begin{proof}
Assume $J(n,r)\cong \TS{j}{H}$ for some graph $H$.
By \cref{lem:camera-JnR-max-stars-stable}, the Johnson clique number is
\[
q:=\omega(J(n,r))=n-r+1,
\]
and every maximum clique is a Johnson star clique.
Each vertex of $J(n,r)$ lies in exactly $r$ such maximum cliques, one for each $(r-1)$-subset of the represented $r$-set.
Moreover,
\[
q=n-r+1\ge r+2>j+1,
\]
so \cref{prop:camera-TSj-max-clique-incidence-divisibility} applies with $t=r$.
Hence, $j\mid r$.
\end{proof}

Combining the previous obstructions yields the following stable reduction.
\begin{theorem}\label{thm:camera-johnson-stable-general-reduction}
Let $n>2r\ge 4$, and let
\[
j\in \KTS(J(n,r))\setminus\{1,r,n-r\}.
\]
Then
\[
3\le j\le r-1
\qquad\text{and}\qquad
j\mid r.
\]
\end{theorem}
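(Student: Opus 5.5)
The plan is to combine the clique-number formula of \cref{lem:clique-number-formula} with \cref{thm:camera-johnson-no-TS2}, \cref{cor:camera-johnson-general-max-clique-divisibility}, and \cref{cor:camera-johnson-nondivisor-exclusion}. Fix $j\in\KTS(J(n,r))\setminus\{1,r,n-r\}$ and a graph $H$ with $J(n,r)\cong\TS{j}{H}$. Then $j\ge 2$.

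\textbf{Upper range.} Since $J(n,r)$ has edges, \cref{lem:clique-number-formula} forces $j<\omega(H)$. It also gives
\[
n-r+1=\omega(J(n,r))=\max\{j+1,\,\omega(H)-j+1\},
\]
where the left side uses \cref{lem:camera-JnR-max-stars-stable}. Hence $j+1\le n-r+1$, so $j\le n-r$. Since $j\ne n-r$, we get $j\le n-r-1$.

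\textbf{Lower range.} I will show $j\le r-1$. Suppose instead that $r+1\le j\le n-r-1$. Then \cref{cor:camera-johnson-general-max-clique-divisibility} gives
\[
\binom{n-r+j}{j-1}\ \Big|\ \binom{n}{r-1}.
\]
This is impossible on size grounds. Since $j-1\ge r$, $n-r+j\ge n+1$, and $j-1\le (n-r+j)/2$ (because $j\le n-r-1$), unimodality of binomial coefficients gives
\[
\binom{n-r+j}{j-1}\ \ge\ \binom{n+1}{r}\ >\ \binom{n}{r-1}.
\]
The first inequality follows because increasing the top index increases the coefficient, and moving the bottom index from $r$ toward the middle does not decrease it. The second follows because $\binom{n+1}{r}=\binom{n}{r}+\binom{n}{r-1}$. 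A positive integer cannot be divisible by a strictly larger one, so this case is excluded. Combined with $j\ne r$, we get $2\le j\le r-1$.

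\textbf{Finishing.} Now \cref{cor:camera-johnson-nondivisor-exclusion} applies and gives $j\mid r$. It remains to exclude $j=2$. If $r\ge 3$ and $n-r\ge 3$, this is exactly \cref{thm:camera-johnson-no-TS2}. Here $n>2r$ gives $n-r>r\ge 3$, and $r\ge 3$ holds whenever $2\le r-1$. If $r=2$, then $j\le r-1=1$ contradicts $j\ge 2$, so this case cannot occur. Hence $3\le j\le r-1$ and $j\mid r$.

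The main obstacle is the binomial inequality in the lower-range step. I will check it carefully, especially the unimodality argument near the middle of the row. If it proves awkward, I will instead compare the two coefficients directly through a ratio.
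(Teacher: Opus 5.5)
Your proof is correct and follows the paper's argument step for step. The clique-number formula gives $j\le n-r-1$; \cref{cor:camera-johnson-general-max-clique-divisibility} excludes $j\ge r+1$ by a size comparison; \cref{thm:camera-johnson-no-TS2} removes $j=2$; and \cref{cor:camera-johnson-nondivisor-exclusion} yields $j\mid r$. The only cosmetic difference is the binomial estimate: you use unimodality in the lower index plus Pascal's rule, whereas the paper rewrites $\binom{n-r+j}{j-1}=\binom{n-r+j}{n-r+1}\ge\binom{n+1}{r}$ and uses $\binom{n+1}{r}/\binom{n}{r-1}=(n+1)/r>1$. Both versions are valid.
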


\begin{proof}
Because $n>2r$, we have $\omega(J(n,r))=n-r+1$.
Choose $H$ with $J(n,r)\cong\TS{j}{H}$.
The graph $J(n,r)$ has edges, so the no-vertex and edgeless cases of \cref{lem:clique-number-formula} do not apply.
Hence, $j<\omega(H)$, and the TS clique-number formula gives $j\le n-r$.
Excluding the endpoint value $j=n-r$ yields
\[
2\le j\le n-r-1.
\]
Suppose first that $j\ge r+1$.
Then \cref{cor:camera-johnson-general-max-clique-divisibility} gives
\[
\binom{n-r+j}{j-1}\mid \binom{n}{r-1}.
\]
But
\[
\binom{n-r+j}{j-1}=\binom{n-r+j}{n-r+1}.
\]
Since $j\ge r+1$, we have $n-r+j\ge n+1$, so monotonicity in the upper argument gives
\[
\binom{n-r+j}{n-r+1}\ge \binom{n+1}{n-r+1}=\binom{n+1}{r}.
\]
Moreover,
\[
\frac{\binom{n+1}{r}}{\binom{n}{r-1}}=\frac{n+1}{r}>1,
\]
so
\[
\binom{n-r+j}{j-1}>\binom{n}{r-1},
\]
contradicting the divisibility above.
Therefore,
\[
j\le r-1.
\]
Since $2\le j\le r-1$, the case $r=2$ is impossible.
Thus, in the remaining argument we may assume $r\ge 3$.
If $j=2$, then
\[
3\le r\le n-r\le n-3,
\]
so \cref{thm:camera-johnson-no-TS2} excludes that possibility.
Hence,
\[
3\le j\le r-1.
\]
Now \cref{cor:camera-johnson-nondivisor-exclusion} applies and yields $j\mid r$.
\end{proof}

The next lemma gives the local counts needed in the stable argument.
\begin{lemma}\label{lem:camera-JnR-stable-local-counts}
Let $n>2r\ge 4$.
\begin{enumerate}[(1)]
\item If $A$ and $B$ are adjacent vertices of $J(n,r)$, then $A$ and $B$ have exactly $n-2$ common neighbors.
\item If $A,B,C$ are three distinct vertices contained in one maximum clique of $J(n,r)$, then $A,B,C$ have exactly $n-r-2$ common neighbors.
\end{enumerate}
\end{lemma}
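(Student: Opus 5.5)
Both parts are direct counts in the set model: vertices of $J(n,r)$ are $r$-subsets of $[n]$, and two of them are adjacent exactly when they meet in $r-1$ elements. The hypothesis $n>2r\ge 4$ is used only in part~(2), through \cref{lem:camera-JnR-max-stars-stable}.

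\textbf{Part (1).} Write $A=S+a$ and $B=S+b$, where $S=A\cap B$ has size $r-1$ and $a\ne b$. A common neighbor $C$ of $A$ and $B$ satisfies $|C\cap A|=|C\cap B|=r-1$ and $C\notin\{A,B\}$. I will split according to whether $S\subseteq C$.

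\begin{itemize}
\item If $S\subseteq C$, then $C=S+c$ with $c\notin S\cup\{a,b\}$. This gives $n-r-1$ choices.
\item If $S\not\subseteq C$, then $C$ misses exactly one element $s\in S$, since $|C\cap A|=r-1$. To have $|C\cap A|=r-1$ we also need $a\in C$, and likewise $b\in C$. So $C=(S-s)+a+b$, giving $r-1$ choices.
\end{itemize}

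The total is $(n-r-1)+(r-1)=n-2$, which is the standard value $\lambda=n-2$ for Johnson graphs.

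\textbf{Part (2).} By \cref{lem:camera-JnR-max-stars-stable}, the three vertices lie in a Johnson star with core $S$ of size $r-1$. So $A=S+a$, $B=S+b$, $C=S+c$ with $a,b,c$ distinct. Let $D$ be a common neighbor, and again split according to whether $S\subseteq D$.

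\begin{itemize}
\item If $S\subseteq D$, then $D=S+d$ with $d\notin S\cup\{a,b,c\}$. This gives $n-(r-1)-3=n-r-2$ choices.
\item If $S\not\subseteq D$, then, exactly as in part~(1), $D$ must contain $a$, $b$ and $c$ while missing exactly one element of $S$. Then $|D|\ge (r-2)+3=r+1$, which is impossible.
\end{itemize}

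Hence there are exactly $n-r-2$ common neighbors.

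There is no real obstacle here. The only care needed is in the case split: the condition $|D\cap A|=r-1$ forces $D$ to miss at most one element of $A$. Since every common neighbor meets each of $A,B,C$ in $r-1$ elements, the two cases above are exhaustive and each neighbor is counted once.
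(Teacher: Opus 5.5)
Your proof is correct and follows essentially the same route as the paper. In part~(1) you make the same case split on whether the common neighbor contains the core $S$, and in part~(2) you use the same appeal to \cref{lem:camera-JnR-max-stars-stable} followed by the size contradiction $|D|\ge r+1$ when $S\not\subseteq D$.
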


\begin{proof}
For (1), write
\[
A=S\cup\{a\},\qquad B=S\cup\{b\},\qquad |S|=r-1,
\]
with $a\ne b$ and $a,b\notin S$.
A common neighbor $X$ of $A$ and $B$ must satisfy $|X\cap A|=|X\cap B|=r-1$.
There are exactly two possibilities.
First, $X$ may contain all of $S$.
Then $X=S\cup\{d\}$ where
\[
d\in [n]\setminus(S\cup\{a,b\}),
\]
giving $n-r-1$ choices.
Second, $X$ may omit exactly one element of $S$; then to stay adjacent to both $A$ and $B$, it must contain both $a$ and $b$, so
\[
X=(S\setminus\{s\})\cup\{a,b\}
\]
for some $s\in S$, giving $r-1$ choices.
Thus, $A$ and $B$ have exactly
\[
(n-r-1)+(r-1)=n-2
\]
common neighbors.

For (2), by \cref{lem:camera-JnR-max-stars-stable}, every maximum clique of $J(n,r)$ is a Johnson star clique.
Hence, we may write
\[
A=S\cup\{a\},\qquad B=S\cup\{b\},\qquad C=S\cup\{c\},\qquad |S|=r-1,
\]
with $a,b,c$ pairwise distinct and outside $S$.
Let $X$ be a common neighbor of $A,B,C$.
If $X$ omitted some element $s\in S$, then adjacency to each of $A,B,C$ would force $X$ to contain all of $a,b,c$, giving at least $r+1$ elements, impossible.
Therefore, $S\subseteq X$, and then necessarily
\[
X=S\cup\{d\}
\]
for some $d\in [n]\setminus(S\cup\{a,b,c\})$.
Conversely every such $X$ is adjacent to all three vertices.
Hence, $A,B,C$ have exactly
\[
n-(r-1)-3=n-r-2
\]
common neighbors.
\end{proof}

We now exclude the remaining proper divisors in the stable range.
This final stable argument combines the same maximum-clique information with the local common-neighbor counts from \cref{lem:camera-JnR-stable-local-counts}.
\begin{theorem}\label{thm:camera-johnson-stable-proper-divisor-exclusion}
Let $n>2r\ge 4$, and let $j$ satisfy
\[
2\le j\le r-1
\qquad\text{and}\qquad
j\mid r.
\]
Then
\[
j\notin \KTS(J(n,r)).
\]
\end{theorem}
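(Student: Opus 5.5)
The plan is to compare common-neighbor counts of triangles on both sides; Lemma~\ref{lem:camera-JnR-stable-local-counts} already does the star-type count on the Johnson side. Suppose, for contradiction, that $J(n,r)\cong\TS{j}{H}$ for some graph $H$, with $2\le j\le r-1$ (the hypothesis $j\mid r$ will not be needed). Since $J(n,r)$ has edges, $\TS{j}{H}$ has an edge $AB$. Then $C:=A\cup B$ is a $(j+1)$-clique of $H$, and its $j+1$ subsets of size $j$ form a clique of size $j+1\ge 3$ in $\TS{j}{H}$. Choose three of them, $C-x$, $C-y$, $C-z$, with $x,y,z\in C$ distinct. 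I will compute the number of common neighbors of these three vertices in $\TS{j}{H}$, then do the same for an arbitrary triangle of $J(n,r)$, and show the two counts cannot agree.

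\emph{Count in $\TS{j}{H}$.} Let $X$ be a common neighbor. If $X\subseteq C$, then $X$ is one of the remaining $j-2$ subsets $C-w$ with $w\notin\{x,y,z\}$. Each of these is indeed adjacent to all three, since two $j$-subsets of the clique $C$ differ by exchanging two adjacent vertices. If $X\not\subseteq C$, then adjacency to $C-x$ forces $|X\cap(C-x)|=j-1$, so $|X\cap C|= j-1$ and $X\cap C\subseteq C-x$. By symmetry $X\cap C\subseteq C-\{x,y,z\}$, which has size only $j-2$, a contradiction. Hence the three vertices have exactly $j-2$ common neighbors.

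\emph{Count in $J(n,r)$.} By \cref{lem:clique-structure} applied to the Johnson graph $J(n,r)=\TS{r}{K_n}$, every triangle of $J(n,r)$ is either star-type or top-type. In the star-type case, \cref{lem:camera-JnR-stable-local-counts}(2) gives $n-r-2$ common neighbors. Strictly that lemma is stated for triples inside a maximum clique, but its proof uses only the form $S+a$, $S+b$, $S+c$, so it applies to any star-type triple. In the top-type case the triple is $U-x$, $U-y$, $U-z$ for an $(r+1)$-set $U$. The same argument as above, with $K_n$ in place of $H$, gives exactly $r-2$ common neighbors. Since the isomorphism carries the triangle $\{C-x,C-y,C-z\}$ to some triangle of $J(n,r)$, we get $j-2\in\{n-r-2,\ r-2\}$, that is, $j\in\{n-r,r\}$. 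But $j\le r-1<r<n-r$, which is the desired contradiction.

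The main point to verify carefully is the exhaustive case analysis for common neighbors, in particular that no common neighbor leaves $C$ (respectively $U$). The second delicate point is that \cref{lem:clique-structure} really does classify every triangle of $J(n,r)$ as star-type or top-type. It does, since $J(n,r)=\TS{r}{K_n}$ and the lemma applies with $q=3$.
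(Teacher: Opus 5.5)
Your proof is correct, and it takes a genuinely different and much shorter route than the paper. The key observation is that three members $C-x,C-y,C-z$ of a top clique of $\TS{j}{H}$ have exactly $j-2$ common neighbors whatever $H$ is: a common neighbor $X\not\subseteq C$ would need $X\cap C$, of size $j-1$, to lie in $C-\{x,y,z\}$, of size $j-2$. You compare this witness-independent invariant with the only two triangle counts available in $J(n,r)$, namely $r-2$ (top-type) and $n-r-2$ (star-type), and this forces $j\in\{r,n-r\}$.

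The paper instead organizes its argument around the star-type maximum cliques of the witness. It uses maximum-clique incidences to show that every $j$-clique of $H$ lies in exactly $\lambda=r/j$ blocks (the $(n-r+j)$-cliques of $H$), and that every $(j+1)$-clique lies in exactly one block. It then feeds both parts of \cref{lem:camera-JnR-stable-local-counts} into two claims about common neighbors outside a fixed block. Finally it double counts the outside common neighbors of a $j$-subset of that block against the degree formula of \cref{lem:camera-ts-degree-divisibility}, reaching $j(\lambda-1)\le\lambda-1$. That route extracts more structural information about a hypothetical witness and reuses the divisibility machinery of the surrounding results. However, it depends on $j\mid r$ (to make $\lambda$ a uniform integer) and on the maximum-clique classification in \cref{lem:camera-JnR-max-stars-stable}.

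Your argument needs only the star/top dichotomy for triangles of $J(n,r)$ and never uses $j\mid r$. In fact it excludes every $j\ge 2$ outside $\{r,n-r\}$ for every $J(n,r)$ with $n\ge 2r\ge 4$; when $n=2r$ both counts equal $r-2$. So it would subsume \cref{thm:camera-johnson-no-TS2,thm:camera-johnson-stable-general-reduction,thm:camera-johnson-boundary-general-exclusion} in one stroke.

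One small remark: in the stable range every star-type triple $S+a,S+b,S+c$ lies in the Johnson star with core $S$. That star is a maximum clique by \cref{lem:camera-JnR-max-stars-stable}, so \cref{lem:camera-JnR-stable-local-counts}(2) applies verbatim and your extension of its statement is not needed.
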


\begin{proof}
Suppose to the contrary that
\[
J(n,r)\cong \TS{j}{H}
\]
for some graph $H$, where $n>2r\ge 4$ and $j$ is a proper divisor of $r$.
Write
\[
q:=\omega(J(n,r))=n-r+1.
\]
Because $j\le r/2$ and $n>2r$, we have
\[
q=n-r+1>r+1>j+1.
\]
Let $\mathcal B$ be the family of all $(q+j-1)=(n-r+j)$-cliques of $H$.

Fix a vertex $Q$ of $\TS{j}{H}$, viewed as a $j$-clique of $H$, and let $\lambda_Q$ be the number of members of $\mathcal B$ that contain $Q$.
Because $q>j+1$, case~(1) of \cref{lem:clique-structure} cannot occur for maximum cliques.
Hence, every maximum clique containing $Q$ is obtained by choosing a member of $\mathcal B$ containing $Q$ together with a $(j-1)$-subset of $Q$, and conversely every such choice produces a maximum clique containing $Q$.
This parametrization does not overcount: for a second-type maximum clique, the core is the intersection of its represented $j$-cliques and the block is their union.
Therefore, the number of maximum cliques containing $Q$ is exactly
\[
\binom{j}{j-1}\lambda_Q=j\lambda_Q.
\]
By \cref{lem:camera-JnR-max-stars-stable}, every vertex of $J(n,r)$ lies in exactly $r$ maximum cliques.
Hence
\[
r=j\lambda_Q.
\]
Thus, every $j$-clique of $H$ lies in exactly
\[
\lambda:=\frac{r}{j}
\]
members of $\mathcal B$.

Next fix an edge $QQ'$ of $\TS{j}{H}$.
Then $Q$ and $Q'$ are adjacent $j$-cliques, so they share a $(j-1)$-clique and their union
\[
R:=Q\cup Q'
\]
is a $(j+1)$-clique of $H$.
Since $q>j+1$, case~(1) of \cref{lem:clique-structure} cannot occur for a maximum clique of $\TS{j}{H}$.
Therefore, every maximum clique of $\TS{j}{H}$ is of the second type in \cref{lem:clique-structure}.
A maximum clique of $\TS{j}{H}$ containing both $Q$ and $Q'$ must then use the common $(j-1)$-set as core.
Conversely, each member of $\mathcal B$ that contains $R$ with this common core gives one such maximum clique, and distinct members give distinct cliques because the block is recovered as the union of the represented $j$-cliques.
Hence, the number of such maximum cliques is exactly the number of members of $\mathcal B$ that contain $R$.
In $J(n,r)$, every edge lies in exactly one maximum clique, namely the unique Johnson star clique determined by the common $(r-1)$-subset of the two adjacent $r$-sets.
Since every $(j+1)$-clique of $H$ is the union of two adjacent $j$-subcliques, the preceding edge argument applies to an arbitrary $(j+1)$-clique.
Therefore, every $(j+1)$-clique of $H$ lies in exactly one member of $\mathcal B$.

Fix one block $B\in\mathcal B$.
We first determine the outside common neighbors of the larger subsets of $B$.

\begin{claim}\label{clm:camera-johnson-stable-proper-divisor-exclusion-outside-common-neighbors}
	Every $(j+2)$-subset of $B$ has no common neighbor outside $B$.
\end{claim}
\begin{proof}
Let
\[
R=T\cup\{a,b,c\}\subseteq B, \qquad |T|=j-1.
\]
Consider the three $j$-cliques
\[
T+a,\ T+b,\ T+c,
\]
which form three vertices of $\TS{j}{H}$.
Because $B$ is a clique, these three vertices lie in one maximum clique of $\TS{j}{H}$ with core $T$, hence they correspond under the isomorphism to three vertices of a maximum clique of $J(n,r)$.
By \cref{lem:camera-JnR-max-stars-stable}, every maximum clique of $J(n,r)$ is a Johnson star clique of size $n-r+1$, so there are distinct elements $\alpha,\beta,\gamma\in [n]\setminus S$ such that the corresponding Johnson vertices are $S\cup\{\alpha\}$, $S\cup\{\beta\}$, and $S\cup\{\gamma\}$, where $|S|=r-1$.
By \cref{lem:camera-JnR-stable-local-counts}, they have exactly $n-r-2$ common neighbors in $J(n,r)$.

In $\TS{j}{H}$, let $X$ be a common neighbor of $T+a$, $T+b$, and $T+c$.
We claim that $T\subseteq X$.
Otherwise, let $t\in T\setminus X$.
Since $X$ is adjacent to each of $T+a$, $T+b$, and $T+c$, the set $X$ must then contain $a$, $b$, and $c$.
If moreover $X$ contains a vertex outside $R$, then $X$ contains
\[
(T\setminus\{t\})\cup\{a,b,c\},
\]
which already has size $(j-2)+3=j+1$, impossible because $|X|=j$.
Hence, $X\subseteq R$, and therefore $X=(T\setminus\{t\})\cup\{a,b,c\}$, which again has size $j+1$, a contradiction.
Thus, $T\subseteq X$, and since $|X|=j$, we have $X=T+x$ for some vertex $x\notin T$.
Because $X$ is adjacent to each of $T+a$, $T+b$, and $T+c$, the vertex $x$ is adjacent in $H$ to every vertex of
\[
R=T\cup\{a,b,c\}.
\]
Conversely, every vertex $x$ adjacent to all vertices of $R$ yields the common neighbor $T+x$.
Since $B$ is a clique of size $n-r+j$, exactly
\[
|B\setminus R|=(n-r+j)-(j+2)=n-r-2
\]
such vertices lie inside $B$.
Comparing with the Johnson count above, there are no vertices outside $B$ adjacent to all $j+2$ vertices of $R$.
\cref{clm:camera-johnson-stable-proper-divisor-exclusion-outside-common-neighbors} follows.
\end{proof}

An immediate consequence of \cref{clm:camera-johnson-stable-proper-divisor-exclusion-outside-common-neighbors} is that every outside vertex $x\in V(H)\setminus B$ satisfies
\[
|N_H(x)\cap B|\le j+1.
\]
Indeed, if $x$ had at least $j+2$ neighbors in $B$, then some $(j+2)$-subset of $B$ would have $x$ as an outside common neighbor.

\begin{claim}\label{clm:camera-johnson-stable-proper-divisor-exclusion-outside-common-neighbors-2}
Every $(j+1)$-subset of $B$ has exactly $r-j$ common neighbors outside $B$.
\end{claim}
\begin{proof}
Now fix a $(j+1)$-subset $R\subseteq B$.
Pick an edge of $\TS{j}{H}$ corresponding to two different $j$-subsets of $R$.
By \cref{lem:camera-JnR-stable-local-counts}, the corresponding edge of $J(n,r)$ has exactly $n-2$ common neighbors.
Write the chosen edge as $(T+a)(T+b)$, where $R=T\cup\{a,b\}$ and $|T|=j-1$.
Let $X$ be a common neighbor of $T+a$ and $T+b$ in $\TS{j}{H}$.
If $X\subseteq R$, then $X$ is a $j$-subset of $R$ different from $T+a$ and $T+b$, and hence $X$ is one of the other $j-1$ $j$-subsets of $R$.
Assume next that $X\nsubseteq R$.
We claim that $T\subseteq X$.
Otherwise, let $t\in T\setminus X$.
Since $X$ is adjacent to both $T+a$ and $T+b$, the set $X$ must contain both $a$ and $b$.
As $X$ also contains some vertex outside $R$, it then contains
\[
(T\setminus\{t\})\cup\{a,b\}
\]
plus one additional vertex outside $R$, and therefore has size at least $(j-2)+2+1=j+1$, impossible.
Thus, $T\subseteq X$.
Since $X\nsubseteq R$ and $|X|=j$, we obtain $X=T+x$ for a unique vertex $x\notin R$.
Now $X$ is adjacent to both $T+a$ and $T+b$ if and only if $x$ is adjacent in $H$ to every vertex of
\[
R=T\cup\{a,b\}.
\]
Therefore, the common neighbors of $T+a$ and $T+b$ in $\TS{j}{H}$ are exactly the other $j-1$ $j$-subsets of $R$, together with one vertex $T+x$ for each common neighbor $x$ of all $j+1$ vertices of $R$ in $H$.
Therefore, this $(j+1)$-clique $R$ has exactly
\[
(n-2)-(j-1)=n-j-1
\]
common neighbors in $H$.
Since exactly
\[
|B\setminus R|=(n-r+j)-(j+1)=n-r-1
\]
of these lie inside $B$, the number of common neighbors of $R$ outside $B$ is
\[
(n-j-1)-(n-r-1)=r-j.
\]
\cref{clm:camera-johnson-stable-proper-divisor-exclusion-outside-common-neighbors-2} follows.
\end{proof}

Now fix a $j$-subset $Q\subseteq B$.
For each vertex $y\in B\setminus Q$, let
\[
R_y:=Q\cup\{y\}.
\]
Then the sets $R_y$ are exactly the $(j+1)$-subsets of $B$ containing $Q$, so there are precisely
\[
|B\setminus Q|=(n-r+j)-j=n-r
\]
of them.
By \cref{clm:camera-johnson-stable-proper-divisor-exclusion-outside-common-neighbors-2}, each $R_y$ has exactly $r-j$ common neighbors outside $B$.
Moreover, no outside vertex can be counted for two different sets $R_y$ and $R_z$ with $y\neq z$. Indeed, such a vertex would be adjacent to all vertices of
\[
Q\cup\{y,z\},
\]
which is a $(j+2)$-subset of $B$, contradicting \cref{clm:camera-johnson-stable-proper-divisor-exclusion-outside-common-neighbors}.
Therefore, the number of vertices outside $B$ adjacent to all $j$ vertices of $Q$ is at least
\[
(r-j)(n-r).
\]
On the other hand, the vertex of $\TS{j}{H}$ corresponding to $Q$ has degree $r(n-r)$ in $J(n,r)$, so by \cref{lem:camera-ts-degree-divisibility} the $j$-clique $Q$ lies in exactly
\[
\frac{r(n-r)}{j}=\lambda(n-r)
\]
$(j+1)$-cliques of $H$, equivalently it has exactly $\lambda(n-r)$ common neighbors in $H$.
Here the equivalence is bijective: a common neighbor $x$ gives the $(j+1)$-clique $Q+x$, and every $(j+1)$-clique containing $Q$ contributes its unique vertex outside $Q$.
Of these, exactly $n-r$ lie inside $B$, namely the vertices of $B\setminus Q$.
So the number of common neighbors of $Q$ outside $B$ is exactly
\[
\lambda(n-r)-(n-r)=(\lambda-1)(n-r).
\]
Comparing with the lower bound above gives
\[
(r-j)(n-r)\le (\lambda-1)(n-r).
\]
Since $n-r>0$ and $r=j\lambda$, this simplifies to
\[
j(\lambda-1)\le \lambda-1.
\]
But $j\ge 2$ and $\lambda=r/j\ge 2$, so $j(\lambda-1)>\lambda-1$, a contradiction.
Therefore,
\[
j\notin \KTS(J(n,r)).
\]
\end{proof}

This corollary yields the complete stable TS formula for Johnson graphs.
\begin{corollary}\label{cor:camera-johnson-stable-all}
For every pair of integers $n>2r\ge 4$,
\[
\KTS(J(n,r))=\{1,r,n-r\}.
\]
\end{corollary}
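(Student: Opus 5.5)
The plan is to prove the two inclusions separately. Both will be assembled almost entirely from \cref{thm:camera-johnson-stable-general-reduction} and \cref{thm:camera-johnson-stable-proper-divisor-exclusion}, together with explicit witnesses.

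For the inclusion $\{1,r,n-r\}\subseteq\KTS(J(n,r))$, I will exhibit a witness for each value. The value $1$ is feasible because $\TS{1}{J(n,r)}\cong J(n,r)$. For $r$, I will use $\TS{r}{K_n}\cong J(n,r)$, as recorded in the preliminaries. For $n-r$, I will use $\TS{n-r}{K_n}\cong J(n,n-r)\cong J(n,r)$ via Johnson duality. Since $n>2r$, the value $n-r$ differs from $r$; this does not matter for membership, but it confirms the set really has three elements.

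For the reverse inclusion, I will take any $j\in\KTS(J(n,r))\setminus\{1,r,n-r\}$. \cref{thm:camera-johnson-stable-general-reduction} then gives $3\le j\le r-1$ and $j\mid r$. In particular $2\le j\le r-1$ with $j\mid r$, which is exactly the hypothesis of \cref{thm:camera-johnson-stable-proper-divisor-exclusion}. That theorem yields $j\notin\KTS(J(n,r))$, a contradiction. Hence $\KTS(J(n,r))\subseteq\{1,r,n-r\}$.

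There is no real obstacle at this stage, since all the substantive work was done in the reduction theorem and the proper-divisor exclusion. The only points to check are, first, that the hypothesis $n>2r\ge 4$ of the corollary matches those of both cited theorems, and second, the degenerate case $r=2$. In that case the reduction already leaves no admissible $j$, because $3\le j\le 1$ is empty, so the argument goes through uniformly.
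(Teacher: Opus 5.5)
Your proposal is correct and matches the paper's proof essentially step for step. For $\{1,r,n-r\}\subseteq\KTS(J(n,r))$ you use the trivial level $1$, the witness $\TS{r}{K_n}$, and $\TS{n-r}{K_n}$ via Johnson duality; for the reverse inclusion you apply \cref{thm:camera-johnson-stable-general-reduction} and then \cref{thm:camera-johnson-stable-proper-divisor-exclusion}, and your remark that $r=2$ leaves no admissible $j$ agrees with the reduction theorem itself.
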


\begin{proof}
The inclusion $\{1,r,n-r\}\subseteq \KTS(J(n,r))$ follows from the trivial level $1$, from the complete-graph witness $\TS{r}{K_n}\cong J(n,r)$, and from Johnson duality $J(n,n-r)\cong J(n,r)$ together with the witness $\TS{n-r}{K_n}\cong J(n,n-r)$.
Conversely, let
\[
j\in \KTS(J(n,r))\setminus\{1,r,n-r\}.
\]
By \cref{thm:camera-johnson-stable-general-reduction}, $j$ is a proper divisor of $r$ with $3\le j\le r-1$.
This conclusion contradicts \cref{thm:camera-johnson-stable-proper-divisor-exclusion}.
Hence, no such $j$ exists.
\end{proof}

We next describe the neighborhood layers in $\sfTS_j$ graphs.
These lemmas mark the shift to the boundary method: the remaining argument will compare the layered neighborhoods of a $\sfTS_j$ witness with the rook-graph neighborhoods of $J(2r,r)$.
A concrete instance of these layers is shown in \cref{fig:ts-neighborhood-layers}.
\begin{lemma}\label{lem:camera-TSj-neighborhood-layers}
Let $H$ be a graph, let $j\ge 2$, and let $G$ be a graph with a fixed isomorphism $\phi:G\to\TS{j}{H}$.
Let $Q=\{a_1,\dots,a_j\}$ be a $j$-clique of $H$, and put $x_Q:=\phi^{-1}(Q)$.
Put
\[
S:=\bigcap_{i=1}^j N_H(a_i),\qquad X:=H[S].
\]
Then, under the fixed identification by $\phi$, $N_G(x_Q)$ is obtained from $j$ disjoint copies $X_1,\dots,X_j$ of $X$ by adding, for each $s\in S$, all edges of the transversal clique $s_1\cdots s_j$ on the copies of $s$.
\end{lemma}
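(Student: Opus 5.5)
We prove the description of $N_G(x_Q)$ by working entirely inside $\TS{j}{H}$ via $\phi$, generalizing the argument of \cref{lem:camera-TS2-neighborhood-prism}. The plan has two parts. First, determine the vertex set of the neighborhood and split it into $j$ labelled layers. Second, compute adjacency within a layer and between layers.

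\textbf{Vertex set.} A neighbor $B$ of $Q$ in $\TS{j}{H}$ satisfies $Q\setminus B=\{a_i\}$ and $B\setminus Q=\{s\}$ for some $i$ and some vertex $s\notin Q$. Since $B$ is a clique, $s$ is adjacent to every $a_h$ with $h\ne i$, and the sliding condition gives $a_is\in E(H)$. Hence $s\in S$. Conversely, for every $i\in[j]$ and $s\in S$, the set $Q-a_i+s$ is a $j$-clique adjacent to $Q$.

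The pair $(i,s)$ is recovered from $B$, because $a_i$ is the unique element of $Q\setminus B$ and $s$ is the unique element of $B\setminus Q$. So the neighbors are in bijection with $[j]\times S$. Define the copy $X_i$ to consist of the vertices $s_i:=\phi^{-1}(Q-a_i+s)$ for $s\in S$.

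\textbf{Adjacency inside a layer.} Take $s\ne t$ in $S$. The cliques $Q-a_i+s$ and $Q-a_i+t$ differ exactly by the exchange $s\leftrightarrow t$. So they are $\sfTS$-adjacent iff $st\in E(H)$. (Both are already cliques, so no further condition is needed.) Hence $s\mapsto s_i$ is an isomorphism $X\to X_i$.

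\textbf{Adjacency across layers.} Take $i\ne h$. If $s=t$, then $Q-a_i+s$ and $Q-a_h+s$ differ by swapping $a_h$ for $a_i$. These are adjacent because $Q$ is a clique, so the edge $s_is_h$ is present, and over all $h$ the copies of $s$ form the transversal clique $s_1\cdots s_j$.

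If $s\ne t$, then $Q-a_i+s$ minus $Q-a_h+t$ equals $\{a_h,s\}$, which has size $2$. So these vertices are not adjacent in $\TS{j}{H}$. This is the same computation as in the $\overline{C_n}$ case of \cref{thm:camera-ts-basic-complements}. Together these cases give exactly the claimed graph.

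There is no genuine obstacle. The only care needed is in the injectivity of $(i,s)\mapsto Q-a_i+s$ and in using that the copies are disjoint. Both follow from the uniqueness of the symmetric-difference elements noted above.
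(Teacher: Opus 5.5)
Your proof is correct and follows the same route as the paper's. Both identify the neighbors of $Q$ as the cliques $Q-a_i+s$ with $s\in S$, group them into layers by the deleted vertex $a_i$, and check that same-layer adjacency reproduces $H[S]$ while cross-layer adjacency holds exactly when the inserted vertex is the same; you additionally write out the symmetric-difference computations, the converse that each $Q-a_i+s$ is a neighbor, and the injectivity of $(i,s)\mapsto Q-a_i+s$, all of which the paper leaves implicit.
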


\begin{proof}
Since two $j$-cliques are adjacent in $\TS{j}{H}$ exactly when one is obtained from the other by replacing one vertex with an adjacent non-member, every neighbor of $x_Q$ corresponds under $\phi$ to a $j$-clique obtained by replacing exactly one vertex of $Q$ by a vertex $s\in S$.
For each $i\in [j]$, let $X_i$ denote the family of $j$-cliques
\[
Q-a_i+s \qquad (s\in S),
\]
and write $s_i$ for the copy of $s$ in $X_i$.
Equivalently, $s_i$ is the vertex $Q-a_i+s$ of $N_G(x_Q)$.
Two vertices $s_i$ and $t_i$ in the same layer are adjacent exactly when $st\in E(X)$, so each $X_i$ induces a copy of $X$.
If $i\neq i'$, then $s_i$ and $t_{i'}$ are adjacent exactly when $s=t$.
Therefore, the only cross-layer edges are the edges of the transversal clique $s_1\cdots s_j$ for each $s\in S$.
\end{proof}

\begin{figure}[ht]
\centering
\begin{adjustbox}{max width=\textwidth}
\begin{tikzpicture}[x=1cm,y=1cm]
\tikzset{
  title/.style={font=\bfseries\small, align=center},
  note/.style={font=\small, align=center},
  tiny/.style={font=\scriptsize, align=center},
  setup/.style={font=\small, align=center, text width=10.2cm, inner sep=2pt},
  layerlabel/.style={font=\small, align=center, text width=1.85cm},
  v/.style={rectangle, draw=black, fill=white, rounded corners=1pt,
    minimum width=1.80cm, minimum height=20pt, inner sep=3pt, font=\scriptsize},
  layeredge/.style={draw=black, line width=.65pt},
  transedge/.style={draw=black, line width=.55pt, densely dashed},
  legendline/.style={draw=black, line width=.65pt}
}

\node[title] at (5.2,8.00) {a concrete $\mathsf{TS}_4$ neighborhood};
\node[setup] at (5.2,7.30)
  {$Q=\{a_1,a_2,a_3,a_4\}$, $S=\{s,t,u\}$, and $H[S]$ is the path $s-t-u$; every vertex of $S$ is adjacent to every vertex of $Q$.};

\foreach \name/\x/\a in {1/1.0/a_1,2/3.8/a_2,3/6.6/a_3,4/9.4/a_4} {
  \node[layerlabel] at (\x,6.30) {$X_{\name}$\\delete $\a$};
}

\node[v] (s1) at (1.0,4.70) {$Q-a_1+s$};
\node[v] (s2) at (3.8,4.70) {$Q-a_2+s$};
\node[v] (s3) at (6.6,4.70) {$Q-a_3+s$};
\node[v] (s4) at (9.4,4.70) {$Q-a_4+s$};
\node[v] (t1) at (1.0,2.30) {$Q-a_1+t$};
\node[v] (t2) at (3.8,2.30) {$Q-a_2+t$};
\node[v] (t3) at (6.6,2.30) {$Q-a_3+t$};
\node[v] (t4) at (9.4,2.30) {$Q-a_4+t$};
\node[v] (u1) at (1.0,-0.10) {$Q-a_1+u$};
\node[v] (u2) at (3.8,-0.10) {$Q-a_2+u$};
\node[v] (u3) at (6.6,-0.10) {$Q-a_3+u$};
\node[v] (u4) at (9.4,-0.10) {$Q-a_4+u$};

\foreach \i in {1,2,3,4} {
  \draw[layeredge] (s\i) -- (t\i);
  \draw[layeredge] (t\i) -- (u\i);
}

\foreach \r in {s,t,u} {
  \draw[transedge] (\r1) -- (\r2);
  \draw[transedge] (\r2) -- (\r3);
  \draw[transedge] (\r3) -- (\r4);
}
\draw[transedge] (s1.north) .. controls (2.75,5.45) and (4.85,5.45) .. (s3.north);
\draw[transedge] (s2.north) .. controls (5.55,5.45) and (7.65,5.45) .. (s4.north);
\draw[transedge] (s1.north) .. controls (3.10,5.78) and (7.30,5.78) .. (s4.north);
\draw[transedge] (t1.north) .. controls (2.75,3.05) and (4.85,3.05) .. (t3.north);
\draw[transedge] (t2.north) .. controls (5.55,3.05) and (7.65,3.05) .. (t4.north);
\draw[transedge] (t1.north) .. controls (3.10,3.38) and (7.30,3.38) .. (t4.north);
\draw[transedge] (u1.north) .. controls (2.75,0.65) and (4.85,0.65) .. (u3.north);
\draw[transedge] (u2.north) .. controls (5.55,0.65) and (7.65,0.65) .. (u4.north);
\draw[transedge] (u1.north) .. controls (3.10,0.98) and (7.30,0.98) .. (u4.north);

\draw[legendline] (2.20,-1.05) -- (2.95,-1.05);
\node[tiny, anchor=west] at (3.05,-1.05) {copied edge of $H[S]$};
\draw[transedge] (5.85,-1.05) -- (6.60,-1.05);
\node[tiny, anchor=west] at (6.70,-1.05) {edge in a transversal clique};

\end{tikzpicture}
\end{adjustbox}
\caption{A concrete instance of the neighborhood layers in \cref{lem:camera-TSj-neighborhood-layers}.  Here $j=4$, $Q=\{a_1,a_2,a_3,a_4\}$, $S=\{s,t,u\}$, and $H[S]$ is the path $s-t-u$.  The column $X_i$ consists of the neighbors obtained by deleting $a_i$ from $Q$ and adding one vertex of $S$.  Solid vertical edges are copied from $H[S]$, while the dashed edges show all three transversal $K_4$ cliques.}
\label{fig:ts-neighborhood-layers}
\end{figure}

The next lemma records the maximum cliques of the rook graph.
\begin{lemma}\label{lem:camera-rook-max-cliques}
For every integer $r\ge 2$, the rook graph $L(K_{r,r})$ has clique number $r$.
Its maximum cliques are exactly the $r$ row cliques and the $r$ column cliques.
In particular, it has exactly $2r$ maximum cliques, and every maximum clique intersects exactly $r$ others.
\end{lemma}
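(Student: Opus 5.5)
The plan is to work in the grid model of $L(K_{r,r})$. Its vertices are the cells $(i,j)\in[r]\times[r]$, and two cells are adjacent exactly when they share a row index or a column index. The row cliques are $R_i=\{(i,j):j\in[r]\}$ and the column cliques are $C_j=\{(i,j):i\in[r]\}$; each is a clique of size $r$.

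First I will bound the clique number and identify the maximum cliques. Let $Q$ be a clique with $|Q|\ge 2$ that is not contained in a single row or a single column. Then $Q$ contains two cells $(i,j)$ and $(i',j')$. If they agree in one coordinate, say $i=i'$ and $j\ne j'$, the fact that $Q$ is not inside $R_i$ gives a third cell $(i'',j'')\in Q$ with $i''\ne i$. This cell must be adjacent to both $(i,j)$ and $(i,j')$, which forces $j''=j$ and $j''=j'$, a contradiction. So any two cells of $Q$ differ in both coordinates, and then they are nonadjacent, again a contradiction. Hence every clique with at least two cells lies inside some $R_i$ or some $C_j$, so $\omega=r$. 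Moreover, a clique of size $r$ must equal a full row or a full column, since those have exactly $r$ cells. For $r\ge 2$ all $2r$ of these cliques are distinct as vertex sets, because a row and a column share only one cell and $r\ge2$.

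Next I will count intersections. Two distinct rows are disjoint, and so are two distinct columns. Each row $R_i$ meets each column $C_j$ in exactly the cell $(i,j)$. So each maximum clique intersects exactly the $r$ maximum cliques of the opposite type.

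The only point needing care is the case analysis showing that a clique cannot mix row and column directions. The three-cell argument above handles it, so I expect no real obstacle; the rest is direct counting.
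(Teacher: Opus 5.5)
Your proof is correct and matches the paper's argument: identify $L(K_{r,r})$ with the $r\times r$ grid, show every clique lies in a single row or column because two cells differing in both coordinates are nonadjacent, and count the row--column intersections. Your three-cell argument spells out the step the paper compresses into a single ``hence,'' and you also note explicitly why the $2r$ row and column cliques are distinct when $r\ge 2$, which the paper leaves implicit.
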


\begin{proof}
Identify $L(K_{r,r})$ with the $r\times r$ rook graph on the cells $(i,j)$, where two cells are adjacent exactly when they lie in a common row or in a common column; this is the standard line-graph/grid identification for $K_{m,n}$~\cite[p.~440]{BrouwerCN89}.
Every row and every column is a clique of size $r$, so $\omega(L(K_{r,r}))\ge r$.
Conversely, if a clique contains two vertices from different rows and different columns, then those two vertices are nonadjacent; hence every clique is contained either in one row or in one column, so its size is at most $r$.
Therefore, $\omega(L(K_{r,r}))=r$, and the maximum cliques are exactly the $r$ rows and the $r$ columns.
A fixed row intersects every column in one vertex and is disjoint from the other $r-1$ rows, so it intersects exactly $r$ other maximum cliques; the same holds symmetrically for each column.
\end{proof}

A concrete version of the comparison used in the next proof is shown in \cref{fig:rook-ts-boundary}.
The right-hand panel is a conditional derivation under a hypothetical $\mathsf{TS}_2$ realization, not another neighborhood drawing.
\begin{figure}[ht]
\centering
\begin{adjustbox}{max width=\textwidth}
\begin{tikzpicture}[x=1.16cm,y=1.14cm]
\tikzset{
  title/.style={font=\bfseries\small, align=center},
  note/.style={font=\small, align=center},
  tiny/.style={font=\footnotesize, align=center},
  rowclique/.style={draw=black, line width=.45pt, rounded corners=6pt},
  colclique/.style={draw=black, line width=.45pt, densely dashed, rounded corners=6pt},
  selectedrow/.style={draw=black, line width=1.15pt, rounded corners=6pt},
  nodept/.style={circle, draw=black, fill=white, minimum size=5.5pt, inner sep=0pt},
  hitpt/.style={circle, draw=black, fill=black, minimum size=6.2pt, inner sep=0pt},
  layerbox/.style={draw=black, line width=.55pt},
  slot/.style={rectangle, draw=black, fill=white, minimum width=.82cm,
    minimum height=.46cm, inner sep=2pt, font=\footnotesize, align=center},
  openslot/.style={slot, densely dotted},
  demand/.style={font=\footnotesize, align=center},
  arrow/.style={->, draw=black, line width=.55pt},
  block/.style={draw=black, line width=.95pt}
}
\path[use as bounding box] (-.25,-.18) rectangle (12.18,6.32);

\node[title] at (2.20,5.72) {(a) actual $N(A)\cong L(K_{3,3})$};

\foreach \j/\x in {1/1.05,2/2.15,3/3.25} {
  \draw[colclique] (\x-.34,1.35) rectangle (\x+.34,4.05);
  \node[tiny] at (\x,4.36) {$C_{\j}$};
}
\foreach \i/\y in {1/3.75,2/2.70,3/1.65} {
  \draw[rowclique] (.72,\y-.22) rectangle (3.58,\y+.22);
  \node[tiny, anchor=east] at (.45,\y) {$R_{\i}$};
}
\draw[selectedrow] (.70,2.46) rectangle (3.60,2.94);

\foreach \x in {1.05,2.15,3.25} {
  \node[nodept] at (\x,3.75) {};
  \node[hitpt] at (\x,2.70) {};
  \node[nodept] at (\x,1.65) {};
}

\node[note] at (2.20,.78) {$R_2$ meets $C_1,C_2,C_3$};

\draw[densely dashed, line width=.45pt] (4.65,.45) -- (4.65,5.75);

\node[title] at (8.55,5.78) {(b) derivation under $J(6,3)\cong\mathsf{TS}_2(H)$};
\node[tiny] at (8.55,5.05) {assume a $\mathsf{TS}_2$ realization};
\node[tiny] at (8.55,4.55) {then intersecting maximum $3$-cliques\\ lie in one layer};

\draw[layerbox] (5.75,2.44) rectangle (10.72,3.34);
\draw[layerbox] (5.75,1.16) rectangle (10.72,2.06);
\node[tiny, anchor=east] at (5.60,2.89) {$X_1$};
\node[tiny, anchor=east] at (5.60,1.61) {$X_2$};

\node[slot] (r2) at (6.76,2.89) {$R_2$};
\node[openslot] (p1) at (8.14,2.89) {};
\node[openslot] (p2) at (9.52,2.89) {};

\node[openslot] at (6.76,1.61) {};
\node[openslot] at (8.14,1.61) {};
\node[openslot] at (9.52,1.61) {};

\node[demand] at (8.55,3.88) {from (a): required same-layer partners\\ $C_1,C_2,C_3$};
\draw[block] (10.42,2.91) -- (10.68,3.19);
\draw[block] (10.42,3.19) -- (10.68,2.91);

\node[note] at (8.55,.58) {$3$ required $>$ $2$ available};

\end{tikzpicture}
\end{adjustbox}
\caption{A concrete form of the boundary obstruction for $r=3$ and $j=2$.  Left, in the actual local graph $N(A)\cong L(K_{3,3})$, the selected row clique $R_2$ intersects all three column cliques $C_1,C_2,C_3$.  Right, under a hypothetical realization $J(6,3)\cong\TS{2}{H}$, intersecting maximum $3$-cliques must lie in the same $\mathsf{TS}_2$ layer; since the six maximum cliques split as three per layer, only two same-layer positions remain beside $R_2$.  Thus, the three required same-layer partners from the rook side cannot all be realized.}
\label{fig:rook-ts-boundary}
\end{figure}

We can now exclude all non-endpoint boundary values at once.
\begin{theorem}\label{thm:camera-johnson-boundary-general-exclusion}
For every integer $r\ge 3$ and every integer $j$ with $2\le j\le r-1$,
\[
j\notin \KTS(J(2r,r)).
\]
\end{theorem}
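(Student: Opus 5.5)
We argue by contradiction. Suppose $J(2r,r)\cong \TS{j}{H}$ for some graph $H$ with $2\le j\le r-1$, and fix an isomorphism $\phi$. The idea is to compare, at a single vertex, two descriptions of its neighborhood: the rook graph from the Johnson side, and the layered graph from the $\sfTS_j$ side.

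Fix a vertex $x$ of $J(2r,r)$ and let $Q=\phi(x)$, a $j$-clique of $H$.
\begin{itemize}
\item By \cref{lem:camera-jnr-neighborhood-line}, $N(x)\cong L(K_{r,r})$. This graph has $r^2>0$ vertices.
\item By \cref{lem:camera-TSj-neighborhood-layers}, $N(x)$ consists of $j$ disjoint layers $X_1,\dots,X_j$, each a copy of $H[S]$, where $S=\bigcap_{a\in Q}N_H(a)$. The only cross-layer edges are those of the transversal cliques $s_1\cdots s_j$ for $s\in S$.
\item Since $N(x)$ is nonempty, $S\neq\emptyset$, so every layer is nonempty.
\end{itemize}

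\emph{Step 1: every clique of size greater than $j$ in the layered graph lies inside one layer.} Suppose a clique contains $s_i\in X_i$ and $t_{i'}\in X_{i'}$ with $i\ne i'$. Adjacency across layers forces $s=t$. Any further vertex $u_k$ of the clique is adjacent to both $s_i$ and $s_{i'}$. Since $k$ differs from at least one of $i,i'$, this forces $u=s$, so $u_k$ is another copy of $s$. Hence the whole clique lies in the transversal clique $\{s_1,\dots,s_j\}$ and has size at most $j$. Consequently every clique of size $r>j$ is contained in a single layer.

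\emph{Step 2: apply this to the rook maximum cliques.} By \cref{lem:camera-rook-max-cliques}, the maximum cliques of $L(K_{r,r})$ are the $r$ rows and the $r$ columns, each of size $r\ge j+1$. By Step 1, each of them lies in a single layer. Two maximum cliques that share a vertex must lie in the same layer, since the layers are disjoint. Every row meets every column, so all rows and all columns lie in one common layer, say $X_1$.

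\emph{Step 3: conclude.} The rows cover all $r^2$ vertices of $N(x)$, so $N(x)=X_1$. But $j\ge 2$ and every layer is nonempty, so $X_2\ne\emptyset$, which is a contradiction. Hence $j\notin\KTS(J(2r,r))$.

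The only delicate point is Step 1, the claim that a clique meeting two layers is trapped in a single transversal clique. It follows directly from the cross-layer adjacency rule in \cref{lem:camera-TSj-neighborhood-layers} ($s_i\sim t_{i'}$ for $i\neq i'$ iff $s=t$). The hypothesis $j\le r-1$ is used exactly there, to guarantee that transversal cliques are too small to contain a row or a column. No counting of the form sketched in \cref{fig:rook-ts-boundary} is needed: connectivity of the row/column intersection pattern already forces everything into one layer.
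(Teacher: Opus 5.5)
Your proof is correct and follows the paper's approach: you compare $N(x)\cong L(K_{r,r})$ with the layered neighborhood from \cref{lem:camera-TSj-neighborhood-layers}, show that no clique of size $r>j$ can meet two layers, and conclude that intersecting maximum cliques must lie in the same layer. The only difference is the final step, where the paper counts ($jm=2r$ maximum cliques, so a maximum clique meets at most $m-1\le r-1$ others within its layer, versus exactly $r$ in the rook graph), whereas you use that the rows and columns form a connected intersection pattern covering all $r^2$ vertices, which forces everything into one layer and contradicts the nonemptiness of $X_2$ --- a slightly simpler ending that avoids the count.
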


\begin{proof}
Suppose to the contrary that $J(2r,r)\cong \TS{j}{H}$ for some graph $H$, where $2\le j\le r-1$.
Fix a vertex $A$ of $J(2r,r)$, let $Q$ be the corresponding $j$-clique of $H$, and put
\[
S:=\bigcap_{x\in Q}N_H(x),\qquad X:=H[S].
\]
By \cref{lem:camera-TSj-neighborhood-layers}, the neighborhood $N_{J(2r,r)}(A)$ is obtained from $j$ disjoint copies $X_1,\dots,X_j$ of $X$ by adding, for each $s\in S$, the transversal clique $s_1\cdots s_j$.
On the other hand, by \cref{lem:camera-jnr-neighborhood-line},
\[
N_{J(2r,r)}(A)\cong L(K_{r,r}).
\]
By \cref{lem:camera-rook-max-cliques}, this neighborhood has clique number $r$, exactly $2r$ maximum cliques, and each maximum clique intersects exactly $r$ others.

Any clique meeting two different layers has size at most $j$: if a clique contains vertices from two different layers, then every cross-layer adjacency forces them to be copies of the same base vertex $s\in S$, so at most one vertex from each layer may occur.
Since $j\le r-1$, no clique of cardinality $r=\omega(L(K_{r,r}))$ can meet two different layers.
Thus, every clique of size $r$ is contained in a single layer $X_i$, and conversely every $r$-clique of a layer is a maximum clique of the whole neighborhood.
Let $m$ be the number of $r$-cliques of $X$.
Then each layer contributes exactly $m$ maximum cliques, so the layered neighborhood has exactly $jm$ maximum cliques.
Comparison with \cref{lem:camera-rook-max-cliques} gives
\[
jm=2r.
\]
Fix one maximum clique $C$ inside a layer, say $C\subseteq X_1$.
Any maximum clique that intersects $C$ must also lie in $X_1$, because different layers are disjoint and no maximum clique can use two layers.
Hence, $C$ intersects at most $m-1$ other maximum cliques.
But in the rook graph every maximum clique intersects exactly $r$ others.
Therefore,
\[
r\le m-1=\frac{2r}{j}-1\le r-1,
\]
a contradiction.
This argument proves that $j\notin \KTS(J(2r,r))$ for every $2\le j\le r-1$.
\end{proof}

We therefore obtain the complete boundary TS formula.
\begin{corollary}\label{cor:camera-johnson-boundary-all}
For every integer $r\ge 2$,
\[
\KTS(J(2r,r))=\{1,r\}.
\]
\end{corollary}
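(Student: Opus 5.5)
The plan is to combine the earlier feasibility witnesses with the exclusion results already proved. I treat $r=2$ separately from $r\ge 3$, since \cref{thm:camera-johnson-boundary-general-exclusion} only covers $r\ge 3$.

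\emph{Feasibility.} The value $1$ is feasible because $\TS{1}{J(2r,r)}\cong J(2r,r)$. The value $r$ is feasible via the complete-graph witness $\TS{r}{K_{2r}}\cong J(2r,r)$. This gives $\{1,r\}\subseteq\KTS(J(2r,r))$.

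\emph{Exclusion of other values.} Take $j\in\KTS(J(2r,r))$ with $j\ge 2$, and choose $H$ with $J(2r,r)\cong\TS{j}{H}$. Since $J(2r,r)$ has edges, \cref{lem:clique-number-formula} gives $j<\omega(H)$ and
\[
\omega(J(2r,r))=\max\{j+1,\omega(H)-j+1\}\ge j+1.
\]
The neighborhood of any vertex is $L(K_{r,r})$, whose clique number is $r$ by \cref{lem:camera-rook-max-cliques}. Hence $\omega(J(2r,r))=r+1$, since a maximum clique in the neighborhood together with the vertex itself is a clique, and conversely every maximum clique of $J(2r,r)$ lies in the closed neighborhood of any of its vertices. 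It follows that $j\le r$.

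\emph{Case $r\ge 3$.} Every value $2\le j\le r-1$ is excluded by \cref{thm:camera-johnson-boundary-general-exclusion}, so only $j=r$ remains.

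\emph{Case $r=2$.} Here $j\le 2=r$, so $j\in\{1,2\}=\{1,r\}$ and nothing further needs to be excluded.

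This yields exactly $\{1,r\}$. There is no real obstacle; the only point needing care is the bound $j\le r$. That step requires computing $\omega(J(2r,r))=r+1$, which I do through the neighborhood description in \cref{lem:camera-jnr-neighborhood-line} together with \cref{lem:camera-rook-max-cliques}.
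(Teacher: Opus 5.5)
Your proposal is correct and follows essentially the same route as the paper. The value $1$ comes from the trivial level and $r$ from the witness $\TS{r}{K_{2r}}$. The bound $j\le r$ comes from \cref{lem:clique-number-formula} together with $\omega(J(2r,r))=r+1$, computed from the rook-graph neighborhoods. Then \cref{thm:camera-johnson-boundary-general-exclusion} removes $2\le j\le r-1$ when $r\ge 3$. The only cosmetic difference is that the paper treats $r=2$ separately using $\omega(J(4,2))=3$ directly, whereas you fold that case into the uniform bound $j\le r$.
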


\begin{proof}
If $r=2$, then $1\in \KTS(J(4,2))$ is trivial, and $2\in \KTS(J(4,2))$ because $\TS{2}{K_4}\cong J(4,2)$.
Moreover, $\omega(J(4,2))=3$.
Suppose that $k\ge 3$ and $J(4,2)\cong \TS{k}{H}$ for some graph $H$.
Since $J(4,2)$ has an edge, \cref{lem:clique-number-formula} implies that $k<\omega(H)$.
Therefore,
\[
\omega(J(4,2))=\omega(\TS{k}{H})\ge k+1\ge 4,
\]
a contradiction.
Hence, $\KTS(J(4,2))=\{1,2\}$.
Assume now that $r\ge 3$.
The inclusion $\{1,r\}\subseteq \KTS(J(2r,r))$ is immediate from the trivial level $1$ and the complete-graph witness.
Conversely, if
\[
k\in \KTS(J(2r,r))\setminus\{1,r\},
\]
then there is a graph $H$ such that $J(2r,r)\cong \TS{k}{H}$.
Since $J(2r,r)$ has edges, \cref{lem:clique-number-formula} implies that $k<\omega(H)$.
	Moreover, $\omega(J(2r,r))=r+1$: by \cref{lem:camera-jnr-neighborhood-line,lem:camera-rook-max-cliques}, every clique containing a fixed vertex has size at most $r+1$, and Johnson stars and tops attain this size.
	The same formula gives
\[
r+1=\omega(\TS{k}{H})=\max\{k+1,\omega(H)-k+1\}.
\]
Hence, $k\le r$.
Together with $k\ge 1$ and $k\notin\{1,r\}$, this yields $2\le k\le r-1$.
\Cref{thm:camera-johnson-boundary-general-exclusion} excludes every such value.
\end{proof}

We are now ready to prove \Cref{thm:camera-ts-johnson}.
\begin{proof}[Proof of \Cref{thm:camera-ts-johnson}]
If $s=1$, then $J(n,r)\cong J(n,1)$ by Johnson duality, so the claim follows from \cref{cor:camera-johnson-complete-levels-ts}.
Assume from now on that $s\ge 2$.
By Johnson duality, we may replace $r$ by $s$ and therefore assume $r=s\le n/2$.
If $n=2s$, then the claim is exactly \cref{cor:camera-johnson-boundary-all}.
If $n>2s$, then the claim is exactly \cref{cor:camera-johnson-stable-all}.
These cases cover all remaining possibilities.
\end{proof}

\section{Token Jumping}
\label{sec:TJ}

\subsection{Some structural properties of \texorpdfstring{$\TJ{k}{H}$}{TJ\_k(H)}}
\label{subsec:TJ-structural}

Let $H$ be a graph and let $k\ge 1$.
If $S$ is a $(k-1)$-clique of $H$, then we write
\[
\mathcal C_H(S):=\{K\subseteq V(H): K \text{ is a $k$-clique of } H \text{ and } S\subseteq K\}.
\]
If $U$ is a $(k+1)$-clique of $H$, then we write $\mathcal T_H(U):=\{U-u:u\in U\}$.
For every $(k-1)$-clique $S$ of $H$, the set $\mathcal C_H(S)$ induces a clique in $\TJ{k}{H}$; we call this clique a \emph{star clique}. For every $(k+1)$-clique $U$ of $H$, the set $\mathcal T_H(U)$ induces a clique in $\TJ{k}{H}$; we call this clique a \emph{top clique}.
These are the TJ analogues of Johnson star and Johnson top cliques.

We next record the structure of cliques in $\TJ{k}{H}$; this will be used throughout the token-jumping section.
\begin{theorem}\label{thm:camera-TJ-clique-structure}
Let $H$ be a graph, let $k\ge 1$, and let $Q$ be a clique of $\TJ{k}{H}$ of size $m\ge 3$, whose vertices are $k$-cliques $A_1,\ldots,A_m$ of $H$.
Then exactly one of the following holds:
\begin{enumerate}[(1)]
\item there exists a $(k+1)$-clique $U$ in $H$ such that each $A_i$ has the form $U-u_i$, where $u_1,\ldots,u_m$ are pairwise distinct vertices of $U$; or
\item there exists a $(k-1)$-clique $S$ in $H$ such that each $A_i$ has the form $S+x_i$, where $x_1,\ldots,x_m\in V(H)\setminus S$ are pairwise distinct.
\end{enumerate}
In case~(1), $\bigl|\bigcup_{i=1}^m A_i\bigr|=k+1$; in case~(2), $\bigl|\bigcap_{i=1}^m A_i\bigr|=k-1$.
\end{theorem}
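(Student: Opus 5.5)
This is a purely set-theoretic statement about a family of $k$-sets with pairwise intersections of size $k-1$ (the sunflower/Erdős–Ko–Rado type dichotomy). Adjacency in $\TJ{k}{H}$ only requires $|A_i\cap A_j|=k-1$, so the plan is to classify such families and then check the clique conditions in $H$. The cases $k=1$ and $k\ge 2$ can be handled uniformly, with the convention that the empty set is the unique $0$-clique.

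\textbf{Step 1: three sets.} Consider $A_1,A_2,A_3$. Write $S:=A_1\cap A_2$, so $|S|=k-1$, $A_1=S+x_1$ and $A_2=S+x_2$ with $x_1\ne x_2$. Since $|A_3\cap A_1|=|A_3\cap A_2|=k-1$, there are two possibilities.
\begin{itemize}
\item[(a)] $S\subseteq A_3$. Then $A_3=S+x_3$ with $x_3\notin\{x_1,x_2\}$.
\item[(b)] $S\not\subseteq A_3$. Then $A_3$ misses exactly one element $s\in S$. It must contain $x_1$ and $x_2$, so $A_3=(S-s)+x_1+x_2\subseteq U:=S+x_1+x_2$, and $|U|=k+1$.
\end{itemize}
In case (b), $A_1=U-x_2$, $A_2=U-x_1$ and $A_3=U-s$.

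\textbf{Step 2: extending to all $m$ sets.}
\begin{itemize}
\item If case (a) occurs for the triple $(A_1,A_2,A_3)$, I will show every $A_i$ contains $S$. Suppose some $A_i$ does not. Applying Step 1 to the pair $(A_1,A_2)$ forces $A_i=(S-s)+x_1+x_2$. But then $|A_i\cap A_3|=k-2$, a contradiction.
\item If case (b) occurs, I will show every $A_i\subseteq U$. Suppose instead $A_i=S+y$ with $y\notin U$. Then $|A_i\cap A_3|=|S-s|=k-2$, a contradiction.
\end{itemize}
In case (a) the vertices $x_i$ are distinct because the $A_i$ are distinct. In case (b) each $A_i$ is a $k$-subset of $U$, hence of the form $U-u_i$ with distinct $u_i$.

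\textbf{Step 3: clique conditions and exclusivity.}
\begin{itemize}
\item In case (2), $S$ is a subset of the clique $A_1$, so it is a $(k-1)$-clique of $H$.
\item In case (1), the set $U$ is a $(k+1)$-clique of $H$. Any two vertices of $U$ lie together in some $U-u_i$: for $m\ge 3$, a given pair $\{a,b\}$ is avoided by at least one of three distinct removed vertices. Each $U-u_i$ is a clique of $H$, so every pair in $U$ is an edge.
\item The intersection statement in case (2): $\bigcap_i A_i=S$, since the $x_i$ are distinct and $m\ge 2$. The union statement in case (1): $\bigcup_i A_i=U$, since $m\ge 2$.
\item Exclusivity: in case (1) with $m\ge 3$, the intersection $\bigcap_i A_i=U\setminus\{u_1,\dots,u_m\}$ has size $k+1-m\le k-2$. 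So (2) cannot also hold, because (2) forces the intersection to have size $k-1$.
\end{itemize}

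The main obstacle is Step 2, namely ruling out mixed configurations. The argument there needs care for small $k$: when $k=1$ the set $S$ is empty and case (b) is vacuous.
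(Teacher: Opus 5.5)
Your proof is correct and follows essentially the same route as the paper. Both arguments classify the triple $A_1,A_2,A_3$ as star or top type, then note that each further $A_i$ must be $(A_1\cap A_2)+x$ or $(A_1\cup A_2)-x$ and use $A_3$ to rule out mixed configurations, and both finish with the same pair-covering argument for $U$ and the same intersection-size argument for exclusivity; the only difference is that the paper phrases the extension step as an induction on $m$, whereas you compare each $A_i$ directly against the fixed triple.
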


\begin{proof}
Since $Q$ is a clique in $\TJ{k}{H}$, every two distinct members $A_i,A_j$ are adjacent, and hence
\[
|A_i\cap A_j|=k-1 \qquad (1\le i<j\le m).
\]
We prove the theorem by induction on $m$.
For $m=3$, let $B=A_1\cap A_2$ and $C=A_1\cap A_3$.
Then $|B|=|C|=k-1$ and $B,C\subseteq A_1$, so
\[
k=|A_1|\ge |B\cup C|=|B|+|C|-|B\cap C|=2k-2-|B\cap C|.
\]
Hence, $|B\cap C|\ge k-2$.
Set $S=A_1\cap A_2\cap A_3=B\cap C$.
If $|S|=k-1$, then there are pairwise distinct vertices $x_1,x_2,x_3$ with
\[
A_1=S+x_1,\qquad A_2=S+x_2,\qquad A_3=S+x_3,
\]
which is case~(2).
If $|S|=k-2$, then there are pairwise distinct vertices $u_1,u_2,u_3$ with
\[
A_1=S+u_2+u_3,\qquad A_2=S+u_1+u_3,\qquad A_3=S+u_1+u_2.
\]
Let $U=S+u_1+u_2+u_3$.
Every pair of vertices of $U$ lies together in one of $A_1,A_2,A_3$, so $U$ is a $(k+1)$-clique of $H$ and $A_i=U-u_i$ for each $i$.
Thus, the theorem holds when $m=3$.

Assume now that $m\ge 4$ and the statement holds for $m-1$.
The family $A_1,\ldots,A_{m-1}$ is either of top type or of star type.
Apply the base case to the triangle $A_1,A_2,A_m$.
Since $|A_1\cap A_2|=k-1$, either
\[
A_m=(A_1\cap A_2)+x
\]
for some vertex $x$, or
\[
A_m=(A_1\cup A_2)-x
\]
for some vertex $x$.
We consider the two induction types in turn.

Suppose first that $A_1,\ldots,A_{m-1}$ are of top type, say $A_i=U-u_i$ for a $(k+1)$-clique $U$ and pairwise distinct $u_1,\ldots,u_{m-1}\in U$.
Then $A_1\cap A_2 = U-\{u_1,u_2\}$ and $A_1\cup A_2 = U$.
If $A_m=(A_1\cap A_2)+x$, then $x\notin U$; otherwise $x$ is $u_1$ or $u_2$, giving $A_m=A_1$ or $A_2$.
Choose $u_3$ distinct from $u_1,u_2$.
Since $A_3=U-u_3$ and $x\notin U$, we get
\[
A_m\cap A_3 = U-\{u_1,u_2,u_3\},
\]
which has size $k-2$, contradicting the adjacency of $A_m$ and $A_3$.
Therefore, this subcase is impossible.
Hence, $A_m=(A_1\cup A_2)-x = U-x$ for some $x\in U$.
Because $A_m$ is distinct from $A_1,\ldots,A_{m-1}$, the vertex $x$ is different from $u_1,\ldots,u_{m-1}$.
Thus, the enlarged family remains of top type.

Suppose next that $A_1,\ldots,A_{m-1}$ are of star type, say $A_i=S+x_i$ for a $(k-1)$-clique $S$ and pairwise distinct $x_1,\ldots,x_{m-1}\in V(H)\setminus S$.
Then $A_1\cap A_2=S$ and $A_1\cup A_2=S+x_1+x_2$.
If $A_m=(A_1\cap A_2)+x$, then $A_m=S+x$ with $x\notin S$ and $x\notin\{x_1,\ldots,x_{m-1}\}$ because $A_m$ is distinct from the earlier members.
So the enlarged family remains of star type.
If instead $A_m=(A_1\cup A_2)-x$, then $x$ must lie in $S$; if $x=x_1$ or $x=x_2$, then $A_m$ would equal $A_2$ or $A_1$.
Choose $x_3$ distinct from $x_1,x_2$.
Since $A_3=S+x_3$, we obtain
\[
A_m\cap A_3 = (S-\{x\})\cup \bigl(\{x_1,x_2\}\cap\{x_3\}\bigr) = S-\{x\},
\]
which has size $k-2$, again contradicting adjacency.
Therefore, this subcase is impossible.

This induction completes the proof.
In case~(1), all members are $k$-subsets of the same $(k+1)$-clique $U$, so their union is $U$.
In case~(2), all members contain the same $(k-1)$-clique $S$.
Moreover, the two cases are mutually exclusive when $m\ge 3$: in case~(1), the total intersection has size $|U|-m=(k+1)-m\le k-2$, while in case~(2) the total intersection has size $k-1$.
\end{proof}

The next corollary describes maximal cliques in $\TJ{k}{H}$.
\begin{corollary}\label{cor:camera-TJ-maximal-cliques}
Let $H$ be a graph, let $k\ge 1$, and let $Q$ be a maximal clique of $\TJ{k}{H}$.
Then at least one of the following holds:
\begin{enumerate}[(1)]
\item $Q=\mathcal C_H(S)$ for some $(k-1)$-clique $S$ of $H$;
\item $Q=\mathcal T_H(U)$ for some $(k+1)$-clique $U$ of $H$.
\end{enumerate}
In particular, every maximal clique of top type has size exactly $k+1$.
\end{corollary}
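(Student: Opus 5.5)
The plan is a case split on $|Q|$, with \cref{thm:camera-TJ-clique-structure} doing the main work when $|Q|\ge 3$ and a direct argument for small $Q$. The key observation is that both $\mathcal C_H(S)$ and $\mathcal T_H(U)$ are cliques of $\TJ{k}{H}$. Hence once $Q$ is shown to be contained in one of them, maximality of $Q$ forces equality.

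\textbf{Case $|Q|\ge 3$.} Apply \cref{thm:camera-TJ-clique-structure} to $Q$.

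In case~(2), all members of $Q$ have the form $S+x_i$ for a $(k-1)$-clique $S$. So $Q\subseteq\mathcal C_H(S)$, and maximality gives $Q=\mathcal C_H(S)$.

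In case~(1), all members have the form $U-u_i$ for a $(k+1)$-clique $U$. So $Q\subseteq\mathcal T_H(U)$, and maximality gives $Q=\mathcal T_H(U)$.

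\textbf{Case $|Q|=2$.} Say $Q=\{A,B\}$ with $|A\cap B|=k-1$.

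\begin{itemize}
\item Put $S:=A\cap B$. This is a $(k-1)$-clique and $Q\subseteq\mathcal C_H(S)$, so $Q=\mathcal C_H(S)$ by maximality.
\item This covers $k=1$ as well: then $S=\emptyset$, the empty $0$-clique by convention, and $\mathcal C_H(\emptyset)$ is all of $V(H)$ as $1$-cliques. That is consistent, since $\TJ{1}{H}$ is complete.
\end{itemize}

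\textbf{Case $|Q|=1$.} Say $Q=\{A\}$ with $A$ a $k$-clique.

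\begin{itemize}
\item If $k\ge 1$, pick any $a\in A$ and put $S=A-a$. Then $A\in\mathcal C_H(S)$, and maximality gives $Q=\mathcal C_H(S)$, meaning $\mathcal C_H(S)=\{A\}$.
\item The isolated-vertex situation is therefore handled by the star description as well.
\end{itemize}

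\textbf{Size of top-type maximal cliques.} Suppose $Q=\mathcal T_H(U)$ for a $(k+1)$-clique $U$. The sets $U-u$ for $u\in U$ are $k+1$ distinct $k$-cliques, so $|Q|=|\mathcal T_H(U)|=k+1$.

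\textbf{Main obstacle.} The only delicate point is small $k$ and $|Q|\le 2$, where \cref{thm:camera-TJ-clique-structure} does not apply. The choices above cover these cases, but the degenerate situation $k=1$ with $S=\emptyset$ needs to be checked against the empty-clique convention stated in the preliminaries.
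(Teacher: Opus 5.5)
Your proposal is correct and matches the paper's proof essentially step for step. Like the paper, you split into $|Q|=1$ (take $S=A-a$) and $|Q|=2$ (take $S=A\cap B$), using that $\mathcal C_H(S)$ is a clique containing $Q$, and handle $|Q|\ge 3$ via \cref{thm:camera-TJ-clique-structure} plus maximality; your $k=1$ remark is consistent with the preliminaries' convention that $\emptyset$ is the unique $0$-clique, which the paper's argument also uses implicitly.
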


\begin{proof}
If $|Q|=1$, let $A$ be its unique vertex and choose any $(k-1)$-subset $S\subset A$.
If some $k$-clique other than $A$ also contained $S$, then it would be adjacent to $A$ in $\TJ{k}{H}$, contradicting maximality.
Hence, $Q=\mathcal C_H(S)$.
If $|Q|=2$, say $Q=\{A,B\}$, then $S=A\cap B$ has size $k-1$ and both $A$ and $B$ belong to $\mathcal C_H(S)$.
If $\mathcal C_H(S)$ contained a third member, then $Q$ would not be maximal.
Thus, again $Q=\mathcal C_H(S)$.
Assume now that $|Q|\ge 3$.
By \cref{thm:camera-TJ-clique-structure}, $Q$ is of top type or star type.
If $Q$ is of star type, then every vertex of $\mathcal C_H(S)$ is adjacent to all of its members, so maximality forces $Q=\mathcal C_H(S)$.
If $Q$ is of top type, then every set $U-u$ with $u\in U$ is adjacent to all its members, so maximality forces $Q=\mathcal T_H(U)$.
The size statement is immediate.
\end{proof}

\subsection{Basic Graph Families and Their Complements}
\label{sec:TJ-basic}

On the TJ side we begin with two elementary identifications.
\begin{proposition}\label{prop:TJ1}
For any graph $H$, we have $\TJ{1}{H}\cong K_{|V(H)|}$.
Consequently, a graph $G$ is a $\sfTJ_1$-graph if and only if $G$ is complete.
\end{proposition}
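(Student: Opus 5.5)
The plan is to unwind the definition of $\TJ{1}{H}$ directly and then read off both directions of the characterization.

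First I will identify the vertex set. By definition, the vertices of $\TJ{1}{H}$ are the $1$-cliques of $H$. Every single vertex $\{v\}$ with $v\in V(H)$ is a $1$-clique, and these are the only ones, so $v\mapsto\{v\}$ is a bijection $V(H)\to V(\TJ{1}{H})$. Next I check adjacency. Two distinct $1$-cliques $\{u\}$ and $\{v\}$ are adjacent in $\TJ{1}{H}$ exactly when $|\{u\}\cap\{v\}|=k-1=0$. This holds for every pair $u\ne v$, with no condition on $uv\in E(H)$, because the token-jumping rule only asks that the new set be a $k$-clique, and every singleton is one. Hence every pair of distinct vertices is adjacent, and $\TJ{1}{H}\cong K_{|V(H)|}$.

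For the ``consequently'' part, one direction is now immediate: if $G\cong\TJ{1}{H}$, then $G$ is complete. For the converse, take a complete graph $G=K_n$ and choose any graph $H$ on $n$ vertices, for instance $H=K_n$ or $H=\overline{K_n}$. The first part gives $\TJ{1}{H}\cong K_n$. This also covers $n=1$.

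There is no real obstacle here. The only point to state explicitly is the contrast with $\TS{1}{H}\cong H$: adjacency in $\TJ{1}{H}$ ignores the edges of $H$. One should also check the degenerate edge case, namely that $k=1$ gives $k-1=0$, so the condition $|A\cap B|=k-1$ is exactly disjointness of the two singletons, that is, $A\ne B$.
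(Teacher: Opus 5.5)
Your proposal is correct and matches the paper's proof: both identify the vertices of $\TJ{1}{H}$ with the singletons of $V(H)$ and observe that any two distinct singletons intersect in $0=k-1$ vertices, so they are adjacent. You also spell out the converse (any $H$ on $n$ vertices realizes $K_n$), which the paper leaves implicit.
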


\begin{proof}
The vertices of $\TJ{1}{H}$ are the singletons of $V(H)$, and any two distinct singletons differ in exactly one element.
Hence, every two vertices are adjacent.
\end{proof}

\begin{lemma}\label{lem:TJ2-line-graph}
For every graph $H$, we have $\TJ{2}{H}\cong L(H)$.
\end{lemma}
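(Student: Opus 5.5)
We will write down the obvious bijection and check that it preserves adjacency in both directions. The vertices of $\TJ{2}{H}$ are the $2$-cliques of $H$, which are exactly the edges of $H$. The vertices of $L(H)$ are also the edges of $H$. So we take the identity map $f:V(\TJ{2}{H})\to V(L(H))$, sending each $2$-clique $\{u,v\}$ to the edge $uv$. This map is clearly a bijection.

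Next we compare adjacency. Two distinct $2$-cliques $A=\{u,v\}$ and $B=\{x,y\}$ are adjacent in $\TJ{2}{H}$ exactly when $|A\cap B|=1$. Two distinct edges are adjacent in $L(H)$ exactly when they share an endpoint. Since $A\neq B$, we always have $|A\cap B|\le 1$, so sharing an endpoint is the same as $|A\cap B|=1$. Hence $AB\in E(\TJ{2}{H})$ if and only if $f(A)f(B)\in E(L(H))$, and $f$ is an isomorphism.

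No step here is a real obstacle. The only point that needs a moment of care is the remark that distinct $2$-sets can meet in at most one element. This is what makes the token-jumping condition $|A\cap B|=k-1=1$ coincide exactly with the line-graph incidence condition.

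Note that, unlike in $\TS{2}{H}$, we never need the exchanged vertices to be adjacent in $H$. This is why $\TJ{2}{H}$ recovers the full line graph $L(H)$, whereas $\TS{2}{H}$ keeps only those pairs of incident edges that lie in a common triangle.
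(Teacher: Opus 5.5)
Your proposal is correct and follows essentially the same argument as the paper: both identify the vertices of $\TJ{2}{H}$ with the edges of $H$ and observe that the condition $|A\cap B|=1$ is exactly the line-graph condition of sharing an endpoint. Your closing remark about $\TS{2}{H}$ keeping only incident edge pairs that lie in a common triangle is also accurate.
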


\begin{proof}
The vertices of $\TJ{2}{H}$ are the edges of $H$.
Two edges are adjacent in $\TJ{2}{H}$ exactly when they share one endpoint, which is precisely the line-graph adjacency relation.
\end{proof}

The next lemma gives a triangle-free line-lift construction which will be useful for excluding certain boundary values on the TJ side.
\begin{lemma}\label{lem:TJ-line-lift}
Let $X$ be a triangle-free graph and let $k\ge 2$.
Define
\[
H_k(X)=
\begin{cases}
X, & k=2,\\
K_{k-2}\vee X, & k\ge 3,
\end{cases}
\]
where the added $(k-2)$-clique is disjoint from $V(X)$.
Then
\[
\TJ{k}{H_k(X)}\cong L(X).
\]
\end{lemma}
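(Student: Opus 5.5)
The plan is to write down the $k$-cliques of $H_k(X)$ explicitly and check that they correspond bijectively to the edges of $X$, with the TJ adjacency becoming edge incidence. The case $k=2$ is simply \cref{lem:TJ2-line-graph}, since $H_2(X)=X$ and $\TJ{2}{X}\cong L(X)$. So assume $k\ge 3$, and let $W$ be the added $(k-2)$-clique, so that $H_k(X)=H[W]\vee X$ with $W$ complete to $V(X)$.

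The first step is to classify the $k$-cliques of $H_k(X)$. Any clique $K$ of $H_k(X)$ has the form $K=W'\cup Y$, where $W'\subseteq W$ and $Y$ is a clique of $X$. Since $X$ is triangle-free, $|Y|\le 2$, so $|K|\le (k-2)+2=k$. Equality forces $W'=W$ and $|Y|=2$, that is, $Y$ is an edge of $X$. Conversely, for every edge $xy\in E(X)$ the set $W\cup\{x,y\}$ is a $k$-clique, because $W$ is a clique joined to all of $X$. Hence
\[
\phi: E(X)\to V(\TJ{k}{H_k(X)}),\qquad xy\mapsto W\cup\{x,y\},
\]
is a bijection. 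It is injective because the edge is recovered as $K\setminus W$.

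The second step is to check adjacency. For distinct edges $e,f$ of $X$ we have $|\phi(e)\cap\phi(f)|=(k-2)+|e\cap f|$. This equals $k-1$ exactly when $|e\cap f|=1$, which is exactly the condition for $e$ and $f$ to be adjacent in $L(X)$. So $\phi$ is an isomorphism $L(X)\to\TJ{k}{H_k(X)}$.

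The only place needing care is the clique classification. One must use the triangle-free hypothesis to rule out $k$-cliques that take fewer than $k-2$ vertices from $W$ together with three or more vertices of $X$. The rest is routine.
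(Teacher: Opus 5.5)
Your proposal is correct and follows essentially the same argument as the paper: you reduce $k=2$ to the line-graph lemma, use triangle-freeness to show every $k$-clique of $K_{k-2}\vee X$ is the added clique together with an edge of $X$, and check that TJ-adjacency corresponds to edges sharing exactly one endpoint. Your explicit intersection count $(k-2)+|e\cap f|$ is simply a slightly more detailed version of the paper's final adjacency step.
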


\begin{proof}
For $k=2$, this is exactly \cref{lem:TJ2-line-graph}.
Assume $k\ge 3$, and let $C$ be the added clique of size $k-2$.
Since $X$ is triangle-free, every clique of $X$ has size at most $2$.
Therefore, every $k$-clique of $H_k(X)=K_{k-2}\vee X$ contains all vertices of $C$ and exactly two adjacent vertices $u,v\in V(X)$.
Conversely, every edge $uv\in E(X)$ gives a $k$-clique $C+u+v$.
Hence, the map
\[
C+u+v\longmapsto uv
\]
is a bijection from the vertices of $\TJ{k}{H_k(X)}$ to the edges of $X$.
Two such $k$-cliques are TJ-adjacent exactly when the corresponding edges of $X$ share one endpoint.
Thus, the adjacency relation is precisely that of the line graph $L(X)$.
\end{proof}

For complete bipartite and friendship targets on the TJ side, we also need two local lemmas.

\begin{lemma}\label{lem:TJ-trianglefree-degree-bound}
Let $H$ be a graph, let $k\ge 2$, and let $G=\TJ{k}{H}$ be triangle-free.
Then every vertex of $G$ has degree at most $k$.
In particular,
\[
\Delta(G)\le k.
\]
\end{lemma}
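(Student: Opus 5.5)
Fix a vertex $A$ of $G=\TJ{k}{H}$, viewed as a $k$-clique of $H$. We split the neighbors of $A$ according to the $(k-1)$-subset they share with $A$. Every neighbor $B$ satisfies $|A\cap B|=k-1$, so $B\in\mathcal C_H(A-a)$ for exactly one $a\in A$, namely the unique element of $A\setminus B$. Hence
\[
N_G(A)=\bigcup_{a\in A}\bigl(\mathcal C_H(A-a)\setminus\{A\}\bigr),
\]
and this union is disjoint.

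Next we show that each part contributes at most one neighbor. Each $\mathcal C_H(A-a)$ is a star clique of $G$ containing $A$. If it contained two neighbors $B,B'$ of $A$, then $A,B,B'$ would be pairwise adjacent, giving a triangle in $G$. This contradicts the assumption that $G$ is triangle-free. So $|\mathcal C_H(A-a)\setminus\{A\}|\le 1$ for every $a\in A$.

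Summing over the $k$ choices of $a\in A$ gives $\deg_G(A)\le k$. Since $A$ was arbitrary, $\Delta(G)\le k$.

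The argument has no genuine obstacle. The only point needing care is that the decomposition is a partition, that is, that the shared $(k-1)$-subset $A\cap B$ is uniquely determined by $B$. This holds because $A\cap B=A-a$ where $a$ is the single vertex of $A\setminus B$. We do not even need \cref{thm:camera-TJ-clique-structure}; the star cliques $\mathcal C_H(S)$ suffice.
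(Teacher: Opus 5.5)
Your proposal is correct and matches the paper's proof: both sort the neighbors of $A$ by the unique deleted vertex $a\in A$, and note that two neighbors deleting the same $a$ share $A-a$ and so would form a triangle with $A$. Summing over the $k$ choices of $a$ then gives $\deg_G(A)\le k$. Phrasing each class as a subset of the star clique $\mathcal C_H(A-a)$ is only a cosmetic difference from the paper's direct computation $|B_1\cap B_2|=k-1$.
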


\begin{proof}
Let $A$ be a vertex of $G$, viewed as a $k$-clique of $H$.
Any neighbor $B$ of $A$ satisfies $|A\cap B|=k-1$, so $B=A-a+x$ for a unique vertex $a\in A$ and some vertex $x\notin A$.
If two distinct neighbors $B_1,B_2$ of $A$ both deleted the same vertex $a$, then
\[
A\cap B_1=A\cap B_2=A-a,
\]
so $|B_1\cap B_2|=k-1$ and hence $B_1$ and $B_2$ would be adjacent in $G$.
Then $A,B_1,B_2$ would span a triangle, contradicting the triangle-freeness of $G$.
Thus, for each of the $k$ choices of $a\in A$, there is at most one neighbor of $A$ that deletes $a$.
Therefore, $\deg_G(A)\le k$.
\end{proof}

\begin{lemma}\label{lem:TJ-trianglefree-common-neighbors}
Let $H$ be a graph, let $k\ge 2$, and let $G=\TJ{k}{H}$ be triangle-free.
Then any two nonadjacent vertices of $G$ have at most two common neighbors.
\end{lemma}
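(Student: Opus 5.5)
Take two nonadjacent vertices $A$ and $B$ of $G$, viewed as $k$-cliques of $H$, and let $C$ be a common neighbor. Then $|A\cap C|=|B\cap C|=k-1$, so $|A\cap B|\ge k-2$. Since $A$ and $B$ are nonadjacent and distinct, $|A\cap B|\ne k-1$, which forces $|A\cap B|=k-2$. (If $A$ and $B$ have no common neighbor there is nothing to prove.)

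Write $A=T+a_1+a_2$ and $B=T+b_1+b_2$, where $|T|=k-2$ and $a_1,a_2,b_1,b_2$ are pairwise distinct. The plan is to classify every common neighbor $C$ and then use triangle-freeness to cut the list down to at most two.

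\textbf{Classification.} Because $|C\cap A|=|C\cap B|=k-1$ and $|C|=k$, we count $|C\cap T|+|C\cap\{a_1,a_2\}|=k-1$ and $|C\cap T|+|C\cap\{b_1,b_2\}|=k-1$. Two cases arise.
\begin{itemize}
\item If $T\subseteq C$, then $C$ contains exactly one $a_i$ and exactly one $b_j$. Since $|C|=k$, this gives $C=T+a_i+b_j$, which is possible only when $a_ib_j\in E(H)$. There are at most four such sets.
\item If $C$ misses some $t\in T$, then $|C\cap T|=k-3$ and $C$ must contain $a_1,a_2,b_1,b_2$. This gives $C=(T-t)+a_1+a_2+b_1+b_2$, of size $k+1$, which is impossible.
\end{itemize}
Hence every common neighbor has the form $C_{ij}:=T+a_i+b_j$ with $a_ib_j\in E(H)$.

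\textbf{Using triangle-freeness.} We have $|C_{11}\cap C_{12}|=k-1$, so $C_{11}$ and $C_{12}$ are adjacent in $G$, and both are adjacent to $A$. If both were present, then $A,C_{11},C_{12}$ would form a triangle in $G$. So at most one $C_{ij}$ with a fixed $i$ is present. By the same argument with $B$ in place of $A$, at most one $C_{ij}$ with a fixed $j$ is present. The present pairs $(i,j)$ therefore form a partial matching in a $2\times 2$ grid, so there are at most two of them. This proves that $A$ and $B$ have at most two common neighbors.

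None of the steps is a real obstacle. The only point needing care is the case $t\notin C$ in the classification, which is a short counting argument. The argument also uses only $k\ge 2$: for $k=2$ we have $T=\emptyset$ and the second case is vacuous.
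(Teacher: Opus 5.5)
Your proof is correct and follows essentially the paper's argument. You force $|A\cap B|=k-2$, reduce the common neighbours to the four candidates $T+a_i+b_j$, and then use that two candidates sharing an $a_i$ or a $b_j$ would form a triangle with $A$ (or $B$). Your explicit check that a common neighbour cannot omit a vertex of $T$ fills in a step the paper only asserts.
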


\begin{proof}
Let $A,B$ be nonadjacent vertices of $G$.
If they have no common neighbor, there is nothing to prove.
So let $C$ be a common neighbor.
Since $|A\cap C|=|B\cap C|=k-1$, we have
\[
|A\cap B|\ge |A\cap C|+|B\cap C|-|C|=(k-1)+(k-1)-k=k-2.
\]
Because $A$ and $B$ are not adjacent, $|A\cap B|\neq k-1$, hence $|A\cap B|=k-2$.
Write
\[
A=S+a_1+a_2,\qquad B=S+b_1+b_2,
\]
where $S=A\cap B$ has size $k-2$ and $a_1,a_2,b_1,b_2$ are pairwise distinct.
Now let $D$ be any common neighbor of $A$ and $B$.
Since $|A\cap D|=|B\cap D|=k-1$, the set $D$ must contain $S$, exactly one of $\{a_1,a_2\}$, and exactly one of $\{b_1,b_2\}$.
Thus, every common neighbor of $A$ and $B$ is one of the four candidate sets
\[
S+a_i+b_j\qquad (i,j\in\{1,2\}).
\]
Among any three of these four candidates, two share the same choice of $a_i$ or the same choice of $b_j$.
Such two candidates intersect in exactly $k-1$ vertices, so they are adjacent in $G$.
But all common neighbors of $A$ and $B$ lie in $N_G(A)$, and $N_G(A)$ is an independent set because $G$ is triangle-free.
Therefore, at most two of the four candidates can occur.
\end{proof}

\begin{lemma}\label{lem:TJ-neighborhood-line-bipartite}
Let $H$ be a graph, let $k\ge 2$, and let $A$ be a vertex of $\TJ{k}{H}$.
Then there exists a bipartite graph $B_A$ such that
\[
N_{\TJ{k}{H}}(A)\cong L(B_A).
\]
\end{lemma}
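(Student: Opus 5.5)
The plan is to write down the bipartite graph explicitly, exactly as in the Johnson neighborhood model of \cref{lem:camera-jnr-neighborhood-line}. Every neighbor $B$ of $A$ in $\TJ{k}{H}$ satisfies $|A\cap B|=k-1$. Hence $B=A-a+x$ for a \emph{unique} pair $(a,x)$ with $a\in A$ and $x\in V(H)\setminus A$: namely $a$ is the unique element of $A\setminus B$ and $x$ is the unique element of $B\setminus A$.

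We define $B_A$ to be the bipartite graph with parts $A$ and $Y:=V(H)\setminus A$. Its edge set consists of all pairs $ax$ with $a\in A$, $x\in Y$, such that $A-a+x$ is a $k$-clique of $H$. Equivalently, $x$ is adjacent in $H$ to every vertex of $A-a$. The map $ax\mapsto A-a+x$ is then a bijection from $E(B_A)$ onto $N_{\TJ{k}{H}}(A)$:
\begin{itemize}
\item it is well defined because of the clique condition;
\item it is injective because $(a,x)$ is recovered from $B$;
\item it is surjective by the uniqueness observation above.
\end{itemize}

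The main step is to check that adjacency is preserved. Take two distinct edges $ax$ and $a'x'$ of $B_A$, with corresponding neighbors $B=A-a+x$ and $B'=A-a'+x'$; we compute $|B\cap B'|$.
\begin{itemize}
\item If $a=a'$ (so $x\ne x'$), then $B\cap B'=A-a$, which has size $k-1$, so $B$ and $B'$ are adjacent.
\item If $x=x'$ (so $a\ne a'$), then $B\cap B'=(A-\{a,a'\})+x$, which has size $k-1$, so they are adjacent.
\item If $a\ne a'$ and $x\ne x'$, then $B\cap B'=A-\{a,a'\}$, since $x\notin B'$ and $x'\notin B$ (both lie outside $A$ and are distinct). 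This has size $k-2$, so they are not adjacent.
\end{itemize}
Thus two neighbors are adjacent in $\TJ{k}{H}$ exactly when the corresponding edges of $B_A$ share an endpoint, which is line-graph adjacency. This gives $N_{\TJ{k}{H}}(A)\cong L(B_A)$.

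No real obstacle is expected. The only care needed is the case analysis on intersections, in particular confirming that $x\notin B'$ when $x\neq x'$, and noting that the hypothesis $k\ge 2$ makes the sets $A-a$ meaningful and keeps the counts $k-1$ and $k-2$ distinct. The same construction could be recorded with $B_A$ restricted to its non-isolated vertices, but that is not needed.
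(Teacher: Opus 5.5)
Your proposal is correct and follows essentially the same route as the paper: the bipartite graph $B_A$ joins $a\in A$ to an outside vertex $x$ exactly when $A-a+x$ is a $k$-clique, and the intersection-size case analysis shows that TJ-adjacency of neighbors matches sharing an endpoint. The only difference is that the paper takes the second side to be the set $X_A$ of outside vertices that actually occur in some edge, whereas you use all of $V(H)\setminus A$; as you note, the extra isolated vertices do not change the line graph.
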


\begin{proof}
Write $A=\{a_1,\dots,a_k\}$.
Let $X_A$ be the set of vertices $x\in V(H)\setminus A$ for which $A-a+x$ is a $k$-clique of $H$ for some $a\in A$.
Define a bipartite graph $B_A$ with bipartition $(A,X_A)$ by joining $a\in A$ to $x\in X_A$ whenever $A-a+x$ is a $k$-clique of $H$.
Then the vertices of $N_{\TJ{k}{H}}(A)$ are exactly the cliques $A-a+x$ corresponding to the edges $ax$ of $B_A$.
Moreover, two such neighbors $A-a+x$ and $A-b+y$ are adjacent in $\TJ{k}{H}$ if and only if they intersect in exactly $k-1$ vertices, which happens if and only if $a=b$ or $x=y$.
This criterion is precisely the adjacency relation in the line graph $L(B_A)$.
Hence, $N_{\TJ{k}{H}}(A)\cong L(B_A)$.
\end{proof}

\begin{lemma}\label{lem:line-bip-common-neighbors-clique}
Let $B$ be a bipartite graph.
Then for every adjacent pair of vertices $e,f$ in $L(B)$, the common neighborhood $N_{L(B)}(e)\cap N_{L(B)}(f)$ is a clique.
\end{lemma}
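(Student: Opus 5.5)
The plan is to work directly with the edge model of the line graph. The vertices of $L(B)$ are the edges of $B$, and two of them are adjacent exactly when they share an endpoint. So fix adjacent vertices $e,f$ of $L(B)$. These are distinct edges of $B$ with a common endpoint, and we write $e=wx$ and $f=wy$ with $x\neq y$. Since $B$ is bipartite, $x$ and $y$ lie on the same side of the bipartition, opposite to $w$; in particular $xy\notin E(B)$.

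The first step is to identify every common neighbor $g$ of $e$ and $f$, meaning an edge of $B$ different from $e$ and $f$ that meets both. I split according to whether $g$ contains $w$.
\begin{itemize}
\item If $w\in g$, then $g$ meets both $e$ and $f$ through $w$, so every such edge $g=wz$ with $z\notin\{x,y\}$ is a common neighbor.
\item If $w\notin g$, then $g$ must contain an endpoint of $e$ other than $w$, namely $x$, and an endpoint of $f$ other than $w$, namely $y$. Hence $g=xy$, which is not an edge of $B$ because $x$ and $y$ lie on the same side of the bipartition.
\end{itemize}
Therefore
\[
N_{L(B)}(e)\cap N_{L(B)}(f)=\{wz: z\in N_B(w)\setminus\{x,y\}\}.
\]

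The second step is immediate from this description. All these edges share the endpoint $w$, so any two of them are adjacent in $L(B)$, and the common neighborhood is a clique. When the set is empty or a singleton the conclusion holds trivially.

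The only point needing care is the exclusion of the triangle edge $xy$, and this is exactly where bipartiteness is used. Without it, $K_3$ is a counterexample: for adjacent edges $e,f$ of $K_3$ the third edge is a common neighbor, while the remaining common neighbors at $w$ are not adjacent to it. I expect no further obstacle.
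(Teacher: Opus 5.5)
Your proof is correct and is essentially the paper's argument: fix the shared endpoint $w$, observe that a common neighbor avoiding $w$ would have to be the edge $xy$, which bipartiteness rules out, and conclude that all common neighbors are edges at $w$, hence pairwise adjacent. One slip in your closing aside: $K_3$ is not a counterexample, since there the common neighborhood of $e$ and $f$ is the single edge $xy$ and is trivially a clique. To get a genuine counterexample you need an extra edge $wz$, as in $K_4$ or a triangle with a pendant edge at $w$; then $xy$ and $wz$ are both common neighbors but are nonadjacent in the line graph.
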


\begin{proof}
Let $e=xy_1$ and $f=xy_2$ be two adjacent edges of $B$ sharing the endpoint $x$.
Because $B$ is bipartite, every edge adjacent to both $e$ and $f$ must also be incident with $x$; otherwise it would join $y_1$ to $y_2$.
Thus, all common neighbors of $e$ and $f$ in $L(B)$ correspond to edges of $B$ incident with $x$.
Any two such edges are adjacent in $L(B)$, so the common neighborhood is a clique.
\end{proof}

\begin{theorem}\label{thm:warmup-tj}
The following exact formulas hold.
\begin{enumerate}[(1)]
\item For complete graphs,
\[
\KTJ(K_n)=\{k:k\ge 1\}\qquad (n\ge 1).
\]
\item For paths and cycles,
\[
\KTJ(P_2)=\{k:k\ge 1\},
\qquad
\KTJ(P_n)=\{k:k\ge 2\}\quad (n\ge 3),
\]
\[
\KTJ(C_3)=\{k:k\ge 1\},
\qquad
\KTJ(C_n)=\{k:k\ge 2\}\quad (n\ge 4).
\]
\item For stars and complete bipartite graphs,
\[
\KTJ(K_{1,n})=
\begin{cases}
\{k:k\ge 1\}, & n=1,\\
\{k:k\ge 2\}, & n=2,\\
\{k:k\ge n\}, & n\ge 3,
\end{cases}
\]
and
\[
\KTJ(K_{m,n})=\varnothing
\qquad (2\le m\le n\text{ and } n\ge 3).
\]
\item For book graphs,
\[
\KTJ(B_p)=
\begin{cases}
\{k:k\ge 1\}, & p=1,\\
\{2\}, & p=2,\\
\varnothing, & p\ge 3.
\end{cases}
\]
\item For friendship graphs,
\[
\KTJ(F_p)=
\begin{cases}
\{k:k\ge 1\}, & p=1,\\
\{k:k\ge p\}, & p\ge 2.
\end{cases}
\]
\end{enumerate}
\end{theorem}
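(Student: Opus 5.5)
The plan is to treat the five items one at a time. Each item has two halves: \emph{feasibility}, which I will get from explicit witnesses, mostly the complete graph, \cref{lem:TJ2-line-graph}, or the line-lift \cref{lem:TJ-line-lift}; and \emph{exclusion}, which I will get from \cref{prop:TJ1}, \cref{lem:TJ-trianglefree-degree-bound,lem:TJ-trianglefree-common-neighbors}, and the local structure in \cref{lem:TJ-neighborhood-line-bipartite,lem:line-bip-common-neighbors-clique}.

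\textbf{Complete graphs.} For every $k\ge 1$ and $n\ge 1$ I take $H=K_{k-1}\vee \overline{K_n}$. Its $k$-cliques are exactly the sets $C+x$ with $C$ the $(k-1)$-clique and $x$ one of the $n$ independent vertices. Any two of these meet in $C$, so they are pairwise adjacent and $\TJ{k}{H}\cong K_n$. This settles (1), and therefore also $P_2$, $C_3$, $K_{1,1}$ and $B_1=K_3$, $F_1=K_3$.

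\textbf{Paths, cycles, stars and $K_{m,n}$.} For non-complete targets, \cref{prop:TJ1} excludes $k=1$. Positive witnesses come from \cref{lem:TJ-line-lift}:
\begin{itemize}
\item $P_n=L(P_{n+1})$ and $C_n=L(C_n)$ with triangle-free base graphs give every $k\ge 2$, except that $C_n$ is not triangle-free when $n=3$; that case is already covered.
\item $K_{1,2}=P_3$ is handled the same way.
\end{itemize}
For stars $K_{1,n}$ with $n\ge 3$ the target is triangle-free, so \cref{lem:TJ-trianglefree-degree-bound} forces $k\ge n$. For each $k\ge n$ I build an explicit witness: take a $k$-clique $A=\{a_1,\dots,a_k\}$ and, for $i\le n$, add a vertex $x_i$ adjacent to $A-a_i$ only. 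Then the neighbors $A-a_i+x_i$ are pairwise at intersection $k-2$, hence nonadjacent. I still need to check that no extra $k$-cliques appear: any clique containing $x_i$ lies in $A-a_i+x_i$, and the $x_i$ are pairwise nonadjacent, so this holds. For $K_{m,n}$ with $2\le m\le n$ and $n\ge 3$, two nonadjacent vertices on the $m$-side have $n\ge 3$ common neighbors, which contradicts \cref{lem:TJ-trianglefree-common-neighbors}.

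\textbf{Books.} $B_2$ is $K_4$ minus an edge, and $B_2=L(K_{1,3}+e)=L(\text{paw})$, so $2$ is feasible. For $k\ge 3$ the issue is that a paw contains a triangle, so the line lift fails. Here I will analyze via \cref{thm:camera-TJ-clique-structure}: the two triangles of $B_2$ share an edge $\{u,v\}$. Let $S=A_u\cap A_v$. Each triangle is either star type, forcing the third vertex to be $S+x$, or top type, with union $A_u\cup A_v$. Two star-type triangles would put $w_1,w_2$ in the same star and make them adjacent. Two top-type triangles would give both the same top, again making $w_1,w_2$ adjacent. In the mixed case, $w_1=S+x$ with $x\notin A_u\cup A_v$, and $w_2=(A_u\cup A_v)-s$ for some $s\in S$. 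Then $w_1$ and $w_2$ are nonadjacent, which is consistent, so more must be used. I will check whether the needed vertex set forces an extra $k$-clique: $S+x$, $S+a$, $S+b$ together with $S-s+a+b$ are all cliques, so $H$ contains $S\cup\{a,b,x\}$ with $xa,xb$ edges and $ab$ an edge, and then $S-s+a+x$ is a clique when $k\ge 3$. This uses the fact that $S-s$ is nonempty.

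For $k=1$ the target must be complete, which $B_2$ is not. For $B_p$ with $p\ge 3$, the vertices $u,v$ have $p\ge 3$ pairwise nonadjacent common neighbors. By \cref{lem:TJ-neighborhood-line-bipartite,lem:line-bip-common-neighbors-clique}, the common neighborhood of an adjacent pair is a clique, which gives a contradiction for every $k\ge 2$. The same argument gives $k=2$ via the line graph.

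\textbf{Friendship graphs.} Here $F_p$ with $p\ge 2$ is a union of triangles through a center $c$. The neighborhood of $c$ is $pK_2\cong L(B_c)$. Since $B_c$ has $p$ disjoint edge pairs, each pair is a path $P_3$ centered at some $a\in A$, and these centers must be distinct, so $k\ge p$. For the witnesses, I take a $k$-clique $A$ and, for $i\le p$, add two vertices $x_i,y_i$ that are adjacent to $A-a_i$ and not to each other, with no other edges.

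\textbf{Main obstacle.} I expect the most delicate parts to be the $B_2$ exclusion for $k\ge 3$ and verifying that the star and friendship witnesses have no stray $k$-cliques.
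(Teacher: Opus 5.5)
Your book-graph item (4) has two genuine gaps.

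\textbf{First gap ($B_p$, $p\ge 3$).} Here you apply \cref{lem:line-bip-common-neighbors-clique} to the adjacent pair $u,v$ of the whole graph $B_p\cong\TJ{k}{H}$. That lemma only concerns adjacent pairs inside a vertex neighbourhood $N(A)\cong L(B_A)$. The property you need is false for $\TJ{k}{H}$ itself: in $\TJ{2}{K_4}$ the adjacent vertices $\{1,2\},\{1,3\}$ have the nonadjacent common neighbours $\{1,4\}$ (star side) and $\{2,3\}$ (top side). No vertex of $B_p$ is adjacent to all of $u,v,w_1,w_2,w_3$, so the lemma yields no contradiction, neither for $k\ge 3$ nor, as you also claim, for $k=2$. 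A correct use of \cref{lem:TJ-neighborhood-line-bipartite} would be this: $N(u)\cong K_{1,p}$ contains an induced claw, while every line graph is claw-free. The paper uses the claw-freeness of $L(H)$ for $k=2$.

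\textbf{Second gap ($B_2$, $k\ge 3$, mixed case).} You assert $xa,xb\in E(H)$, but nothing forces this. $S+x$ is TJ-adjacent to $S+a$ and $S+b$ merely because they share $S$, so $S-s+a+x$ need not be a clique. The missing observation is that a top-type triangle is impossible on its own. If $\{A_u,A_v,w\}$ is of top type, then $U=A_u\cup A_v$ is a $(k+1)$-clique of $H$. Hence $\mathcal T_H(U)$ is a clique of size $k+1\ge 4$ in $\TJ{k}{H}$, contradicting $\omega(B_p)=3$. In your mixed case, $U-s'$ with $s'\in S-s$ is exactly the extra $k$-clique you were looking for.

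This is the paper's argument via \cref{cor:camera-TJ-maximal-cliques}. Every maximal triangle $\{u,v,w_i\}$ must be the star clique with core $A_u\cap A_v$, so the $w_i$ are pairwise adjacent. That disposes of every $p\ge 2$ and $k\ge 3$ at once, with no case split.

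\textbf{Remaining items.} Items (1)--(3) and your friendship witnesses are essentially the paper's arguments. Your friendship upper bound via $N(c)\cong pK_2\cong L(B_c)$ is a valid alternative to the paper's pigeonhole count, with one repair. Replace the unjustified ``centered at some $a\in A$'' by this observation: the $p$ edge-components of $B_c$ are vertex-disjoint and each contains a vertex of the $k$-set $A$, which gives $k\ge p$.
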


\begin{proof}
For complete graphs, the case $k=1$ is \cref{prop:TJ1}.
Now fix $k\ge 2$.
Let $H$ be the complete $k$-partite graph with $k-1$ singleton parts and one part of size $n$.
Every $k$-clique of $H$ contains the $k-1$ singleton vertices together with one vertex from the large part, so there are exactly $n$ such cliques.
Any two intersect in exactly $k-1$ vertices, hence are adjacent in $\TJ{k}{H}$.
Therefore, $\TJ{k}{H}\cong K_n$.

For paths, $P_2=K_2$, so the complete-graph formula gives $\KTJ(P_2)=\{k:k\ge 1\}$.
Now let $n\ge 3$.
Since $P_n$ is not complete, \cref{prop:TJ1} gives $1\notin\KTJ(P_n)$.
Also $P_{n+1}$ is triangle-free and satisfies $L(P_{n+1})\cong P_n$.
Therefore, \cref{lem:TJ-line-lift} yields a witness for every $k\ge 2$, so
\[
\KTJ(P_n)=\{k:k\ge 2\}.
\]
For cycles, the case $C_3=K_3$ again follows from complete graphs.
If $n\ge 4$, then $C_n$ is not complete, so $1\notin \KTJ(C_n)$ by \cref{prop:TJ1}.
Moreover $C_n$ is triangle-free and $L(C_n)\cong C_n$, so \cref{lem:TJ-line-lift} gives
\[
\KTJ(C_n)=\{k:k\ge 2\}.
\]

For complete bipartite graphs, the first three cases are exactly
\[
K_{1,1}=P_2,\qquad K_{1,2}=P_3,\qquad K_{2,2}=C_4,
\]
so the corresponding formulas above already give the values for stars with $n\le 2$.
Now assume $2\le m\le n$ and $n\ge 3$.
Suppose to the contrary that $K_{m,n}\cong \TJ{k}{H}$ for some graph $H$ and some $k\ge 1$.
Because $K_{m,n}$ is not complete, \cref{prop:TJ1} gives $k\neq 1$, hence $k\ge 2$.
Choose two vertices in the part of size $m$.
They are nonadjacent and have exactly $n$ common neighbors.
Since $n\ge 3$, this contradicts \cref{lem:TJ-trianglefree-common-neighbors}.
Thus,
\[
\KTJ(K_{m,n})=\varnothing
\qquad (2\le m\le n\text{ and } n\ge 3).
\]
It remains to classify the stars $K_{1,n}$ with $n\ge 3$.
Because $K_{1,n}$ is not complete, \cref{prop:TJ1} gives $1\notin \KTJ(K_{1,n})$.
If $\TJ{k}{H}\cong K_{1,n}$, then \cref{lem:TJ-trianglefree-degree-bound} yields
\[
n=\Delta(K_{1,n})\le k,
\]
so no value $k<n$ is feasible.
Conversely, let $k\ge n$.
Start with a $k$-clique
\[
Q=\{q_1,\ldots,q_k\},
\]
and add vertices $x_1,\ldots,x_n$.
For each $i\in\{1,\ldots,n\}$, join $x_i$ to every vertex of $Q$ except $q_i$, and add no edges among the $x_i$.
Then the $k$-cliques of the resulting graph are exactly
\[
Q
\quad\text{and}\quad
Q-q_i+x_i\qquad (1\le i\le n).
\]
In $\TJ{k}{H}$, the central clique $Q$ is adjacent to each $Q-q_i+x_i$, while distinct leaves intersect in only $k-2$ vertices and are therefore nonadjacent.
Hence, $\TJ{k}{H}\cong K_{1,n}$.
This argument proves
\[
\KTJ(K_{1,n})=\{k:k\ge n\}
\qquad (n\ge 3).
\]

Next, consider book graphs.
The case $p=1$ is again $K_3$, so the complete-graph formula gives
\[
\KTJ(B_1)=\{k:k\ge 1\}.
\]
Now let $p\ge 2$.
Because $B_p$ is not complete, \cref{prop:TJ1} gives $1\notin \KTJ(B_p)$.
We first consider the case $k=2$.
If $p=2$, then $B_2$ is the diamond graph.
Let $X$ be the graph obtained from a triangle $abc$ by adding a new vertex $d$ adjacent only to $a$.
Then $L(X)\cong B_2$, and therefore \cref{lem:TJ2-line-graph} gives $2\in \KTJ(B_2)$.
Now assume that $p\ge 3$.
Suppose to the contrary that $B_p\cong \TJ{2}{H}$ for some graph $H$.
By \cref{lem:TJ2-line-graph}, this means $B_p\cong L(H)$.
Fix one of the two vertices of degree $p+1$ in $B_p$.
Its neighborhood contains the independent set formed by any three page vertices, so $B_p$ contains an induced claw.
On the other hand, if $e=xy$ is any edge of $H$, then the neighbors of $e$ in $L(H)$ are exactly the edges incident with $x$ or with $y$ other than $e$ itself.
Thus, the neighborhood of every vertex of $L(H)$ is the union of at most two cliques, and in particular $L(H)$ is claw-free.
This contradiction shows that $2\notin \KTJ(B_p)$ for all $p\ge 3$.

It remains to exclude every $k\ge 3$ when $p\ge 2$.
Suppose to the contrary that $B_p\cong \TJ{k}{H}$ for some graph $H$ and some $k\ge 3$.
Write the vertices of $B_p$ as $u,v,w_1,\dots,w_p$, where $uv$ is the common edge of the $p$ page-triangles.
Let $U$ and $V$ be the $k$-cliques of $H$ corresponding to $u$ and $v$.
For each $i\in\{1,\dots,p\}$, the triangle $\{u,v,w_i\}$ is a maximal clique of $B_p$ of size $3$.
Since every top clique of $\TJ{k}{H}$ has size exactly $k+1\ge 4$ by \cref{cor:camera-TJ-maximal-cliques}, each such triangle must be a star clique.
Hence, for each $i$, the clique corresponding to $w_i$ contains the common $(k-1)$-subset $U\cap V$.
Therefore, the vertices corresponding to $u,v,w_1,\dots,w_p$ all belong to the same star clique of $\TJ{k}{H}$.
In particular, the vertices corresponding to $w_1$ and $w_2$ are adjacent, contradicting the fact that $w_1$ and $w_2$ are nonadjacent in $B_p$.
Therefore,
\[
\KTJ(B_p)=
\begin{cases}
\{k:k\ge 1\}, & p=1,\\
\{2\}, & p=2,\\
\varnothing, & p\ge 3.
\end{cases}
\]

Finally, consider friendship graphs.
The case $p=1$ is again $K_3$.
Now let $p\ge 2$.
To show the upper bound, suppose $\TJ{k}{H}\cong F_p$ and let $C_0$ be the $k$-clique corresponding to the center of $F_p$.
This central vertex has degree $2p$.
Each neighbor of $C_0$ shares one of the $k$ different $(k-1)$-subsets of $C_0$.
By the pigeonhole principle, some $(k-1)$-subset is shared by at least $\lceil 2p/k\rceil$ neighbors.
Together with $C_0$, these cliques form a clique of size at least $\lceil 2p/k\rceil+1$ in $\TJ{k}{H}$.
Since $\omega(F_p)=3$, we must have $\lceil 2p/k\rceil+1\le 3$, hence $p\le k$.
So $\KTJ(F_p)\subseteq \{k:k\ge p\}$.
For the reverse inclusion, fix $k\ge p$.
Let $C=\{v_1,\ldots,v_k\}$ be a core clique, and for each $1\le i\le p$ let
\[
S_i=C\setminus\{v_i\}.
\]
Add two new vertices $w_{i,1},w_{i,2}$, each adjacent exactly to the vertices of $S_i$.
Then the $k$-cliques are exactly the core clique $C$ together with the $2p$ cliques
\[
Q_{i,j}=S_i\cup\{w_{i,j}\}
\qquad (1\le i\le p,\ j\in\{1,2\}).
\]
The clique $C$ is adjacent to all $Q_{i,j}$, each pair $Q_{i,1},Q_{i,2}$ is adjacent, and cliques from different indices intersect in only $k-2$ vertices and are therefore nonadjacent.
Thus, $\TJ{k}{H}\cong F_p$.
Hence, $\KTJ(F_p)=\{k:k\ge p\}$ for all $p\ge 2$.
\end{proof}

Before determining the complement families on the TJ side, we need the following proposition for disjoint unions of cliques.

\begin{proposition}\label{prop:camera-complement-clique-unions-TJ}
Let
\[
G=\bigsqcup_{i=1}^t K_{n_i}
\qquad (t\ge 1,\ n_i\ge 1).
\]
If $t\ge 2$, then
\[
\KTJ(G)=\{k:k\ge 2\}.
\]
\end{proposition}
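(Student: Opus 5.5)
The plan has two parts: exclude $k=1$, then build an explicit witness for every $k\ge 2$.

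\textbf{Excluding $k=1$.} By \cref{prop:TJ1}, $\TJ{1}{H}$ is always complete. When $t\ge 2$, the graph $G$ is disconnected, so it is not complete. Hence $1\notin\KTJ(G)$.

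\textbf{Construction for $k\ge 2$.} We imitate the complete-graph witness from \cref{thm:warmup-tj}, but we must keep the $t$ components from interacting. The tempting approach is to take a disjoint union of the complete-graph witnesses for $K_{n_1},\dots,K_{n_t}$, using one component of $H$ for each $i$. This works in general: two $k$-cliques lying in different components of $H$ are disjoint. They therefore intersect in $0$ vertices, and $0\ne k-1$ because $k\ge 2$. So they are nonadjacent in $\TJ{k}{H}$.

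Concretely, for each $i$ let $H_i$ be the complete $k$-partite graph with $k-1$ singleton parts and one part of size $n_i$. When $n_i=1$ this is just $K_k$. Put $H=\bigsqcup_{i=1}^t H_i$. Every clique of $H$ lies inside a single component, so the $k$-cliques of $H$ are exactly the $k$-cliques of the $H_i$. By the computation in \cref{thm:warmup-tj}, each $H_i$ contributes exactly $n_i$ such cliques, and they are pairwise TJ-adjacent because they share the $k-1$ singleton vertices. Cliques from different $H_i$ are disjoint, so they are nonadjacent since $k-1\ge 1$. Therefore
\[
\TJ{k}{H}\cong \bigsqcup_{i=1}^t K_{n_i}=G,
\]
and $\{k:k\ge 2\}\subseteq\KTJ(G)$.

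\textbf{Main obstacle.} The only delicate point is checking that no TJ adjacency crosses components. Unlike TS, TJ adjacency does not require any edge of $H$ between the exchanged vertices, so one must use the intersection-size criterion rather than connectivity of $H$. That criterion is exactly where $k\ge 2$ is needed: for $k=1$, disjoint singletons would be adjacent, which matches the exclusion of $k=1$. Together the two parts give $\KTJ(G)=\{k:k\ge 2\}$.
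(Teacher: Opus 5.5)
Your proposal is correct and follows essentially the same route as the paper. You exclude $k=1$ via \cref{prop:TJ1}, then take the disjoint union of the complete-graph witnesses from \cref{thm:warmup-tj}; your explicit check that cliques in different components are disjoint, hence not TJ-adjacent because $k-1\ge 1$, spells out what the paper compresses into ``every $k$-clique of $H$ lies in a single connected component.''
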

\begin{proof}
If $t\ge 2$ then $G$ is not complete, so $1\notin \KTJ(G)$ by \cref{prop:TJ1}.
Now fix $k\ge 2$.
For each $i$, the complete-graph formula in \cref{thm:warmup-tj} gives a graph $H_i$ with $\TJ{k}{H_i}\cong K_{n_i}$.
Let $H:=\bigsqcup_{i=1}^t H_i$.
Every $k$-clique of $H$ lies in a single connected component of $H$, so
\[
\TJ{k}{H}\cong \bigsqcup_{i=1}^t \TJ{k}{H_i}\cong \bigsqcup_{i=1}^t K_{n_i}=G.
\]
Thus, every $k\ge 2$ belongs to $\KTJ(G)$.
\end{proof}

We are now ready to determine the corresponding complement basic-family results on the TJ side.
\begin{theorem}\label{thm:camera-tj-basic-complements}
The following exact formulas hold.
\begin{enumerate}[(1)]
\item
\[
\KTJ(\overline{K_n})=
\begin{cases}
\{k:k\ge 1\}, & n=1,\\
\{k:k\ge 2\}, & n\ge 2.
\end{cases}
\]
\item
\[
\KTJ(\overline{K_{1,n}})=\{k:k\ge 2\}
\qquad (n\ge 1).
\]
\item For $2\le m\le n$,
\[
\KTJ(\overline{K_{m,n}})=\{k:k\ge 2\}.
\]
\item
\[
\KTJ(\overline{B_p})=\{k:k\ge 2\}
\qquad (p\ge 1).
\]
\item
\[
\KTJ(\overline{P_n})=
\begin{cases}
\{k:k\ge 2\}, & 2\le n\le 5,\\
\varnothing, & n\ge 6.
\end{cases}
\]
\item
\[
\KTJ(\overline{C_n})=
\begin{cases}
\{k:k\ge 2\}, & 4\le n\le 6,\\
\varnothing, & n\ge 7.
\end{cases}
\]
\item
\[
\KTJ(\overline{F_p})=
\begin{cases}
\{k:k\ge 2\}, & p\le 2,\\
\{2\}, & p=3,\\
\varnothing, & p\ge 4.
\end{cases}
\]
\end{enumerate}
\end{theorem}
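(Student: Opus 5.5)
For items (1)--(4) I would not do any new work: each target is a disjoint union of cliques, so \cref{prop:camera-complement-clique-unions-TJ} applies directly. We have $\overline{K_n}=nK_1$ (with $n=1$ giving $K_1$, covered by \cref{thm:warmup-tj}), $\overline{K_{1,n}}=K_n\sqcup K_1$, $\overline{K_{m,n}}=K_m\sqcup K_n$ and $\overline{B_p}=K_p\sqcup 2K_1$. In every case $t\ge 2$, so the feasibility set is $\{k:k\ge 2\}$. For the remaining items, $1$ is excluded by \cref{prop:TJ1} whenever the target is not complete. Positive values will come from \cref{lem:TJ-line-lift}, together with the observation that disjoint unions of witnesses give disjoint unions of $\sfTJ_k$-graphs.

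For the positive parts of (5)--(7) I would exhibit triangle-free graphs $X$ with the right line graph. We have $\overline{P_2}=2K_1$ and $\overline{P_3}=K_2\sqcup K_1$, both clique unions. Next, $\overline{P_4}\cong P_4=L(P_5)$. Also $\overline{P_5}$ is the house graph, which equals $L(X)$ for $X$ a $4$-cycle with one pendant edge. For cycles, $\overline{C_4}=2K_2$, $\overline{C_5}\cong C_5=L(C_5)$, and $\overline{C_6}$ is the triangular prism, the $2\times 3$ rook graph $L(K_{2,3})$. For friendship complements, $\overline{F_p}=K_1\sqcup\CP{p}$. The case $p=1$ is $3K_1$. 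The case $p=2$ is $K_1\sqcup L(C_4)$, realized for every $k\ge 2$ by a disjoint union of a $K_k$ and $H_k(C_4)$. For $p=3$ and $k=2$, I would reuse $H=K_2\sqcup K_4$, giving $K_1\sqcup L(K_4)$.

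The exclusions for $\overline{F_p}$ I would do with \cref{cor:camera-TJ-maximal-cliques}, through an edge-counting argument. For an edge $AB$ of $\TJ{k}{H}$, any maximal clique of size $\ge 3$ through $A,B$ is a star clique with core $A\cap B$ or a top clique with top $A\cup B$. Hence each edge lies in at most one maximal star clique and at most one top clique, and top cliques have size exactly $k+1$. In $\CP{p}$ with $p\ge 3$, every maximal clique has size $p$, and every edge lies in $2^{p-2}\ge 2$ of them. If $p\ne k+1$, all of them would have to be stars with the same core, a contradiction. If $p=k+1$, at most two are allowed, which forces $2^{p-2}\le 2$, i.e.\ $p=3$, $k=2$. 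This yields $\{2\}$ for $p=3$ and $\varnothing$ for $p\ge 4$.

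For $\overline{P_n}$ with $n\ge 6$ and $\overline{C_n}$ with $n\ge 7$, I would use \cref{lem:TJ-neighborhood-line-bipartite} and \cref{lem:line-bip-common-neighbors-clique}. Every vertex neighborhood is $L(B)$ with $B$ bipartite, hence diamond-free, since two adjacent vertices have clique common neighborhood. The neighborhood of an endpoint of $P_n$ in $\overline{P_n}$ is $\overline{P_{n-2}}$. The neighborhood of a vertex in $\overline{C_n}$ is $\overline{P_{n-3}}$. Moreover, $\overline{P_m}$ contains a diamond as soon as $P_m$ has an induced $K_2\sqcup 2K_1$, which holds for $m\ge 6$ (take $\{1,2,4,6\}$). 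This settles $\overline{P_n}$ for $n\ge 8$ and $\overline{C_n}$ for $n\ge 9$.

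The main obstacle is the small cases $\overline{P_6},\overline{P_7},\overline{C_7},\overline{C_8}$. For these, I would first apply the edge-in-maximal-cliques argument above to the whole graph: find an edge lying in two maximal cliques of equal size $\ne k+1$, or in three maximal cliques. If that fails for some $k$, I would fall back on choosing a vertex whose neighborhood is not diamond-free, for example an interior vertex of $P_n$ whose non-neighborhood is $P_a\sqcup P_b$. Failing that, I would do an explicit analysis of the bipartite graph $B_A$ forced by the neighborhood.
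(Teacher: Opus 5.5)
\textbf{There is a genuine gap: the value $k=2$ for $\overline{P_6},\overline{P_7},\overline{C_7},\overline{C_8}$.} None of your three fallbacks closes it.

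At $k=2$ top cliques have size $3$, and in each of these four graphs every edge lies in at most two maximal cliques, so your edge test never fires. Propagating the forced star/top alternation along shared edges helps only partly. It disposes of $\overline{C_7}$, whose seven maximal triangles, linked by shared edges, form an odd cycle. It also disposes of $\overline{C_8}$: every triangle shares an edge with one of the two maximal $4$-cliques, which must be stars, so all triangles are tops, yet $\{1,4\}$ lies in two of them. For $\overline{P_6}$ and $\overline{P_7}$, however, the edge constraints admit a consistent labelling.

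Your diamond test fails because every vertex neighborhood in these graphs is one of $K_2\sqcup K_1$, $P_3$, $P_4$, $C_4$, the paw, or the house, and all of these are diamond-free. Your last resort, an analysis of $B_A$, is purely local and also cannot succeed. Each of these neighborhoods really is $L(B)$ for a bipartite $B$ with a two-vertex side; for example $P_4=L(P_5)$, and the house is the line graph of a $4$-cycle with a pendant edge. So no single $B_A$ yields a contradiction.

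The missing idea is global. By \cref{lem:TJ2-line-graph}, $\TJ{2}{H}\cong L(H)$, so you must show that $\overline{P_6}$ is not a line graph. Label it so that the nonedges are $e_ie_{i+1}$, and realize $e_1,e_2$ as disjoint edges $ab,cd$. Then $e_4,e_5,e_6$ must be cross edges, with $e_4,e_5$ disjoint, say $ac,bd$. The only cross edge disjoint from $bd$ is $ac=e_4$, so there is nothing left for $e_6$. Finally, use that induced subgraphs of line graphs are line graphs: $\overline{P_6}$ is induced in $\overline{P_7}$, $\overline{C_7}$ and $\overline{C_8}$, so none of the four is a $\sfTJ_2$-graph.

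For $k\ge 3$ your first test does succeed, and more cheaply than the paper. The edge $\{1,3\}$ lies in the two maximal triangles $\{1,3,5\},\{1,3,6\}$ of both $\overline{P_6}$ and $\overline{C_7}$. The edge $\{1,4\}$ lies in the two maximal triangles $\{1,4,6\},\{1,4,7\}$ of both $\overline{P_7}$ and $\overline{C_8}$. Since top cliques have size $k+1\ge 4$, both triangles would be star cliques with the same core, which is impossible. This kills every $k\ge 3$ at once and avoids the paper's separate $k=3$ reconstructions for $\overline{P_7}$ and $\overline{C_8}$. You should exhibit these edges explicitly rather than leave the test conditional.

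The rest of your proposal is sound:
\begin{itemize}
\item Diamond-freeness of $L(B)$ for bipartite $B$ (via \cref{lem:line-bip-common-neighbors-clique}) is a more direct substitute for the paper's ``not a line graph'' argument in the large cases $n\ge 8$ and $n\ge 9$.
\item Your count of $2^{p-2}$ maximal cliques through each edge of $\CP{p}$ is a correct, unified replacement for the paper's two separate arguments for $p=3$ and $p\ge 4$.
\end{itemize}
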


\begin{proof}
The case $\overline{K_1}=K_1$ in (1) follows from \cref{thm:warmup-tj}.
For the remaining cases in (1)--(4), the decompositions
\[
\overline{K_n}=nK_1,
\qquad
\overline{K_{1,n}}=K_n\sqcup K_1,
\qquad
\overline{K_{m,n}}=K_m\sqcup K_n,
\qquad
\overline{B_p}=K_p\sqcup 2K_1
\]
and \cref{prop:camera-complement-clique-unions-TJ} give the displayed formulas.

For paths, we have
\[
\overline{P_2}=2K_1,
\qquad
\overline{P_3}=K_2\sqcup K_1,
\qquad
\overline{P_4}=P_4,
\]
so the exact formulas for $2\le n\le 4$ follow from \cref{prop:camera-complement-clique-unions-TJ,thm:warmup-tj}.
For $\overline{P_5}$, let $R$ be the bipartite graph with bipartition $\{u,v\}\sqcup\{x,y,z\}$ and edge set
\[
ux,\ uy,\ uz,\ vx,\ vy.
\]
The graph $R$ is triangle-free, and its line graph is $\overline{P_5}$.
Hence, \cref{lem:TJ-line-lift} gives
\[
\{k:k\ge 2\}\subseteq \KTJ(\overline{P_5}).
\]
Since $\overline{P_5}$ is not complete, \cref{prop:TJ1} excludes $k=1$.
Therefore,
\[
\KTJ(\overline{P_5})=\{k:k\ge 2\}.
\]

We next show that $\overline{P_6}$ is not a line graph.
Label its vertices by $e_1,\dots,e_6$ so that $e_ie_{i+1}\notin E(\overline{P_6})$ for $1\le i\le 5$, and every other pair is adjacent.
If $\overline{P_6}\cong L(R')$, then $e_1$ and $e_2$ correspond to disjoint edges of $R'$, say $ab$ and $cd$ with $a,b,c,d$ pairwise distinct.
Because $e_4,e_5,e_6$ are adjacent to both $e_1$ and $e_2$, each must be one of the four cross-edges $ac,ad,bc,bd$.
Since $e_4$ and $e_5$ are nonadjacent in $\overline{P_6}$, they are disjoint as edges of $R'$, so after relabeling we may assume $e_4=ac$ and $e_5=bd$.
But then $e_6$ must be adjacent to $e_1$, $e_2$, and $e_4$, while remaining nonadjacent to $e_5$.
Among the four cross-edges, the only one disjoint from $bd$ is $ac$, which is already used by $e_4$.
This contradiction shows that $\overline{P_6}$ is not a line graph.
We use the elementary hereditary fact that induced subgraphs of line graphs are line graphs: if $Y=L(R)$ and $X\subseteq V(Y)$ corresponds to an edge set $F\subseteq E(R)$, then $Y[X]=L(R[F])$, where $R[F]$ is the subgraph of $R$ with edge set $F$.
In particular, $\overline{P_m}$ is not a line graph for every $m\ge 6$, because $\overline{P_m}$ contains an induced copy of $\overline{P_6}$ whenever $m\ge 6$.

Now let $n\ge 6$ and suppose $\overline{P_n}\cong \TJ{k}{H}$.
Since $\overline{P_n}$ is not complete, \cref{prop:TJ1} excludes $k=1$.
The case $k=2$ is impossible because $\overline{P_n}$ is not a line graph.
If $n\ge 8$, let $x$ be an endpoint of $P_n$.
Then
\[
N_{\overline{P_n}}(x)\cong \overline{P_{n-2}},
\]
and $n-2\ge 6$.
By \cref{lem:TJ-neighborhood-line-bipartite}, the neighborhood of the corresponding vertex of $\TJ{k}{H}$ must be a line graph.
This contradicts the previous paragraph.
Thus, only $n\in\{6,7\}$ remain.

Assume first that $n=6$.
The cliques $\{1,3,5\}$ and $\{1,3,6\}$ are maximal triangles of $\overline{P_6}$.
If $k\ge 3$, then every maximal top clique in $\TJ{k}{H}$ has size $k+1\ge 4$.
Therefore, each of these two triangles must be a star clique.
However, the two maximal triangles $\{1,3,5\}$ and $\{1,3,6\}$ of $\overline{P_6}$ share the edge $\{1,3\}$.
A star clique containing the edge $\{1,3\}$ is uniquely determined by the common $(k-1)$-subset of the corresponding adjacent vertices, so two distinct star cliques cannot share an edge.
This contradiction rules out every $k\ge 3$.

Assume next that $n=7$.
If $k\ge 4$, then the $4$-clique $\{1,3,5,7\}$ is maximal in $\overline{P_7}$, because each of $2,4,6$ misses at least one vertex of this clique.
This $4$-clique cannot be top-type, since every maximal top clique in $\TJ{k}{H}$ has size $k+1\ge 5$ by \cref{cor:camera-TJ-maximal-cliques}.
Therefore, it must be a star clique.
But the maximal triangle $\{1,3,6\}$ shares the edge $\{1,3\}$ with it, again impossible.
It remains to exclude $k=3$.
Let $A_1,A_3,A_5,A_7$ be the four vertices corresponding to the clique $\{1,3,5,7\}$.
This $4$-clique cannot be star-type. Indeed, otherwise there would exist a $2$-set $T$ and pairwise distinct elements $x_1,x_3,x_5,x_7\notin T$ such that
\[
A_1=T+x_1,\qquad A_3=T+x_3,\qquad A_5=T+x_5,\qquad A_7=T+x_7.
\]
Because vertex $2$ is adjacent exactly to $5$ and $7$ inside this $4$-clique, we would have
\[
A_2=\{u,x_5,x_7\}
\]
for some $u\in T$. Similarly, because vertex $6$ is adjacent exactly to $1$ and $3$ inside this $4$-clique, we would have
\[
A_6=\{v,x_1,x_3\}
\]
for some $v\in T$. Then $|A_2\cap A_6|\le 1$, contradicting the edge $26$ of $\overline{P_7}$. Thus, this $4$-clique must be top-type.
Hence, there exists a $4$-set $U=\{a,b,c,d\}$ such that
\[
A_1=U-a,\qquad A_3=U-b,\qquad A_5=U-c,\qquad A_7=U-d.
\]
Because vertex $2$ of $\overline{P_7}$ is adjacent exactly to $5$ and $7$ inside this $4$-clique, we have
\[
A_2=\{a,b,x_2\}
\]
for some $x_2\notin U$.
Similarly,
\[
A_4=\{b,c,x_4\},\qquad A_6=\{c,d,x_6\}
\]
for some $x_4,x_6\notin U$.
Since $2$ is adjacent to $4$, we have $|A_2\cap A_4|=2$, and therefore $x_2=x_4$.
Since $4$ is adjacent to $6$, we also have $x_4=x_6$.
Hence, $x_2=x_4=x_6$.
But then
\[
|A_2\cap A_6|=1,
\]
contrary to the edge $26$ of $\overline{P_7}$.
Therefore, $\KTJ(\overline{P_n})=\varnothing$ for all $n\ge 6$.

For cycles, we have
\[
\overline{C_4}=2K_2
\qquad\text{and}\qquad
\overline{C_5}=C_5,
\]
Thus the first two exact formulas follow from \cref{prop:camera-complement-clique-unions-TJ} and \cref{thm:warmup-tj}.
Also $\overline{C_6}$ is the triangular prism, that is, the line graph of the triangle-free graph $K_{3,2}$.
Hence, \cref{lem:TJ-line-lift} gives
\[
\{k:k\ge 2\}\subseteq \KTJ(\overline{C_6}).
\]
Since $\overline{C_6}$ is not complete, \cref{prop:TJ1} excludes $k=1$.
Therefore,
\[
\KTJ(\overline{C_6})=\{k:k\ge 2\}.
\]
Now let $n\ge 7$ and suppose $\overline{C_n}\cong \TJ{k}{H}$.
Since $\overline{C_n}$ is not complete, \cref{prop:TJ1} excludes $k=1$.
The case $k=2$ is impossible because $\overline{C_n}$ contains an induced copy of $\overline{P_6}$.
If $n\ge 9$, fix any vertex $x$ of $\overline{C_n}$.
Then
\[
N_{\overline{C_n}}(x)\cong \overline{P_{n-3}},
\]
and $n-3\ge 6$.
By \cref{lem:TJ-neighborhood-line-bipartite}, this neighborhood must be a line graph, contradiction.
Thus, only $n\in\{7,8\}$ remain.

Assume first that $n=7$.
The cliques $\{1,3,5\}$ and $\{1,3,6\}$ are maximal triangles of $\overline{C_7}$.
If $k\ge 3$, each of these maximal triangles in $\TJ{k}{H}$ must be a star clique.
However, the two maximal triangles $\{1,3,5\}$ and $\{1,3,6\}$ of $\overline{C_7}$ share the edge $\{1,3\}$, which is impossible for two distinct star cliques.
This contradiction excludes every $k\ge 3$.

Assume next that $n=8$.
If $k\ge 4$, then the $4$-clique $\{1,3,5,7\}$ is maximal in $\overline{C_8}$, because each of $2,4,6,8$ misses at least one vertex of this clique.
This $4$-clique cannot be top-type, since every maximal top clique in $\TJ{k}{H}$ has size $k+1\ge 5$ by \cref{cor:camera-TJ-maximal-cliques}.
Therefore, it must be a star clique, while the maximal triangle $\{1,3,6\}$ shares the edge $\{1,3\}$ with it.
This configuration is impossible.
It remains to exclude $k=3$.
Let $A_1,A_3,A_5,A_7$ be the four vertices corresponding to the clique $\{1,3,5,7\}$.
This $4$-clique cannot be star-type. Indeed, otherwise there would exist a $2$-set $T$ and pairwise distinct elements $x_1,x_3,x_5,x_7\notin T$ such that
\[
A_1=T+x_1,\qquad A_3=T+x_3,\qquad A_5=T+x_5,\qquad A_7=T+x_7.
\]
Because vertex $2$ is adjacent exactly to $5$ and $7$ inside this $4$-clique, while vertex $6$ is adjacent exactly to $1$ and $3$, we would have
\[
A_2=\{u,x_5,x_7\},\qquad A_6=\{v,x_1,x_3\}
\]
for some $u,v\in T$. Then $|A_2\cap A_6|\le 1$, contradicting the edge $26$ of $\overline{C_8}$. Thus, this $4$-clique must be top-type, so there exists a $4$-set $U=\{a,b,c,d\}$ such that
\[
A_1=U-a,\qquad A_3=U-b,\qquad A_5=U-c,\qquad A_7=U-d.
\]
Because vertex $2$ is adjacent exactly to $5$ and $7$ inside this clique, while vertex $6$ is adjacent exactly to $1$ and $3$, we obtain
\[
A_2=\{a,b,x_2\},\qquad A_6=\{c,d,x_6\}
\]
for some $x_2,x_6\notin U$.
Then $|A_2\cap A_6|\le 1$, contradicting the edge $26$ of $\overline{C_8}$.
Therefore, $\KTJ(\overline{C_n})=\varnothing$ for all $n\ge 7$.

For friendship complements, note that
\[
\overline{F_p}=K_1\sqcup \CP{p}.
\]
For $p=1$, we have $\overline{F_1}=3K_1$, so (1) gives $\KTJ(\overline{F_1})=\{k:k\ge 2\}$.
For $p=2$, fix $k\ge 2$.
Let $H_1$ be any graph with $\TJ{k}{H_1}\cong K_1$, given by \cref{thm:warmup-tj}.
Because $C_4$ is triangle-free and satisfies $L(C_4)\cong C_4$, \cref{lem:TJ-line-lift} gives a graph $H_2$ with $\TJ{k}{H_2}\cong C_4$.
Then
\[
\TJ{k}{H_1\sqcup H_2}\cong K_1\sqcup C_4=\overline{F_2},
\]
so $\{k:k\ge 2\}\subseteq \KTJ(\overline{F_2})$.
Since $\overline{F_2}$ is not complete, \cref{prop:TJ1} excludes $k=1$.
Therefore,
\[
\KTJ(\overline{F_2})=\{k:k\ge 2\}.
\]
For $p=3$, the value $2$ is feasible. Indeed, if $H=K_2\sqcup K_4$, then
\[
\TJ{2}{H}=L(H)=K_1\sqcup L(K_4)=K_1\sqcup \CP{3}=\overline{F_3}.
\]
Since $\overline{F_3}$ is not complete, \cref{prop:TJ1} excludes $k=1$.
Now suppose $\overline{F_3}\cong \TJ{k}{H}$ with $k\ge 3$.
Its nontrivial component is $\CP{3}$, and $\omega(\CP{3})=3$.
Hence, every maximal triangle of $\CP{3}$ would have to be a star clique.
But every edge of $\CP{3}$ lies in two triangles, whereas two distinct star cliques cannot share an edge.
This contradiction shows that $k\ge 3$ is impossible, and therefore $\KTJ(\overline{F_3})=\{2\}$.

Finally, let $p\ge 4$ and suppose $\overline{F_p}\cong \TJ{k}{H}$.
Because $\overline{F_p}$ is not complete, \cref{prop:TJ1} gives $k\ne 1$.
Hence, $k\ge 2$.
Fix a nonisolated vertex $x$ of $\overline{F_p}$.
Then $x$ lies in the $\CP{p}$-component, so
\[
N_{\overline{F_p}}(x)\cong \CP{p-1}.
\]
By \cref{lem:TJ-neighborhood-line-bipartite}, this neighborhood must be a line graph of a bipartite graph.
On the other hand, choose any adjacent pair in $\CP{p-1}$.
Its common neighborhood contains one full matched pair of $\CP{p-1}$, and hence is not a clique because $p-1\ge 3$.
This contradicts \cref{lem:line-bip-common-neighbors-clique}.
Therefore, $\KTJ(\overline{F_p})=\varnothing$ for all $p\ge 4$.
This completes the proof.
\end{proof}

\subsection{Johnson Graphs}
\label{sec:TJ-Johnson}

We now turn to Johnson graphs on the TJ side.
The next results determine $\KTJ(J(n,r))$ for every nontrivial Johnson graph $J(n,r)$.
The section is organized by the cases appearing in the final theorem: first the complete-graph levels $r=1$ and $r=n-1$, then the stable rank-$2$ family, the stable higher-rank case $n>2r\ge 6$, and finally the boundary case $n=2r\ge 4$.

\begin{theorem}\label{thm:camera-tj-johnson}
For every pair of integers $n\ge 2$ and $1\le r\le n-1$, let
\[
s:=\min\{r,n-r\}.
\]
Then
\[
\KTJ(J(n,r))=
\begin{cases}
\{k:k\ge 1\}, & s=1,\\
\{s\}, & s\ge 2\text{ and }n=2s,\\
\{s,n-s\}, & s\ge 2\text{ and }n>2s.
\end{cases}
\]
\end{theorem}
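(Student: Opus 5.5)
For $s=1$, Johnson duality gives $J(n,r)\cong J(n,1)\cong K_n$, and the complete-graph formula of \cref{thm:warmup-tj} yields $\KTJ(J(n,r))=\{k:k\ge 1\}$. So assume $s\ge 2$, and by Johnson duality take $r=s\le n/2$. The lower bound is immediate. Since $\TJ{k}{K_n}\cong J(n,k)$, the witnesses $\TJ{r}{K_n}\cong J(n,r)$ and $\TJ{n-r}{K_n}\cong J(n,n-r)\cong J(n,r)$ give $\{r,n-r\}\subseteq\KTJ(J(n,r))$, and this set is $\{s\}$ when $n=2s$. The value $k=1$ is excluded by \cref{prop:TJ1}, because $J(n,r)$ with $2\le r\le n-2$ is not complete.

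For the upper bound, assume $J(n,r)\cong\TJ{k}{H}$ with $k\ge 2$, and fix an edge $AB$ of $J(n,r)$. Write $A=S+a$ and $B=S+b$ with $|S|=r-1$. The edge lies in the Johnson star with core $S$, of size $n-r+1\ge 3$, and in the Johnson top with top $S+a+b$, of size $r+1\ge 3$.

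\begin{claim}
The Johnson star with core $S$ and the Johnson top with top $S+a+b$ are distinct maximal cliques of $J(n,r)$.
\end{claim}

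They are distinct because their intersection is exactly $\{A,B\}$ while both have size at least $3$. For maximality I would use the classification of Johnson cliques \cite{ShuldinerO22}. Alternatively, argue directly: a vertex adjacent to all of $S+a,S+b,S+c$ must contain $S$, and a vertex adjacent to all of $S+a+b-x$ for three choices of $x$ must lie inside $S+a+b$.

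Transfer both cliques to $\TJ{k}{H}$ through the isomorphism. They stay maximal cliques of size at least $3$. By \cref{cor:camera-TJ-maximal-cliques} and \cref{thm:camera-TJ-clique-structure}, each is exactly one of two kinds: a star clique $\mathcal C_H(T)$, or a top clique $\mathcal T_H(U)$ of size $k+1$. If both were star cliques, each would contain the two $k$-cliques $A',B'$ corresponding to $A,B$. The core of each would then be the $(k-1)$-set $A'\cap B'$, so the two cliques would coincide. If both were top cliques, each would have $U=A'\cup B'$, and they would again coincide. Hence exactly one of the two is a top clique, and its size is $k+1$. Comparing with the Johnson side gives $k+1\in\{r+1,\,n-r+1\}$, that is, $k\in\{r,n-r\}=\{s,n-s\}$. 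This proves both the boundary case $n=2s$ and the stable case $n>2s$ at once.

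The step I expect to need the most care is the maximality claim, and specifically that both maximal cliques through the edge have size at least $3$. That size bound is what lets \cref{thm:camera-TJ-clique-structure} assign each of them a single, well-defined type. The hypothesis $s\ge 2$ is used exactly here: it gives $r+1\ge 3$ and $n-r+1\ge 3$.
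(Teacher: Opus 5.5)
Your proof is correct, and it takes a genuinely different and much shorter route than the paper.

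\textbf{The paper's route.} The paper splits $s\ge 2$ into three cases: the boundary case $n=2s$, the rank-$2$ case, and the stable case $s\ge 3$. Across these it combines three separate tools:
\begin{enumerate}
\item for small $k$, a star-count bound: a vertex of $\TJ{k}{H}$ lies in at most $k$ star cliques, while a Johnson vertex lies in $r$ or $2r$ maximum cliques that are too large to be tops;
\item common-core normalization lemmas, proved by induction on $s$, which rebuild a clique of size $n+k-r$ in $H$ and hence produce too many $k$-cliques;
\item for $k>n-r$, the observation that $H$ then has no $(k+1)$-clique, so every neighborhood is a disjoint union of cliques rather than $L(K_{r,n-r})$.
\end{enumerate}
It also relies on the cited classification of maximum cliques of Johnson graphs.

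\textbf{Your route.} You replace all of this with one local rigidity fact. In $\TJ{k}{H}$, an edge $A'B'$ determines the only possible star core, $A'\cap B'$, and the only possible top, $A'\cup B'$. Hence two distinct maximal cliques of size at least $3$ through a common edge must be of opposite types, and the top-type one has exactly $k+1$ vertices. Every edge of $J(n,r)$ lies in a Johnson star of size $n-r+1$ and a Johnson top of size $r+1$, and you check their maximality directly rather than via the full classification. This forces $k+1\in\{r+1,n-r+1\}$ and treats the boundary and stable regimes uniformly.

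\textbf{What each buys.} The paper uses only the star half of this observation, and only in a few complement cases (e.g.\ $\overline{P_6}$, $\overline{F_3}$) after first ruling out tops by size. Your two-type version gives a reusable criterion. If some edge of $G$ lies in two distinct maximal cliques of sizes $p,q\ge 3$, then $\KTJ(G)\subseteq\{p-1,q-1\}$. If it lies in three such cliques, then $\KTJ(G)=\varnothing$, which gives the book-graph exclusions immediately. The paper's longer route yields extra structural information about how star families must sit inside a witness, but none of it is needed for this theorem.
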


The proof is deferred until the end of the subsection, after the complete-graph case, the stable rank-$2$ case, the stable higher-rank case, and the boundary case have been established.

The remaining Johnson/TJ arguments follow a common pattern.
We first identify the relevant maximum cliques of the Johnson graph and how many of them pass through a fixed vertex.
We then compare their size with the top-clique size $k+1$ from \cref{cor:camera-TJ-maximal-cliques}.
Whenever the Johnson maximum cliques are too large to be top cliques in a TJ witness, they must all be realized as star cliques.
At that point we either contradict \cref{lem:camera-TJ-clique-at-most-k-stars} directly, or apply the common-core lemmas to reconstruct a rigid family inside the witness.

The next lemma bounds the number of star cliques through a vertex.
\begin{lemma}\label{lem:camera-TJ-clique-at-most-k-stars}
Let $H$ be a graph, let $k\ge 2$, and let $G=\TJ{k}{H}$.
Then every vertex of $G$ lies in at most $k$ distinct star cliques of $G$.
\end{lemma}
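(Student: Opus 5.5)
The plan is to show that every star clique of $G$ containing a vertex $A$ comes from one of the $k$ different $(k-1)$-subsets of $A$, and that distinct star cliques give distinct subsets. Let $A$ be a vertex of $G$, viewed as a $k$-clique of $H$. By definition, a star clique of $G$ is a set $\mathcal C_H(S)$ for some $(k-1)$-clique $S$ of $H$. If $A\in\mathcal C_H(S)$, then $S\subseteq A$ by the definition of $\mathcal C_H(S)$, so $S$ is one of the $k$ subsets $A-a$ with $a\in A$.

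This gives a map from the star cliques containing $A$ to the $(k-1)$-subsets of $A$. To each such star clique we attach a $(k-1)$-clique $S$ with $\mathcal C_H(S)$ equal to it. Then $S$ is a $(k-1)$-subset of $A$ by the previous step. The map is well defined once we fix one choice of $S$ per star clique. It is injective, because if two star cliques received the same subset $S$ then both equal $\mathcal C_H(S)$. Hence the number of distinct star cliques through $A$ is at most $\binom{k}{k-1}=k$.

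One subtlety concerns a star clique that arises from two different cores, i.e.\ $\mathcal C_H(S)=\mathcal C_H(S')$ with $S\ne S'$. This happens exactly when the family is the single set $A$, or when $S,S'$ are both contained in every member. This cannot break the bound: we count distinct cliques, not cores, and the map to cores (with a fixed choice per clique) remains injective. So I expect no real obstacle. The only point needing care is to phrase the count as an injection from distinct star cliques into $\binom{A}{k-1}$, rather than as a bijection with cores. The hypothesis $k\ge 2$ ensures that $(k-1)$-cliques are nonempty and matches the convention used in the subsequent TJ arguments.
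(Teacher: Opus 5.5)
Your proof is correct and follows the same approach as the paper: every star clique through $A$ has a core $S\subseteq A$, and distinct star cliques give distinct $(k-1)$-subsets of $A$, so there are at most $k$ of them. Your injection, which fixes one core per clique, just makes precise the paper's statement that distinct star cliques correspond to distinct subcliques. Your side remark about when $\mathcal C_H(S)=\mathcal C_H(S')$ is loosely phrased, since equality with $S\ne S'$ already forces the family to be the single $k$-set $S\cup S'$, but the argument never uses it.
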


\begin{proof}
Let $Q$ be a $k$-clique of $H$, viewed as a vertex of $G$.
If a star clique of $G$ contains $Q$, then by definition it has the form $\mathcal C_H(S)$ for some $(k-1)$-clique $S\subseteq Q$.
Distinct star cliques containing $Q$ correspond to distinct $(k-1)$-subcliques of $Q$.
Since a $k$-clique has exactly $k$ such subcliques, the vertex $Q$ lies in at most $k$ distinct star cliques.
\end{proof}

The star-count comparison used in the small-level exclusion is shown in \cref{fig:tj-star-count-obstruction}.
\begin{figure}[ht]
\centering
\begin{adjustbox}{max width=\textwidth}
\begin{tikzpicture}[x=1.05cm,y=1.05cm]
\tikzset{
  title/.style={font=\bfseries\small, align=center, minimum width=4.80cm,
    minimum height=.44cm, text width=4.60cm},
  note/.style={font=\footnotesize, align=center, minimum width=5.00cm,
    minimum height=.40cm, text width=4.70cm},
  smallnote/.style={font=\scriptsize, align=center, minimum width=2.30cm,
    minimum height=.34cm, text width=2.10cm},
  vertexpanel/.style={rectangle, draw=black, fill=white, line width=.7pt,
    minimum width=2.80cm, minimum height=.58cm, text width=2.60cm, inner sep=2.5pt,
    font=\footnotesize, align=center},
  familypanel/.style={rectangle, draw=black, fill=white, line width=.55pt,
    minimum width=1.55cm, minimum height=.52cm, text width=1.32cm, inner sep=2pt,
    font=\scriptsize, align=center},
  ellipsis/.style={font=\footnotesize, minimum width=.70cm, minimum height=.30cm,
    text width=.65cm, align=center},
  arrow/.style={->, draw=black, line width=.55pt},
  separator/.style={densely dashed, draw=black, line width=.45pt},
  countpanel/.style={rectangle, draw=black, fill=white, line width=.65pt,
    minimum width=4.00cm, minimum height=.62cm, text width=3.70cm, inner sep=3pt,
    font=\footnotesize, align=center}
}
\path[use as bounding box] (-.20,-.55) rectangle (12.20,5.65);

\node[title] at (2.55,5.28) {(a) one vertex in\\$\mathsf{TJ}_k(H)$};
\node[vertexpanel] (Q) at (2.55,4.28) {$Q=\{q_1,\ldots,q_k\}$};

\node[familypanel] (S1) at (.62,2.62) {$Q-q_1$};
\node[familypanel] (S2) at (1.90,2.62) {$Q-q_2$};
\node[ellipsis] at (3.05,2.62) {$\cdots$};
\node[familypanel] (Sk) at (4.45,2.62) {$Q-q_k$};

\foreach \x in {S1,S2,Sk} {
  \draw[arrow] (Q.south) -- (\x.north);
}
\node[note] at (2.55,1.72) {cores of star cliques through $Q$};
\node[countpanel] at (2.55,.72) {at most $k$ star cliques};

\draw[separator] (5.88,.42) -- (5.88,5.25);

\node[title] at (9.10,5.28) {(b) one vertex in $J(n,r)$};
\node[vertexpanel] (A) at (9.10,4.28) {$A=\{a_1,\ldots,a_r\}$};

\node[familypanel] (M1) at (7.05,2.62) {$\mathcal M_{A-a_1}$};
\node[familypanel] (M2) at (8.42,2.62) {$\mathcal M_{A-a_2}$};
\node[ellipsis] at (9.68,2.62) {$\cdots$};
\node[familypanel] (Mr) at (11.12,2.62) {$\mathcal M_{A-a_r}$};

\foreach \x in {M1,M2,Mr} {
  \draw[arrow] (A.south) -- (\x.north);
}
\node[note] at (9.10,1.72) {maximum Johnson stars through $A$};
\node[countpanel] at (9.10,.72) {exactly $r$ maximum cliques};

\node[note] at (5.88,-.20) {$k<r$: available $<{}$ required};

\end{tikzpicture}
\end{adjustbox}
\caption{The star-count obstruction used for small TJ levels of stable Johnson graphs.  A vertex $Q$ of $\mathsf{TJ}_k(H)$ lies in at most one star clique for each $(k-1)$-subclique $Q-q_i$.  In contrast, a vertex $A$ of $J(n,r)$ lies in exactly the $r$ maximum Johnson stars indexed by $A-a_i$.  The notation $\mathcal M_X$ is introduced formally in the proof of \cref{thm:camera-tj-no-middle}; here $\mathcal M_{A-a_i}$ denotes the Johnson star with core $A-a_i$.  The arrows indicate which deleted subclique or Johnson star is indexed by the displayed vertex, not an edge of a reconfiguration graph.  When these Johnson maximum cliques are too large to be top cliques in a hypothetical TJ witness, all of them must become star cliques through the image of $A$, contradicting $k<r$.}
\label{fig:tj-star-count-obstruction}
\end{figure}
\FloatBarrier

The complete-graph Johnson cases are again exactly the complete-graph witnesses.
\begin{corollary}\label{cor:camera-johnson-complete-levels-tj}
For every integer $n\ge 2$,
\[
\KTJ(J(n,1))=\KTJ(J(n,n-1))=\{k:k\ge 1\}.
\]
\end{corollary}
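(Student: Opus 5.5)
The plan is to reduce the statement to the complete-graph formula already established in \cref{thm:warmup-tj}. The Johnson duality (complementation $A\mapsto[n]\setminus A$) gives $J(n,n-1)\cong J(n,1)$. It is also immediate that $J(n,1)\cong K_n$, since any two distinct singletons of $[n]$ intersect in the empty set, which has size $0=1-1$. Since $\KTJ(\cdot)$ depends only on the isomorphism type of the target graph, both sets in the statement equal $\KTJ(K_n)$.

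Next I will invoke part (1) of \cref{thm:warmup-tj}, which states that $\KTJ(K_n)=\{k:k\ge 1\}$ for every $n\ge 1$. This applies in particular to all $n\ge 2$. For completeness, I would recall the two sources of the witnesses:
\begin{enumerate}[(i)]
\item for $k=1$, \cref{prop:TJ1} gives $\TJ{1}{K_n}\cong K_n$;
\item for $k\ge 2$, take the complete $k$-partite graph $H$ with $k-1$ singleton parts and one part of size $n$. Its $k$-cliques are exactly the $n$ sets obtained by adding one vertex of the large part to the $k-1$ singletons, and any two of them meet in $k-1$ vertices, so $\TJ{k}{H}\cong K_n$.
\end{enumerate}
Note that for $n=2$ this gives $K_2=P_2$, which is consistent with the $P_2$ entry of \cref{thm:warmup-tj}.

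There is no real obstacle here. The only point to check is that the TJ side, unlike the TS side in \cref{cor:camera-johnson-complete-levels-ts}, has no exceptional case at $n=2$, and the uniform witness in (ii) already covers it. So the corollary follows by chaining these isomorphisms.
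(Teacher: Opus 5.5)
Your proposal is correct and follows essentially the same route as the paper: you use $J(n,1)\cong K_n\cong J(n,n-1)$ and then invoke the complete-graph formula $\KTJ(K_n)=\{k:k\ge 1\}$ from \cref{thm:warmup-tj}. The witnesses you recall for $k=1$ and $k\ge 2$ are exactly the ones behind that theorem.
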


\begin{proof}
Since
\[
J(n,1)\cong K_n\cong J(n,n-1),
\]
the claim is exactly the complete-graph formula from \cref{thm:warmup-tj}.
\end{proof}

We next settle the rank-$2$ Johnson family on the TJ side.
The maximum-clique statement is the rank-$2$ case of the standard star/top classification of Johnson cliques~\cite[Thm.~3.4 and Cor.~3.5]{ShuldinerO22}.
\begin{lemma}\label{lem:camera-Jn2-max-cliques-TJ}
Let $n\ge 5$. In the triangular graph $J(n,2)$, the maximum cliques are exactly the $n$ Johnson star cliques
\[
\mathcal M_i:=\bigl\{\{i,j\}: j\in [n]\setminus\{i\}\bigr\}
\qquad (i\in [n]).
\]
Each $\mathcal M_i$ has size $n-1$. Every vertex $\{i,j\}$ of $J(n,2)$ belongs to exactly the two maximum cliques $\mathcal M_i$ and $\mathcal M_j$, and for $i\ne j$ we have
\[
\mathcal M_i\cap \mathcal M_j=\bigl\{\{i,j\}\bigr\}.
\]
\end{lemma}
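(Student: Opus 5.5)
The plan is to apply the star/top classification of cliques, in the form already proved for reconfiguration graphs as \cref{thm:camera-TJ-clique-structure}, to the case $H=K_n$ and $k=2$. This works because $J(n,2)\cong\TJ{2}{K_n}$. In that setting every clique of $J(n,2)$ with at least $3$ vertices is one of two kinds. The first kind is a subfamily of a Johnson star $\mathcal M_i$, which consists of the $2$-sets sharing a common element $i$. The second kind is a subfamily of a Johnson top, which consists of the $2$-subsets of a $3$-set $U$.

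\textbf{Sizes.} A Johnson top has exactly $3$ vertices, while each $\mathcal M_i$ has $n-1\ge 4$ vertices when $n\ge 5$. Cliques with at most $2$ vertices are too small to matter. Hence $\omega(J(n,2))=n-1$.

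\textbf{Maximum cliques.} Take a clique of size $n-1\ge 4$. It cannot be of top type, so it is of star type, and it sits inside some $\mathcal M_i$. Since $|\mathcal M_i|=n-1$, the clique equals $\mathcal M_i$. Conversely, every $\mathcal M_i$ is a clique of size $n-1$. The cliques $\mathcal M_i$ are pairwise distinct, because the common element $i$ is recovered as the intersection of all members of $\mathcal M_i$. So there are exactly $n$ maximum cliques.

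\textbf{Incidences.} The vertex $\{i,j\}$ belongs to $\mathcal M_\ell$ exactly when $\ell\in\{i,j\}$. Therefore it lies in precisely $\mathcal M_i$ and $\mathcal M_j$. For $i\ne j$, a $2$-set containing both $i$ and $j$ must equal $\{i,j\}$, so $\mathcal M_i\cap\mathcal M_j=\{\{i,j\}\}$.

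The only delicate point is the threshold $n\ge 5$. For $n=4$ the stars and the tops both have size $3$, so the tops would also be maximum cliques. The strict inequality $n-1>3$ is exactly what excludes them, and I will state this explicitly in the proof.
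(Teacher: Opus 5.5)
Your argument is correct and is essentially the paper's route. The paper gives no separate proof: it presents the lemma as the rank-$2$ case of the cited standard star/top classification of Johnson cliques. You obtain that same classification internally by specializing \cref{thm:camera-TJ-clique-structure} to $H=K_n$, $k=2$ via $\TJ{2}{K_n}=J(n,2)$. This makes the lemma self-contained with no circularity, since that theorem is proved earlier without using the lemma, and your comparison $n-1>3$ correctly isolates where the hypothesis $n\ge 5$ is needed.
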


\begin{lemma}\label{lem:camera-TJ-star-family-common-core}
Let $H$ be a graph, let $k\ge 2$, and let $S_1,\dots,S_m$ be distinct $(k-1)$-cliques of $H$ such that for all distinct $i,j$ the union $S_i\cup S_j$ is a $k$-clique of $H$, and the $k$-clique $S_i\cup S_j$ contains no $S_\ell$ with $\ell\notin\{i,j\}$. Then there exist a $(k-2)$-clique $T$ of $H$ and pairwise distinct vertices $x_1,\dots,x_m\in V(H)\setminus T$ such that
\[
S_i=T+x_i\qquad (1\le i\le m).
\]
\end{lemma}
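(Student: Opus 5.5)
The plan is to reduce the statement to the clique-structure theorem for token jumping, applied one level down, at level $k-1$.

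First I would record the basic intersection fact. For distinct $i,j$, the set $S_i\cup S_j$ is a $k$-clique and $|S_i|=|S_j|=k-1$. Hence
\[
|S_i\cap S_j|=(k-1)+(k-1)-k=k-2.
\]
I would then dispose of the small cases directly.
\begin{itemize}
\item If $m=1$, choose any $(k-2)$-subset $T\subseteq S_1$; this is possible since $k\ge 2$. Let $x_1$ be the unique vertex of $S_1\setminus T$. Then $T$ is a clique as a subset of the clique $S_1$.
\item If $m=2$, put $T:=S_1\cap S_2$, which has size $k-2$ and is a clique. Write $S_i=T+x_i$; we have $x_1\neq x_2$ because $S_1\neq S_2$.
\end{itemize}

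For $m\ge 3$, I would view $S_1,\dots,S_m$ as vertices of $\TJ{k-1}{H}$; here $k-1\ge 1$, so this graph is defined. By the intersection computation, any two of them meet in exactly $(k-1)-1$ vertices, so they form a clique of size $m\ge 3$ in $\TJ{k-1}{H}$. Applying \cref{thm:camera-TJ-clique-structure} with $k-1$ in place of $k$ gives exactly one of two cases.
\begin{itemize}
\item \emph{Top type.} There is a $k$-clique $U$ of $H$ with $S_i=U-u_i$ for pairwise distinct $u_i\in U$. Then $S_1\cup S_2=U$, and this $k$-clique contains $S_3$. That contradicts the hypothesis that $S_1\cup S_2$ contains no $S_\ell$ with $\ell\notin\{1,2\}$, so this case is excluded.
\item \emph{Star type.} There is a $(k-2)$-clique $T$ of $H$ and pairwise distinct $x_1,\dots,x_m\notin T$ with $S_i=T+x_i$. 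This is exactly the desired conclusion.
\end{itemize}

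The only point needing care is the degenerate level $k=2$, where the theorem is applied to $\TJ{1}{H}$ and $T=\emptyset$. This is covered because \cref{thm:camera-TJ-clique-structure} is stated for all $k\ge 1$, and the empty set counts as the $0$-clique by the paper's convention. In that case the top-type alternative says that all singletons $S_i$ lie in a single edge $U$, which is impossible for $m\ge 3$ distinct singletons anyway. I expect no genuine obstacle beyond checking this index shift.
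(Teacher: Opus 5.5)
Your proof is correct, but it follows a different route from the paper's.

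\textbf{The paper's route.} The paper argues directly. It fixes $S_1,S_2$, puts $T:=S_1\cap S_2$, and writes $S_1=T+a$, $S_2=T+b$. Each further $S_i$ meets both $S_1$ and $S_2$ in $k-2$ vertices. So either $S_i=T+c$, or $S_i=(T-x)+a+b$, and the second option is excluded because then $S_i\subseteq S_1\cup S_2$. This amounts to the triangle case of \cref{thm:camera-TJ-clique-structure} at level $k-1$, redone by hand for each triangle $\{S_1,S_2,S_i\}$. The hypothesis kills the top alternative triangle by triangle, so no induction over the size of the family is needed.

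\textbf{Your route.} You observe that the hypothesis says exactly that $S_1,\dots,S_m$ form a clique of $\TJ{k-1}{H}$. You then invoke the full structure theorem once. Since that theorem already forbids mixing the two types across the whole family, you need only the single instance $S_3\not\subseteq S_1\cup S_2$ of the hypothesis. This makes the lemma a genuine corollary: a clique of $\TJ{k-1}{H}$ that is not of top type is a star.

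\textbf{Trade-off.} You must treat $m=2$ separately. You must also confirm that the theorem holds at the degenerate level $k-1=1$, which it does, as you note. The paper's argument is instead self-contained and handles all $m\ge 2$ in one pass.
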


\begin{proof}
If $m=1$, then choose any vertex $x_1\in S_1$ and set
\[
T:=S_1-x_1.
\]
Then $T$ is a $(k-2)$-clique of $H$ and $S_1=T+x_1$.
So assume from now on that $m\ge 2$.
Fix two distinct indices, say $1$ and $2$, and write
\[
S_1=T+a,\qquad S_2=T+b,
\]
where $T=S_1\cap S_2$ has size $k-2$ and $a\ne b$.
Let $i\ge 3$.
Because $S_i\cup S_1$ and $S_i\cup S_2$ are both $k$-cliques, we have
\[
|S_i\cap S_1|=|S_i\cap S_2|=k-2.
\]
Since $S_1=T+a$ and $S_2=T+b$, the set $S_i$ can differ from each of them by exactly one vertex.
Therefore, either
\[
S_i=T+c
\]
for some vertex $c\notin T\cup\{a,b\}$, or else
\[
S_i=(T-x)+a+b
\]
for some $x\in T$.
In the second case we would have $S_i\subseteq S_1\cup S_2$, contradicting the assumption that the $k$-clique $S_1\cup S_2$ contains no $S_\ell$ with $\ell\notin\{1,2\}$.
Hence, every $S_i$ has the form $T+c$.
Writing $x_i$ for its unique vertex outside $T$, we obtain $S_i=T+x_i$ for all $i$.
The vertices $x_i$ are pairwise distinct because the sets $S_i$ are distinct.
Since $T\subseteq S_1$, it is a clique of $H$.
\end{proof}

\begin{theorem}\label{thm:camera-KTJ-Jn2}
For every $n\ge 5$,
\[
\KTJ(J(n,2))=\{2,n-2\}.
\]
\end{theorem}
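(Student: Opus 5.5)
The inclusion $\{2,n-2\}\subseteq\KTJ(J(n,2))$ comes from the complete-graph witness: $\TJ{2}{K_n}\cong J(n,2)$, and by Johnson duality $\TJ{n-2}{K_n}\cong J(n,n-2)\cong J(n,2)$. For the reverse inclusion, suppose $J(n,2)\cong\TJ{k}{H}$. Since $J(n,2)$ is not complete, \cref{prop:TJ1} excludes $k=1$. I would then split the remaining range into $3\le k\le n-3$ and $k\ge n-1$.

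\emph{Case $3\le k\le n-3$.} By \cref{lem:camera-Jn2-max-cliques-TJ}, every maximum clique $\mathcal M_i$ has size $n-1\ge k+2>k+1$. It is a maximal clique, so by \cref{cor:camera-TJ-maximal-cliques} it is not a top clique and is therefore a star clique $\mathcal C_H(S_i)$ for some $(k-1)$-clique $S_i$. The $S_i$ are distinct, since the $\mathcal M_i$ are distinct. The vertex $\{i,j\}$ lies in both $\mathcal M_i$ and $\mathcal M_j$, so the corresponding $k$-clique contains $S_i\cup S_j$, and hence $S_i\cup S_j$ equals that $k$-clique. If this $k$-clique contained some $S_\ell$ with $\ell\notin\{i,j\}$, then $\{i,j\}$ would lie in $\mathcal M_\ell$, contradicting $\mathcal M_i\cap\mathcal M_j\cap\mathcal M_\ell=\varnothing$. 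So \cref{lem:camera-TJ-star-family-common-core} applies and gives $S_i=T+x_i$ with $|T|=k-2$. Every vertex of $J(n,2)$ is then $T+x_i+x_j$; there are exactly $\binom n2$ of these, and they are all distinct.

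I would now use a vertex not of this form to reach a contradiction. Any two of the $k$-cliques $T+x_i+x_j$ meeting in $k-1$ points share an $x$, which is consistent. So the contradiction must come from elsewhere: the $k$-cliques $T-t+x_i+x_j+x_l$. These are $k$-cliques of $H$ provided $x_ix_jx_l$ is a triangle, and it is, because each pair $x_ix_j$ lies in a clique. Hence $H$ has extra $k$-cliques, and for $k\ge3$ we have $T\neq\varnothing$. The set $T-t+x_1+x_2+x_3$ is therefore a vertex of $\TJ{k}{H}$ different from every $T+x_i+x_j$. This makes the vertex count exceed $\binom n2$, a contradiction. So in fact this case only needs $k\ge3$ together with the star structure.

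\emph{Case $k\ge n-1$, $k\ne n-2$.} Here the maximum cliques of size $n-1\le k$ can be neither top cliques, which have size $k+1\ge n$, nor anything else that is maximal, so each $\mathcal M_i$ is again a star clique. The same argument as above applies verbatim, giving $T\ne\varnothing$ (since $k\ge 4$) and the same extra $k$-clique. So the star-clique reconstruction works for all $k\ge3$ with $k\neq n-2$.

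\emph{Case $k=n-2$.} This case is consistent, since it is the witness itself. The point is that when $n-1=k+1$ the $\mathcal M_i$ may be top cliques, so the argument above does not apply and this value is not excluded. The main obstacle is verifying the maximal-clique reasoning: that every $\mathcal M_i$ being a maximum clique of size $n-1\neq k+1$ forces the star type.
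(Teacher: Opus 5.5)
Your proposal is correct and follows the paper's proof essentially step for step. You force each maximum star $\mathcal M_i$ to be a star clique $\mathcal C_H(S_i)$ via \cref{cor:camera-TJ-maximal-cliques} and $k+1\neq n-1$, you derive the no-third-core condition from $\mathcal M_i\cap\mathcal M_j=\{\{i,j\}\}$, and you apply \cref{lem:camera-TJ-star-family-common-core} to get $S_i=T+x_i$ with $T\neq\varnothing$, which produces too many $k$-cliques. The only differences are cosmetic. You exhibit the single extra $k$-clique $T-t+x_1+x_2+x_3$, whereas the paper notes that $T\cup\{x_1,\dots,x_n\}$ is a clique of size $n+k-2$ and compares $\binom{n+k-2}{k}$ with $\binom{n}{2}$. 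Your two cases $3\le k\le n-3$ and $k\ge n-1$ merge into the single condition $k\ge 3$, $k\ne n-2$.
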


\begin{proof}
Because
\[
\TJ{2}{K_n}\cong J(n,2)
\quad\text{and}\quad
\TJ{n-2}{K_n}\cong J(n,n-2)\cong J(n,2),
\]
the values $2$ and $n-2$ belong to $\KTJ(J(n,2))$.
For the converse, let $k\in \KTJ(J(n,2))$ and choose a graph $H$ with
\[
\TJ{k}{H}\cong J(n,2).
\]
Fix an isomorphism $\phi:J(n,2)\to\TJ{k}{H}$.
Because $J(n,2)$ is not complete when $n\ge 5$, \cref{prop:TJ1} gives $k\ne 1$.
If $k\in\{2,n-2\}$, we are done, so assume from now on that
\[
k\ge 3
\qquad\text{and}\qquad
k\ne n-2.
\]
By \cref{lem:camera-Jn2-max-cliques-TJ}, the graph $J(n,2)$ has exactly $n$ maximum cliques, each of size $n-1$, and every vertex lies in exactly two of them.
Because isomorphisms preserve maximal cliques, the $\phi$-image of each maximum clique of $J(n,2)$ is a maximal clique of $\TJ{k}{H}$. By \cref{cor:camera-TJ-maximal-cliques}, each such image is therefore either a star clique or a top clique. A top clique in $\TJ{k}{H}$ has size exactly $k+1$, and because $k+1\ne n-1$, no maximum clique of $J(n,2)$ can be realized as a top clique in the witness. Hence, every Johnson maximum clique is realized as a star clique of $\TJ{k}{H}$.
Let $S_1,\dots,S_n$ be the $(k-1)$-cliques of $H$ corresponding to the $n$ maximum cliques of $J(n,2)$ from \cref{lem:camera-Jn2-max-cliques-TJ}.
The cliques $S_1,\dots,S_n$ are pairwise distinct, because their star cliques are the distinct $\phi$-images of the Johnson maximum cliques.
For distinct $i,j$, the cliques $\mathcal C_H(S_i)$ and $\mathcal C_H(S_j)$ intersect in exactly one vertex of $\TJ{k}{H}$, because the corresponding maximum cliques of $J(n,2)$ do.
Any vertex in this intersection is a $k$-clique containing both $S_i$ and $S_j$.
Since $S_i$ and $S_j$ are distinct $(k-1)$-subcliques of that same $k$-clique, their union has size $k$, so the vertex necessarily equals $S_i\cup S_j$.
Thus, $S_i\cup S_j$ is a $k$-clique of $H$ for every $i\ne j$.
Moreover, each vertex of $J(n,2)$ belongs to exactly two maximum cliques by \cref{lem:camera-Jn2-max-cliques-TJ}, so for every $i\ne j$ the $k$-clique $S_i\cup S_j$ contains no $S_\ell$ with $\ell\notin\{i,j\}$.
Hence, \cref{lem:camera-TJ-star-family-common-core} applies.
We obtain a common $(k-2)$-clique $T$ and pairwise distinct vertices $x_1,\dots,x_n$ such that
\[
S_i=T+x_i\qquad (1\le i\le n).
\]
Then, for every $i\ne j$,
\[
S_i\cup S_j=T+x_i+x_j
\]
is a $k$-clique of $H$.
In particular, every pair of vertices among $T\cup\{x_1,\dots,x_n\}$ is adjacent in $H$, so this set induces a clique of size
\[
|T|+n=(k-2)+n=n+k-2.
\]
Consequently,
\[
|V(J(n,2))|=|V(\TJ{k}{H})|\ge \binom{n+k-2}{k}.
\]
But since $k\ge 3$ and $n\ge 5$,
\[
\binom{n+k-2}{k}=\binom{n+k-2}{n-2}\ge \binom{n+1}{n-2}=\binom{n+1}{3}>\binom{n}{2}=|V(J(n,2))|,
\]
a contradiction.
Therefore, $k\in\{2,n-2\}$.
Therefore, $\KTJ(J(n,2))=\{2,n-2\}$.
\end{proof}

The rank-$2$ incidence-transfer step used in the proof of \cref{thm:camera-KTJ-Jn2} is shown in \cref{fig:jn2-tj-star-core}.
\begin{figure}[ht]
\centering
\begin{adjustbox}{max width=\textwidth}
\begin{tikzpicture}[x=1cm,y=1cm]
\providecommand{\cref}[1]{\ref{#1}}
\tikzset{
  paneltitle/.style={font=\bfseries\small, align=center,
    minimum width=4.60cm, minimum height=.42cm, text width=4.45cm},
  cliquenode/.style={rectangle, draw=black, fill=white, line width=.6pt,
    font=\footnotesize, align=center, inner sep=2pt},
  edgelabel/.style={fill=white, inner sep=1.5pt, font=\scriptsize, align=center},
  note/.style={font=\scriptsize, align=center, minimum width=2.75cm,
    minimum height=.38cm, text width=2.60cm},
  formula/.style={rectangle, draw=black, fill=white, line width=.6pt,
    font=\scriptsize, align=center, inner sep=3pt},
  core/.style={rectangle, draw=black, fill=white, line width=.7pt,
    font=\small, align=center, inner sep=2pt},
  xnode/.style={circle, draw=black, fill=white, line width=.55pt,
    font=\scriptsize, align=center, inner sep=1.5pt},
  arrow/.style={->, draw=black, line width=.6pt},
  edge/.style={draw=black, line width=.55pt},
  faintedge/.style={draw=black, densely dotted, line width=.45pt},
  separator/.style={densely dashed, draw=black, line width=.45pt}
}
\path[use as bounding box] (-.15,-.80) rectangle (15.90,9.08);

\draw[separator] (6.55,4.58) -- (6.55,8.34);
\draw[separator] (.25,4.00) -- (15.55,4.00);

\node[paneltitle] at (3.28,8.46) {Johnson incidence\\in $J(n,2)$};
\node[cliquenode, minimum width=1.48cm, minimum height=.48cm, text width=1.18cm]
  (Mi) at (1.65,7.35) {$\mathcal M_i$};
\node[cliquenode, minimum width=1.48cm, minimum height=.48cm, text width=1.18cm]
  (Mj) at (4.82,7.35) {$\mathcal M_j$};
\node[cliquenode, minimum width=1.48cm, minimum height=.48cm, text width=1.18cm]
  (Ml) at (3.24,5.82) {$\mathcal M_\ell$};
\node[cliquenode, minimum width=1.48cm, minimum height=.48cm, text width=1.18cm]
  (Mo) at (5.72,5.82) {$\cdots$};

\draw[edge] (Mi) -- (Mj);
\draw[edge] (Mi) -- (Ml);
\draw[edge] (Mj) -- (Ml);
\draw[faintedge] (Mi) -- (Mo);
\draw[faintedge] (Mj) -- (Mo);
\draw[faintedge] (Ml) -- (Mo);

\node[edgelabel] at (3.24,7.63) {$\{i,j\}$};
\node[edgelabel] at (1.75,6.48) {$\{i,\ell\}$};
\node[edgelabel] at (4.73,6.48) {$\{j,\ell\}$};
\node[note] at (3.25,4.88) {edge labels are unique intersections};

\node[paneltitle, minimum width=6.20cm, text width=6.00cm]
  at (11.20,8.46) {same incidence in $\mathsf{TJ}_k(H)$};
\node[cliquenode, minimum width=2.18cm, minimum height=.50cm, text width=1.92cm]
  (Ci) at (8.28,7.35) {$\mathcal C_H(S_i)$};
\node[cliquenode, minimum width=2.18cm, minimum height=.50cm, text width=1.92cm]
  (Cj) at (12.84,7.35) {$\mathcal C_H(S_j)$};
\node[cliquenode, minimum width=2.18cm, minimum height=.50cm, text width=1.92cm]
  (Cl) at (10.56,5.82) {$\mathcal C_H(S_\ell)$};
\node[cliquenode, minimum width=2.18cm, minimum height=.50cm, text width=1.92cm]
  (Co) at (14.70,5.82) {$\cdots$};

\draw[edge] (Ci) -- (Cj);
\draw[edge] (Ci) -- (Cl);
\draw[edge] (Cj) -- (Cl);
\draw[faintedge] (Ci) -- (Co);
\draw[faintedge] (Cj) -- (Co);
\draw[faintedge] (Cl) -- (Co);

\node[edgelabel] at (10.56,7.72) {$Q_{ij}$};
\node[edgelabel] at (8.86,6.45) {$Q_{i\ell}$};
\node[edgelabel] at (12.25,6.45) {$Q_{j\ell}$};

\node[formula, minimum width=5.80cm, minimum height=.52cm, text width=5.45cm]
  (Qdef) at (10.55,5.12)
  {$Q_{ab}:=\phi(\{a,b\})=S_a\cup S_b$};
\node[formula, minimum width=4.05cm, minimum height=.50cm, text width=3.72cm]
  (nothird) at (13.26,4.47)
  {$S_c\not\subseteq Q_{ab}$ if $c\notin\{a,b\}$};

\draw[arrow] (5.95,7.35) -- (7.05,7.35)
  node[midway, above, edgelabel] {$\phi$};
\node[note, minimum width=3.05cm, text width=2.90cm]
  at (6.50,6.72) {max cliques map to TJ stars};

\node[paneltitle, minimum width=3.20cm, text width=3.05cm]
  at (2.00,3.26) {inside $H$};

\node[formula, minimum width=3.40cm, minimum height=.74cm, text width=3.10cm]
  (lemma) at (2.78,2.50)
  {\cref{lem:camera-TJ-star-family-common-core}:\\$S_a=T+x_a$ for all $a$};
\node[formula, minimum width=2.92cm, minimum height=.58cm, text width=2.62cm]
  (qab) at (6.34,2.50)
  {$Q_{ab}=T+x_a+x_b$};
\draw[arrow] (lemma.east) -- (qab.west);

\draw[edge] (9.15,.10) rectangle (15.38,3.07);
\node[edgelabel] at (12.27,3.28) {clique in $H$};

\node[core, minimum width=1.52cm, minimum height=.64cm, text width=1.20cm]
  (Tcore) at (10.18,1.54) {$T$};
\node[note, minimum width=1.80cm, text width=1.65cm]
  at (10.18,.82) {common core};

\node[xnode, minimum width=.70cm, minimum height=.70cm, text width=.52cm]
  (xi) at (12.02,2.40) {$x_i$};
\node[xnode, minimum width=.70cm, minimum height=.70cm, text width=.52cm]
  (xj) at (13.28,2.07) {$x_j$};
\node[xnode, minimum width=.70cm, minimum height=.70cm, text width=.52cm]
  (xl) at (13.28,1.00) {$x_\ell$};
\node[xnode, minimum width=.70cm, minimum height=.70cm, text width=.52cm]
  (xn) at (12.02,.67) {$x_n$};
\node[edgelabel] at (14.28,1.54) {$\cdots$};

\draw[edge] (xi.east) -- (xj.west);
\draw[edge] (xj.south) -- (xl.north);
\draw[edge] (xl.west) -- (xn.east);
\draw[edge] (xn.north) -- (xi.south);
\draw[edge] (xi.south east) -- (xl.north west);
\draw[edge] (xj.south west) -- (xn.north east);
\draw[edge] (Tcore.east) -- (xi.west);
\draw[edge] (Tcore.east) -- (xj.west);
\draw[edge] (Tcore.east) -- (xl.west);
\draw[edge] (Tcore.east) -- (xn.west);

\draw[arrow] (qab.east) -- (9.10,2.34);
\node[note, minimum width=5.08cm, minimum height=.42cm, text width=4.78cm]
  at (12.32,-.30)
  {$T\cup\{x_1,\ldots,x_n\}$ has size $n+k-2$};
\end{tikzpicture}
\end{adjustbox}
\caption{The rank-$2$ Johnson/TJ obstruction.  The top two panels are incidence schematics of clique families: Johnson maximum cliques $\mathcal M_a$ correspond under the assumed isomorphism $\phi$ to TJ star cliques $\mathcal C_H(S_a)$.  Each label $Q_{ab}:=\phi(\{a,b\})=S_a\cup S_b$ is one vertex of $\mathsf{TJ}_k(H)$, equivalently one $k$-clique of the underlying graph $H$; the no-third condition records that $S_c\nsubseteq Q_{ab}$ when $c\notin\{a,b\}$.  \cref{lem:camera-TJ-star-family-common-core} then gives $S_a=T+x_a$ for all $a$, so $T\cup\{x_1,\ldots,x_n\}$ is a clique of size $n+k-2$ in $H$, producing, in the proof setting $n\ge 5$ and $k\ge 3$, at least $\displaystyle \binom{n+k-2}{k}>\binom n2$ TJ vertices.  The arrows denote proof dependence, not graph edges: the $\phi$ arrow is the assumed isomorphism, the next arrow substitutes the lemma output into $Q_{ab}=S_a\cup S_b$, and the final arrow uses all pairs $a,b$ to form the clique in $H$.}
\label{fig:jn2-tj-star-core}
\end{figure}
\FloatBarrier

We now turn to the stable higher-rank regime.
The next corollary is the simplest instance of the template above: stable Johnson maximum cliques are larger than TJ top cliques, so the star-count bound gives an immediate contradiction.
\begin{corollary}\label{cor:camera-JnR-no-small-TJ}
Assume $n>2r\ge 4$.
Then
\[
\{1,\dots,r-1\}\cap \KTJ(J(n,r))=\varnothing.
\]
\end{corollary}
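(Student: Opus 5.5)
Let $k\in\{1,\dots,r-1\}$ and suppose $\TJ{k}{H}\cong J(n,r)$ via an isomorphism $\phi$. The plan is to show that every maximum clique through a fixed vertex must become a star clique, and then count: a vertex of $\TJ{k}{H}$ lies in at most $k$ star cliques, while a vertex of $J(n,r)$ lies in $r>k$ maximum cliques.

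\textbf{Step 1: exclude $k=1$.} Since $n>2r\ge 4$ gives $r\ge 2$ and $n-r\ge 3$, the Johnson graph $J(n,r)$ contains two disjoint $r$-sets. These are nonadjacent, so $J(n,r)$ is not complete. By \cref{prop:TJ1}, $k=1$ is impossible, and we may assume $2\le k\le r-1$.

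\textbf{Step 2: maximum cliques become star cliques.} By \cref{lem:camera-JnR-max-stars-stable}, every maximum clique of $J(n,r)$ is a Johnson star of size $n-r+1$. Fix a vertex $A$; it lies in exactly $r$ distinct maximum cliques, indexed by the $r$ subsets $A-a$. Isomorphisms preserve maximal cliques, so the $\phi$-image of each of these is a maximal clique of $\TJ{k}{H}$. By \cref{cor:camera-TJ-maximal-cliques}, each image is a star clique or a top clique, and top cliques have size exactly $k+1$. Since
\[
n-r+1>r+1>k+1,
\]
no image can be a top clique. Hence all $r$ images are star clique s, and they are pairwise distinct because $\phi$ is injective on vertex sets.

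\textbf{Step 3: count.} The vertex $\phi(A)$ now lies in at least $r$ distinct star cliques. But \cref{lem:camera-TJ-clique-at-most-k-stars} allows at most $k\le r-1$ of them, a contradiction. Therefore no $k\in\{1,\dots,r-1\}$ lies in $\KTJ(J(n,r))$.

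The only point needing care is Step 2. The family $\mathcal C_H(S)$ depends only on the vertex set, so distinct images give distinct star cliques and the count in Step 3 is legitimate. There is no real obstacle beyond this.
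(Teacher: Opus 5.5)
Your proof is correct and follows the paper's argument essentially step for step. You exclude $k=1$ via \cref{prop:TJ1}, then force every Johnson maximum star through a vertex to be realized as a star clique, because its size $n-r+1$ exceeds the top-clique size $k+1$. The count $r>k$ then contradicts \cref{lem:camera-TJ-clique-at-most-k-stars}, and you also spell out the non-completeness of $J(n,r)$ and the distinctness of the image star cliques, which the paper leaves implicit.
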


\begin{proof}
Suppose to the contrary that $J(n,r)\cong \TJ{k}{H}$ for some $1\le k\le r-1$.
Fix an isomorphism $\phi:J(n,r)\to\TJ{k}{H}$.
Because $J(n,r)$ is not complete when $n>2r\ge 4$, \cref{prop:TJ1} gives $k\ne 1$.
Hence, $2\le k\le r-1$.
If $r=2$, this is already impossible; hence assume $r\ge 3$.
By \cref{lem:camera-JnR-max-stars-stable}, every maximum clique of $J(n,r)$ is a Johnson star of size $n-r+1$, and every vertex lies in exactly $r$ such maximum cliques. Because isomorphisms preserve maximal cliques, the $\phi$-image of each maximum clique of $J(n,r)$ is a maximal clique of $\TJ{k}{H}$. By \cref{cor:camera-TJ-maximal-cliques}, each such image is therefore either a star clique or a top clique. A top clique in $\TJ{k}{H}$ has size exactly $k+1$, and since $k+1\le r<n-r+1$, no maximum clique of $J(n,r)$ can be a top clique in the witness. Hence, every maximum clique of $J(n,r)$ must be realized as a star clique in $\TJ{k}{H}$.
Fix a vertex $A$ of $J(n,r)$.
The vertex $\phi(A)$ of $\TJ{k}{H}$ therefore lies in at least $r$ distinct star cliques.
But this contradicts \cref{lem:camera-TJ-clique-at-most-k-stars}, because $r>k$.
Therefore, no such $k$ exists.
\end{proof}

The next lemma gives the base case for the common-core normalization.
\begin{lemma}\label{lem:camera-TJ-pair-star-common-core}
Let $H$ be a graph, let $k\ge 3$, let $n\ge 4$, and let
\[
\{S_{ij}:1\le i<j\le n\}
\]
be distinct $(k-1)$-cliques of $H$ such that for every triple $1\le i<j<\ell\le n$ there exists a $k$-clique $Q_{ij\ell}$ of $H$ containing exactly the three cliques
\[
S_{ij},\ S_{i\ell},\ S_{j\ell}
\]
from this family.
Then there exist a $(k-3)$-clique $T$ of $H$ and pairwise distinct vertices $x_1,\dots,x_n\in V(H)\setminus T$ such that
\[
S_{ij}=T+x_i+x_j\qquad (1\le i<j\le n).
\]
\end{lemma}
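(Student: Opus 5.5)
The plan is to extract, for each index $i$, a $(k-2)$-set $C_i$ that is contained in every $S_{ij}$, and then to show that all the $C_i$ share one common $(k-3)$-core $T$.

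\textbf{Step 1: the local picture inside one triple.} Fix a triple $i<j<\ell$. The three distinct $(k-1)$-sets $S_{ij},S_{i\ell},S_{j\ell}$ lie in the $k$-set $Q_{ij\ell}$. So they have the form $Q_{ij\ell}-u,\ Q_{ij\ell}-v,\ Q_{ij\ell}-w$ for distinct $u,v,w$. Consequently any two of them meet in exactly $k-2$ vertices, and their triple intersection has exactly $k-3$ vertices. Also $S_{ij}\cup S_{i\ell}=Q_{ij\ell}$. In particular, any two members of the family whose index pairs share an index intersect in exactly $k-2$ vertices.

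\textbf{Step 2: a common $(k-2)$-set at each index.} Fix $i$ and three further distinct indices $j,\ell,m$; this is where $n\ge 4$ is used. The three $(k-1)$-sets $S_{ij},S_{i\ell},S_{im}$ pairwise meet in $k-2$ vertices. Repeating the counting from the base case of \cref{thm:camera-TJ-clique-structure} one level down, their common intersection has size $k-2$ or $k-3$. In the second case all three lie in the $k$-set $S_{ij}\cup S_{i\ell}$. By Step~1 this $k$-set equals $Q_{ij\ell}$, so $S_{im}\subseteq Q_{ij\ell}$, which contradicts the ``exactly three'' hypothesis. Hence $S_{ij}\cap S_{i\ell}=S_{ij}\cap S_{im}$ for all such $j,\ell,m$. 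Chaining these equalities over the (at least three) other indices shows that $C_i:=S_{ij}\cap S_{i\ell}$ does not depend on the choice of $j\ne\ell$. Thus $C_i$ is a $(k-2)$-set contained in every $S_{ij}$.

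\textbf{Step 3: distinctness of the $C_i$ and $S_{ij}=C_i\cup C_j$.} First, $C_i\ne C_j$ for $i\ne j$. Otherwise, for any third index $\ell$, the sets $S_{ij},S_{i\ell},S_{j\ell}$ would all contain the $(k-2)$-set $C_i$, contradicting the triple-intersection size $k-3$ from Step~1. So $|C_i\cup C_j|\ge k-1$. Since $C_i\cup C_j\subseteq S_{ij}$, we get $S_{ij}=C_i\cup C_j$ and $|C_i\cap C_j|=k-3$.

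\textbf{Step 4: a single common core.} Intersecting gives $S_{ij}\cap S_{i\ell}\cap S_{j\ell}=C_i\cap(C_j\cup C_\ell)$, which has size $k-3$ by Step~1. This set already contains $C_i\cap C_j$, which has size $k-3$. Hence $C_i\cap C_\ell\subseteq C_i\cap C_j$, and equality holds by size. So $C_i\cap C_j$ is independent of $j$; by symmetry it is also independent of $i$. Call this common $(k-3)$-set $T$. Then $C_i=T+x_i$ for a unique vertex $x_i$, the $x_i$ are pairwise distinct because the $C_i$ are, and $S_{ij}=T+x_i+x_j$. Finally, $T$ is a clique because it is contained in the clique $S_{12}$.

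The main obstacle is Step~2: ruling out the ``top-type'' configuration for the three sets $S_{ij},S_{i\ell},S_{im}$. This is the only place where both the exactness hypothesis on $Q_{ij\ell}$ and the assumption $n\ge4$ are needed. Once the $C_i$ exist, Steps~3 and~4 are short size counts.
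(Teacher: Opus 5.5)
Your proof is correct, and it follows a genuinely different route from the paper's.

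\textbf{The paper's route.} The paper anchors everything at $Q_{123}$ and reads off $T,x_1,x_2,x_3$ from the three vertices deleted from that $k$-set. For each $\ell\ge 4$ it writes $Q_{12\ell}=S_{12}+a$ and $Q_{13\ell}=S_{13}+b$. Exactness excludes $a=x_3$ and $b=x_2$, since either would force $Q_{12\ell}$ or $Q_{13\ell}$ to equal $Q_{123}$. The bound $|Q_{12\ell}\cap Q_{13\ell}|\ge k-1$ then forces $a=b=:x_\ell$. The same intersection trick gives $S_{2\ell}$ and $S_{3\ell}$. Finally, the members $S_{ij}$ with $4\le i<j$ are recovered as $Q_{1ij}\cap Q_{2ij}$.

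\textbf{Your route.} You never fix a base triple. Instead you define the index cores $C_i$ intrinsically, as the common $(k-2)$-set of the star $\{S_{ij}\}_{j\ne i}$. Their existence comes from the star/top dichotomy for three $(k-1)$-sets pairwise meeting in $k-2$ vertices, which is the base case of the TJ clique-structure theorem one level down. Exactness is used only once, to rule out the top alternative via $S_{ij}\cup S_{i\ell}=Q_{ij\ell}$. After that, $S_{ij}=C_i\cup C_j$ and the common core $T=C_i\cap C_j$ follow from size counts alone.

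\textbf{What each buys.} Your version is symmetric in the indices and uses the cliques $Q_{ij\ell}$ only through the facts collected in your Step~1. It avoids the paper's separate handling of new indices $\ell\ge 4$ and of pairs $4\le i<j$, and it shows exactly where $n\ge 4$ is needed (a third outside index in Step~2). The paper's anchored, coordinate-building argument is more explicit about how each $x_\ell$ arises. Its structure also matches the anchored induction used in the following general-$s$ common-core lemma.
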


\begin{proof}
We regard cliques as their vertex sets. First consider $Q_{123}$.
The three distinct $(k-1)$-subcliques $S_{12},S_{13},S_{23}$ of the
$k$-set $Q_{123}$ are obtained by deleting three distinct vertices.
Thus, there are a $(k-3)$-set $T$ and distinct vertices
$x_1,x_2,x_3\notin T$ such that
\[
S_{12}=T+x_1+x_2,\qquad
S_{13}=T+x_1+x_3,\qquad
S_{23}=T+x_2+x_3.
\]

Fix $\ell\ge 4$. Write
\[
Q_{12\ell}=S_{12}+a,\qquad Q_{13\ell}=S_{13}+b
\]
with $a\notin S_{12}$ and $b\notin S_{13}$. The exactness condition for
$Q_{12\ell}$ gives $a\ne x_3$, since otherwise $Q_{12\ell}=Q_{123}$ would
also contain the family members $S_{13}$ and $S_{23}$. Similarly
$b\ne x_2$. Since $S_{1\ell}$ is contained in both $Q_{12\ell}$ and
$Q_{13\ell}$, their intersection has size at least $k-1$. But
\[
Q_{12\ell}=T+x_1+x_2+a,\qquad
Q_{13\ell}=T+x_1+x_3+b.
\]
These two $k$-sets already share $T+x_1$, of size $k-2$. Because
$x_2\ne x_3$, $a\ne x_3$, and $b\ne x_2$, the only way their intersection
has one more vertex is that $a=b$. Set $x_\ell:=a=b$. Then
\[
Q_{12\ell}\cap Q_{13\ell}=T+x_1+x_\ell,
\]
and hence
\[
S_{1\ell}=T+x_1+x_\ell.
\]

Now write $Q_{23\ell}=S_{23}+c$ with $c\notin S_{23}$. The exactness
condition for $Q_{23\ell}$ gives $c\ne x_1$, since otherwise
$Q_{23\ell}=Q_{123}$ would also contain $S_{12}$ and $S_{13}$. Since
$S_{2\ell}$ is contained in both $Q_{12\ell}$ and $Q_{23\ell}$, the same
intersection argument applied to
\[
Q_{12\ell}=T+x_1+x_2+x_\ell,\qquad
Q_{23\ell}=T+x_2+x_3+c
\]
forces $c=x_\ell$. Therefore,
\[
S_{2\ell}=Q_{12\ell}\cap Q_{23\ell}=T+x_2+x_\ell.
\]
Finally,
\[
Q_{13\ell}\cap Q_{23\ell}=T+x_3+x_\ell,
\]
so the common $(k-1)$-subclique $S_{3\ell}$ is
\[
S_{3\ell}=T+x_3+x_\ell.
\]

At this point, for every $\ell\ge 4$ we have
$x_\ell\notin T\cup\{x_1,x_2,x_3\}$: indeed $x_\ell=a\notin S_{12}$
and the exactness argument above gives $x_\ell\ne x_3$. If
$4\le \ell<m\le n$ and $x_\ell=x_m$, then
\[
S_{1\ell}=T+x_1+x_\ell=T+x_1+x_m=S_{1m},
\]
contrary to the assumed distinctness of the family. Hence,
$x_1,\dots,x_n$ are pairwise distinct.

It remains to handle pairs $4\le i<j\le n$. The clique $Q_{1ij}$
contains both
\[
S_{1i}=T+x_1+x_i,\qquad S_{1j}=T+x_1+x_j,
\]
whose union is therefore the $k$-set $T+x_1+x_i+x_j$. Hence,
\[
Q_{1ij}=T+x_1+x_i+x_j.
\]
Similarly,
\[
Q_{2ij}=T+x_2+x_i+x_j.
\]
The set $S_{ij}$ is contained in both $Q_{1ij}$ and $Q_{2ij}$, so
\[
S_{ij}\subseteq Q_{1ij}\cap Q_{2ij}=T+x_i+x_j.
\]
Both sides have size $k-1$, and therefore
\[
S_{ij}=T+x_i+x_j.
\]

The displayed formula now holds for every pair $i<j$. Finally,
$T\subseteq S_{12}$, so $T$ is a $(k-3)$-clique of $H$.
\end{proof}

We now extend the common-core normalization to arbitrary $s$.
\begin{lemma}\label{lem:camera-TJ-general-s-star-common-core}
Let $s\ge 2$, let $H$ be a graph, let $k\ge s+1$, let $n\ge s+2$, and let
\[
\{S_X:X\in \tbinom{[n]}{s}\}
\]
be distinct $(k-1)$-cliques of $H$ such that for every $(s+1)$-subset $A\subseteq [n]$ there exists a $k$-clique $Q_A$ of $H$ containing exactly the cliques
\[
\{S_X:X\in \tbinom{A}{s}\}
\]
from this family.
Then there exist a $(k-s-1)$-clique $T$ of $H$ and pairwise distinct vertices $x_1,\dots,x_n\in V(H)\setminus T$ such that
\[
S_X=T+\sum_{i\in X}x_i\qquad (X\in \tbinom{[n]}{s}).
\]
\end{lemma}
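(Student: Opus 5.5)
The plan is induction on $s$. The base case $s=2$ is exactly \cref{lem:camera-TJ-pair-star-common-core}: with $s=2$ the hypotheses $k\ge 3$ and $n\ge 4$ coincide with that lemma's, and its conclusion $S_{ij}=T+x_i+x_j$ with $|T|=k-3$ is the claim. So assume $s\ge 3$ and that the statement holds for $s-1$ (for all admissible $k,n$).

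The idea is to \emph{localize at one index} and drop $s$ by one.

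\textbf{Step 1 (link family at index $n$).} Consider the link at the index $n$. For $Y\in\binom{[n-1]}{s-1}$, set
\[
S'_Y:=S_{Y+n}.
\]
For an $s$-subset $B\subseteq[n-1]$, set
\[
Q'_B:=Q_{B+n}.
\]
Then $Q'_B$ contains the $S'_Y$ with $Y\subseteq B$. Conversely, if $Q'_B\supseteq S'_Y=S_{Y+n}$, then $Y+n\subseteq B+n$ by exactness, so $Y\subseteq B$. Thus the link data satisfy the exactness hypothesis with $s-1$, the same $k$ (still $k\ge (s-1)+1$), and $n-1\ge (s-1)+2$. The $S'_Y$ are distinct because the $S_X$ are.

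\textbf{Step 2 (apply induction).} Induction gives a $(k-s)$-clique $T'$ and distinct $x_1,\dots,x_{n-1}$ with
\[
S_{Y+n}=T'+\sum_{i\in Y}x_i \qquad (Y\in\tbinom{[n-1]}{s-1}).
\]

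\textbf{Step 3 (extract $x_n$ from $T'$).} Now use an $(s+1)$-set $A$ containing $n$ to see that a common vertex $x_n\in T'$ sits in all these sets. Put $T:=T'-x_n$, which has size $k-s-1$. The expected formula is
\[
S_{Y+n}=T+x_n+\sum_{i\in Y}x_i .
\]
Concretely, take $A=B+n$ with $|B|=s$. The $k$-set $Q_A$ is the union of any two of its members $S_{Y+n}$ with $Y\subseteq B$, so
\[
Q_{B+n}=T'+\sum_{i\in B}x_i .
\]
The member $S_B$ (not containing $n$) is then a $(k-1)$-subset of $Q_{B+n}$. It must differ from each $S_{Y+n}$, and this is where I expect to make the main effort. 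Each $(k-1)$-subset of $Q_{B+n}$ omits one vertex. Omitting some $x_i$ with $i\in B$ gives exactly $S_{(B-i)+n}$. So by distinctness, $S_B=Q_{B+n}-t_B$ for some $t_B\in T'$.

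\textbf{Step 4 (show $t_B$ is constant; main obstacle).} I must show $t_B$ does not depend on $B$, and then set $x_n:=t_B$. Take two $s$-sets $B,B'\subseteq[n-1]$ with $|B\cap B'|=s-1$, and let $A=B\cup B'$, an $(s+1)$-subset of $[n-1]$. Then $Q_A$ contains both $S_B$ and $S_{B'}$, so
\[
|S_B\cap S_{B'}|\ge k-2 .
\]
Explicitly,
\[
S_B=(T'-t_B)+\sum_{i\in B}x_i,\qquad S_{B'}=(T'-t_{B'})+\sum_{i\in B'}x_i .
\]
If $t_B\ne t_{B'}$, their intersection is $(T'-\{t_B,t_{B'}\})+\sum_{i\in B\cap B'}x_i$, of size $(k-s-2)+(s-1)=k-3$. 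This is a contradiction. Here the needed $x_i\notin T'$ is given by the induction conclusion. The Johnson graph $J(n-1,s)$ is connected, so $t_B$ is constant, equal to some $x_n\in T'$; in particular $k-s\ge 1$, consistent with $k\ge s+1$. This forces $S_B=T+\sum_{i\in B}x_i$ for all $B\subseteq[n-1]$, and $S_{Y+n}=T+x_n+\sum_{i\in Y}x_i$, as desired.

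\textbf{Step 5 (finish).} The $x_1,\dots,x_n$ are pairwise distinct and lie outside $T$, since $x_n\in T'$ while $x_i\notin T'$ for $i<n$. Finally, $T$ is a clique, being a subset of $S_X$.

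The delicate point is Step 3: checking that $S_B$ cannot coincide with some $S_{(B-i)+n}$, which uses the distinctness hypothesis, and that exactness transfers correctly to the link.
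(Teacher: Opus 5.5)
Your proof is correct and is essentially the paper's argument: induct on $s$ by restricting to the sets containing one fixed index (you use $n$, the paper uses $1$), apply the induction hypothesis to that link family, show that each remaining member equals the corresponding $Q$ minus a vertex of the enlarged core, and use connectivity of the Johnson graph to show this missing vertex is the same for all such members. The only difference is that you skip the paper's preliminary normalization via $Q_{A_0}$ and the subsequent matching $x_a=y_a$, and that step is indeed unnecessary.
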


\begin{proof}
We argue by induction on $s$.
For $s=2$, this is exactly \cref{lem:camera-TJ-pair-star-common-core}.
Assume now that the statement holds for this value of $s$, and let
\[
\{S_X:X\in \tbinom{[n]}{s+1}\}
\]
be a family satisfying the hypotheses with $s+1$ in place of $s$.
Set
\[
A_0:=\{1,2,\dots,s+2\}.
\]
The $k$-clique $Q_{A_0}$ contains the $s+2$ distinct $(k-1)$-cliques
\[
\{S_{A_0\setminus\{a\}}:a\in A_0\}.
\]
Thus, there exist a $(k-s-2)$-clique $T$ and distinct vertices $x_1,\dots,x_{s+2}\notin T$ such that
\[
S_{A_0\setminus\{a\}}=T+\sum_{i\in A_0\setminus\{a\}}x_i\qquad (a\in A_0).
\]
For every $\displaystyle Y\in \binom{\{2,\dots,n\}}{s}$, define
\[
U_Y:=S_{\{1\}\cup Y}.
\]
This anchored family is distinct, since it is a subfamily of the original family.
Let $\displaystyle B\in \binom{\{2,\dots,n\}}{s+1}$.
By the full-family exactness hypothesis applied to $\{1\}\cup B$, the members of the original family contained in $Q_{\{1\}\cup B}$ are precisely
\[
\{S_B\}\cup \{S_{\{1\}\cup Y}:Y\in \tbinom{B}{s}\}
=
\{S_B\}\cup \{U_Y:Y\in \tbinom{B}{s}\}.
\]
Thus, after restricting to the anchored subfamily $\{U_Y\}$, the same $k$-clique $Q_{\{1\}\cup B}$ contains exactly the cliques
\[
\{U_Y:Y\in \tbinom{B}{s}\}
\]
from that anchored subfamily.
These are precisely the hypotheses of the induction statement for parameter $s$ on the ground set $\{2,\dots,n\}$, after relabelling this ground set as $[n-1]$; here $k\ge s+2\ge s+1$ and $n-1\ge s+2$.
Hence, the induction hypothesis applies to the family $\{U_Y\}$.
We obtain a $(k-s-1)$-clique $T'$ and pairwise distinct vertices $y_2,\dots,y_n\in V(H)\setminus T'$ such that
\[
U_Y=T'+\sum_{i\in Y}y_i\qquad (Y\in \tbinom{\{2,\dots,n\}}{s}).
\]
For each $a\in\{2,\dots,s+2\}$, let $Y_a:=A_0\setminus\{1,a\}$.
Then $U_{Y_a}=S_{A_0\setminus\{a\}}$.
Intersecting these $s+1$ cliques gives $T'$ on the one hand, and $T+x_1$ on the other.
Therefore,
\[
T'=T+x_1.
\]
Also
\[
Q_{A_0}=T+x_1+\sum_{i=2}^{s+2}x_i=T'+\sum_{i=2}^{s+2}x_i.
\]
For each $a\in\{2,\dots,s+2\}$, the clique $U_{Y_a}$ is a $(k-1)$-subclique of $Q_{A_0}$.
On the $x$-description we have
\[
U_{Y_a}=Q_{A_0}\setminus\{x_a\}\qquad (a=2,\dots,s+2),
\]
while the $y$-description gives
\[
U_{Y_a}=T'+\sum_{i\in\{2,\dots,s+2\}\setminus\{a\}}y_i\qquad (a=2,\dots,s+2).
\]
Hence,
\[
\bigcup_{a=2}^{s+2}U_{Y_a}=T'+\sum_{i=2}^{s+2}y_i.
\]
Since each $U_{Y_a}\subseteq Q_{A_0}$ and every vertex of $Q_{A_0}\setminus T'$ belongs to some $U_{Y_a}$, it follows that
\[
Q_{A_0}=T'+\sum_{i=2}^{s+2}y_i.
\]
Therefore, for each $a=2,\dots,s+2$, the same $(k-1)$-subclique $U_{Y_a}$ omits $x_a$ in the $x$-description and omits $y_a$ in the $y$-description.
Since, inside a fixed clique, each $(k-1)$-subclique is uniquely determined by the unique vertex it omits, we conclude
\[
x_a=y_a\qquad (a=2,\dots,s+2).
\]
For every $i\ge s+3$, define $x_i:=y_i$.
Then the anchored formula becomes
\[
S_{\{1\}\cup Y}=T+x_1+\sum_{i\in Y}x_i\qquad (Y\in \tbinom{\{2,\dots,n\}}{s}).
\]
Now let $\displaystyle X\in \binom{\{2,\dots,n\}}{s+1}$.
The $k$-clique $Q_{\{1\}\cup X}$ contains the $s+1$ distinct $(k-1)$-cliques
\[
\Bigl\{S_{\{1\}\cup (X\setminus\{a\})}:a\in X\Bigr\}.
\]
Each of them equals
\[
T+x_1+\sum_{i\in X\setminus\{a\}}x_i.
\]
The union of any two such cliques already equals
\[
T+x_1+\sum_{i\in X}x_i,
\]
which has size $k$.
Therefore,
\[
Q_{\{1\}\cup X}=T+x_1+\sum_{i\in X}x_i.
\]
Inside this $k$-clique, the displayed $s+1$ $(k-1)$-subcliques are exactly those omitting one of the vertices $x_a$ with $a\in X$.
Since $Q_{\{1\}\cup X}$ contains exactly the $s+2$ family members
\[
\{S_Y:Y\in \tbinom{\{1\}\cup X}{s+1}\},
\]
the remaining family member $S_X$ is a $(k-1)$-subclique of $Q_{\{1\}\cup X}$ distinct from the displayed $s+1$ cliques.
Hence, there exists a unique vertex $m_X\in T\cup\{x_1\}$ such that
\[
S_X=\Bigl(T+x_1+\sum_{i\in X}x_i\Bigr)-m_X.
\]

We claim that $m_X$ is independent of $X$.
Let $\displaystyle X,Y\in \binom{\{2,\dots,n\}}{s+1}$ with $|X\cap Y|=s$, and set
\[
A:=X\cup Y.
\]
Then $|A|=s+2$, so both $S_X$ and $S_Y$ are contained in the $k$-clique $Q_A$.
If $m_X\ne m_Y$, then
\[
S_X\cup S_Y=T+x_1+\sum_{i\in A}x_i,
\]
which has size
\[
|T|+1+|A|=(k-s-2)+1+(s+2)=k+1,
\]
contrary to $S_X\cup S_Y\subseteq Q_A$.
Therefore, $m_X=m_Y$ whenever $|X\cap Y|=s$.
Because $n\ge s+3$, the Johnson graph on $\displaystyle \binom{\{2,\dots,n\}}{s+1}$ is connected.
Hence, all $m_X$ are equal to one common vertex $m\in T\cup\{x_1\}$.

Set
\[
\widetilde T:=(T+x_1)-m,\qquad
\widetilde x_1:=m,\qquad
\widetilde x_i:=x_i\quad (i=2,\dots,n).
\]
The set $T+x_1$ is a clique because it equals $T'$.
Hence, $\widetilde T$ is a $(k-s-2)$-clique.
The vertices $\widetilde x_1,\dots,\widetilde x_n$ are pairwise distinct and lie outside $\widetilde T$.
Moreover,
\[
S_{\{1\}\cup Y}=\widetilde T+\widetilde x_1+\sum_{i\in Y}\widetilde x_i
\qquad
\bigl(Y\in \tbinom{\{2,\dots,n\}}{s}\bigr),
\]
and
\[
S_X=\widetilde T+\sum_{i\in X}\widetilde x_i
\qquad
\bigl(X\in \tbinom{\{2,\dots,n\}}{s+1}\bigr).
\]
Renaming $\widetilde T,\widetilde x_1,\dots,\widetilde x_n$ as $T,x_1,\dots,x_n$, the desired identity holds for all $(s+1)$-subsets.
This induction completes the proof.
\end{proof}

We can now exclude the stable middle range on the TJ side.
The proof begins in the same way, but instead of using the star-count bound immediately, it feeds the resulting star family into the common-core normalization.
\begin{theorem}\label{thm:camera-tj-no-middle}
Let $n>2r\ge 6$.
If
\[
r+1\le k\le n-r-1,
\]
then
\[
k\notin \KTJ(J(n,r)).
\]
\end{theorem}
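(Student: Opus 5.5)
Suppose to the contrary that $J(n,r)\cong\TJ{k}{H}$ with $r+1\le k\le n-r-1$, and fix an isomorphism $\phi$. The plan follows the template announced before \cref{lem:camera-TJ-clique-at-most-k-stars}. By \cref{lem:camera-JnR-max-stars-stable}, the maximum cliques of $J(n,r)$ are the Johnson stars $\mathcal M_X$ with $X\in\binom{[n]}{r-1}$, each of size $n-r+1$. Their $\phi$-images are maximal cliques of $\TJ{k}{H}$. Since $k+1\le n-r<n-r+1$, \cref{cor:camera-TJ-maximal-cliques} shows that none of them is a top clique. So each is a star clique $\mathcal C_H(S_X)$ for some $(k-1)$-clique $S_X$, and the $S_X$ are pairwise distinct. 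Put $s:=r-1\ge 2$.

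Next I transfer the incidence structure, as in the proof of \cref{thm:camera-KTJ-Jn2}. For an $r$-set $A\subseteq[n]$, the vertex $A$ lies in exactly the $r$ stars $\mathcal M_X$ with $X\in\binom{A}{s}$. Hence the $k$-clique $Q_A:=\phi(A)$ contains $S_X$ for every $X\in\binom{A}{s}$. Conversely, if $Q_A\supseteq S_Y$, then $\phi(A)\in\mathcal C_H(S_Y)=\phi(\mathcal M_Y)$, so $Y\subseteq A$. Thus $Q_A$ contains exactly the family members indexed by $\binom{A}{s}$.

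This is precisely the hypothesis of \cref{lem:camera-TJ-general-s-star-common-core} with $s=r-1$ and $(s+1)$-subsets $A$. The side conditions hold: $k\ge r+1\ge s+1$ and $n\ge 2r+1\ge s+2$. The lemma gives a $(k-r)$-clique $T$ and distinct vertices $x_1,\dots,x_n\notin T$ with $S_X=T+\sum_{i\in X}x_i$. Taking two $s$-subsets of $A$, their union is $T+\sum_{i\in A}x_i$, which has size $k$, so $Q_A=T+\sum_{i\in A}x_i$.

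Since every pair $x_i,x_j$ lies in some $Q_A$ (using $r\ge 2$), the set $K:=T\cup\{x_1,\dots,x_n\}$ is a clique of $H$ of size $n+k-r$. Every $k$-subset of $K$ is then a vertex of $\TJ{k}{H}$, so
\[
\binom{n}{r}=|V(\TJ{k}{H})|\ge\binom{n+k-r}{k}=\binom{n+k-r}{n-r}.
\]
With $k\ge r+1$, monotonicity in the upper argument gives $\binom{n+k-r}{n-r}\ge\binom{n+1}{n-r}=\binom{n+1}{r+1}$. This exceeds $\binom{n}{r}$ because $\binom{n+1}{r+1}/\binom{n}{r}=(n+1)/(r+1)>1$, which is the desired contradiction.

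The main obstacle is the exactness transfer in the second paragraph: showing that $Q_A$ contains no extra $S_Y$ and that the $S_X$ are distinct, using that $\phi$ maps the stars bijectively onto the star cliques $\mathcal C_H(S_X)$. The rest is bookkeeping once \cref{lem:camera-TJ-general-s-star-common-core} is invoked. Note that the upper bound $k\le n-r-1$ is used only to rule out top cliques; the lower bound $k\ge r+1$ is what makes the binomial count too large.
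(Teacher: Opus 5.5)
Your proposal is correct and follows the paper's proof essentially step for step. Both arguments force the images of the Johnson stars to be star cliques $\mathcal C_H(S_X)$, transfer the exact incidence to $Q_A=\phi(A)$, apply \cref{lem:camera-TJ-general-s-star-common-core} with $s=r-1$, and count vertices using the clique $T\cup\{x_1,\dots,x_n\}$ of size $n+k-r$. The only difference is cosmetic and lies in the final count: the paper adds $\binom{n}{k}$ extra $k$-cliques inside $\{x_1,\dots,x_n\}$, which avoid the nonempty $T$, while you bound $\binom{n+k-r}{k}\ge\binom{n+1}{r+1}>\binom{n}{r}$ directly; both routes work.
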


\begin{proof}
Suppose to the contrary that $J(n,r)\cong \TJ{k}{H}$ for some graph $H$, where $n>2r\ge 6$ and $r+1\le k\le n-r-1$.
Fix an isomorphism $\phi:J(n,r)\to\TJ{k}{H}$.
For every $(r-1)$-subset $X\subseteq [n]$, let
\[
\mathcal M_X:=\{X\cup\{y\}:y\in [n]\setminus X\}
\]
be the corresponding Johnson $(r-1)$-star.
By \cref{lem:camera-JnR-max-stars-stable}, these are exactly the maximum cliques of $J(n,r)$, each of size $n-r+1$.
Fix one such clique $\mathcal M_X$.
Since $k\le n-r-1$, we have
\[
n-r+1\ge k+2>k+1,
\]
while every top clique of $\TJ{k}{H}$ has size exactly $k+1$ by \cref{cor:camera-TJ-maximal-cliques}. Because isomorphisms preserve maximal cliques, the $\phi$-image of $\mathcal M_X$ is a maximal clique of $\TJ{k}{H}$ and hence is either a star clique or a top clique. Therefore, the $\phi$-image of $\mathcal M_X$ cannot be a top clique. Since $\mathcal M_X$ was arbitrary, every Johnson maximum clique is realized as a star clique in the witness.
Thus, for every $(r-1)$-subset $X\subseteq [n]$ there exists a $(k-1)$-clique $S_X$ of $H$ such that
\[
\phi(\mathcal M_X)=\mathcal C_H(S_X).
\]
Distinct Johnson stars have distinct images under the isomorphism, and each such non-singleton star clique is determined by its core, namely the intersection of all its vertices.
Hence, the cliques $S_X$ are pairwise distinct.

For every $r$-subset $A\subseteq [n]$, let $Q_A:=\phi(A)$ be the $k$-clique of $H$ representing the vertex $A$ of $J(n,r)$.
If $X\subseteq A$ and $|X|=r-1$, then the Johnson vertex $A$ lies in the Johnson star $\mathcal M_X$.
Therefore, $Q_A$ belongs to the star clique $\mathcal C_H(S_X)$, which means that $S_X\subseteq Q_A$.
Hence, $Q_A$ contains the $r$ cliques
\[
\mathcal F_A:=\{S_X:X\in \tbinom{A}{r-1}\}.
\]
These are exactly the members of the family $\displaystyle \{S_X:X\in \binom{[n]}{r-1}\}$ contained in $Q_A$, because the vertex $A$ of $J(n,r)$ lies in exactly the $r$ Johnson stars indexed by its $(r-1)$-subsets.
This family is precisely the configuration covered by \cref{lem:camera-TJ-general-s-star-common-core} with $s=r-1$.
We obtain a $(k-r)$-clique $T$ and pairwise distinct vertices $x_1,\dots,x_n$ such that
\[
S_X=T+\sum_{i\in X}x_i\qquad (X\in \tbinom{[n]}{r-1}).
\]

Now fix an $r$-subset $A\subseteq [n]$.
For each $a\in A$, the clique $Q_A$ contains
\[
S_{A\setminus\{a\}}=T+\sum_{i\in A\setminus\{a\}}x_i.
\]
If $a,b\in A$ are distinct, then the union of these two contained subcliques is
\[
S_{A\setminus\{a\}}\cup S_{A\setminus\{b\}}=T+\sum_{i\in A}x_i.
\]
This union has size
\[
|T|+|A|=(k-r)+r=k.
\]
Since $Q_A$ is itself a $k$-clique containing that union, we obtain
\[
Q_A=T+\sum_{i\in A}x_i.
\]
It follows that every $r$-subset of $\{x_1,\dots,x_n\}$ is a clique in $H$.
Indeed, for any such $r$-subset $A$, the vertices $T\cup\{x_i:i\in A\}$ form the clique $Q_A$.
Because $n>2r$, every pair $x_i,x_j$ is contained in some $r$-subset of $[n]$, so the vertices $x_1,\dots,x_n$ are pairwise adjacent.
Moreover, every vertex of $T$ is adjacent to each $x_i$. To see this, choose an $(r-1)$-subset $X\subseteq [n]$ with $i\in X$; then both the chosen vertex of $T$ and $x_i$ belong to the clique
\[
S_X=T+\sum_{h\in X}x_h.
\]
Hence,
\[
H[T\cup\{x_1,\dots,x_n\}]\cong K_{n+k-r}.
\]
Since $k\ge r+1$, the clique $T$ is nonempty.
Therefore, every $k$-subset of $\{x_1,\dots,x_n\}$ is a $k$-clique of $H$ distinct from every $Q_A$, because each $Q_A$ contains the nonempty set $T$.
Hence,
\[
|V(\TJ{k}{H})|\ge \binom{n}{r}+\binom{n}{k}>\binom{n}{r}=|V(J(n,r))|,
\]
a contradiction.
Thus, no such $k$ exists.
\end{proof}

We next record the Johnson clique number in the range needed for the large-$k$ exclusion.
It follows from the same star/top classification of cliques in Johnson graphs~\cite[Cor.~3.5]{ShuldinerO22}.
\begin{lemma}\label{lem:camera-JnR-clique-number-half}
Let $n\ge 2r\ge 4$.
Then
\[
\omega(J(n,r))=n-r+1.
\]
\end{lemma}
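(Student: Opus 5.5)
We need $\omega(J(n,r))=n-r+1$ for $n\ge 2r\ge 4$. The plan is to prove matching lower and upper bounds by a local argument at a single vertex, using the neighborhood description of \cref{lem:camera-jnr-neighborhood-line}.

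For the lower bound, fix any $(r-1)$-subset $S\subseteq[n]$. The Johnson star with core $S$ consists of the sets $S\cup\{y\}$ with $y\in[n]\setminus S$. Any two of them meet in exactly $S$, which has size $r-1$, so they form a clique of size $n-(r-1)=n-r+1$.

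For the upper bound, let $Q$ be any clique of $J(n,r)$ and fix a vertex $A\in Q$. The remaining members of $Q$ form a clique inside $N_{J(n,r)}(A)$. By \cref{lem:camera-jnr-neighborhood-line} this neighborhood is isomorphic to $L(K_{r,n-r})$, the $r\times(n-r)$ rook graph. As in the proof of \cref{lem:camera-rook-max-cliques}, two cells in different rows and different columns are nonadjacent. Hence every clique of the rook graph lies in a single row or a single column, and so has size at most $\max\{r,n-r\}$. Since $n\ge 2r$, this maximum is $n-r$. Therefore $|Q|\le 1+(n-r)=n-r+1$, and combining with the lower bound gives the formula.

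The only point that needs care is that the rook-graph clique bound is stated in \cref{lem:camera-rook-max-cliques} only for the square case $L(K_{r,r})$, so it must be redone for the rectangular case. The row/column argument carries over verbatim: a clique containing two cells from different rows and different columns is impossible. This shows the clique number of $L(K_{r,n-r})$ is $\max\{r,n-r\}$. I expect no real obstacle beyond this. The hypothesis $r\ge 2$ is not needed for the bound itself, and one can alternatively cite \cite[Cor.~3.5]{ShuldinerO22} directly.
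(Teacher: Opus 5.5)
Your proof is correct, but it takes a different route from the paper. The paper gives no argument for this lemma and cites the star/top classification of cliques in Johnson graphs \cite[Cor.~3.5]{ShuldinerO22}.

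You instead bound every clique locally through one of its vertices, using \cref{lem:camera-jnr-neighborhood-line} and the rectangular rook bound $\omega(L(K_{r,n-r}))=\max\{r,n-r\}$. You are right that this bound must be rechecked, since \cref{lem:camera-rook-max-cliques} is stated only for $L(K_{r,r})$. The check is short: if two cells of a clique share row $i$, a third cell outside row $i$ would have to share a column with both, which is impossible.

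The paper itself uses exactly this local argument for $n=2r$ in the proof of \cref{cor:camera-johnson-boundary-all}. So your version is self-contained relative to the paper's own tools and removes the external citation for this lemma.

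What the classification gives in addition is the identity of the maximum cliques: only stars when $n>2r$, and stars and tops when $n=2r$. The paper needs this in \cref{lem:camera-JnR-max-stars-stable,lem:camera-JnR-max-cliques-boundary}. Your local picture recovers it with one more remark:
\begin{itemize}
\item A full row $\{A-a+b: b\notin A\}$ together with $A$ is the Johnson star with core $A-a$.
\item A full column $\{A-a+b: a\in A\}$ together with $A$ is the Johnson top clique with top $A+b$.
\end{itemize}
For $n>2r$ only rows reach size $n-r$, and for $n=2r$ both rows and columns do. This also yields the incidence counts $r$ and $2r$ of maximum cliques through a vertex.
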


The next theorem excludes all larger values.
Here the argument changes: once $k>n-r$, top cliques are too large, so we use the neighborhood structure instead of the maximum-clique template.
\begin{theorem}\label{thm:camera-tj-large}
Let $n\ge 2r\ge 4$.
If $k>n-r$, then
\[
k\notin \KTJ(J(n,r)).
\]
\end{theorem}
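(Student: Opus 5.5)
The plan is to compare, at a single vertex, the maximal cliques of $J(n,r)$ through that vertex with the maximal cliques of a hypothetical $\TJ{k}{H}$ through the corresponding vertex. Suppose $J(n,r)\cong\TJ{k}{H}$ with $k>n-r$, and fix an isomorphism $\phi$. By \cref{lem:camera-JnR-clique-number-half}, $\omega(J(n,r))=n-r+1$. By \cref{cor:camera-TJ-maximal-cliques}, every maximal clique of $\TJ{k}{H}$ is a star clique $\mathcal C_H(S)$ or a top clique of size exactly $k+1\ge n-r+2>\omega(J(n,r))$. Hence no top clique occurs in the witness, and every maximal clique of $\TJ{k}{H}$ is a star clique. (Also $k\ge 2$, since $J(n,r)$ is not complete, so \cref{prop:TJ1} excludes $k=1$.)

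\textbf{Step 1: star cliques through a vertex partition its neighbourhood.} Fix a vertex $A$ of $J(n,r)$ and put $Q=\phi(A)$. Every neighbour $B$ of $Q$ satisfies $|Q\cap B|=k-1$, so $B\supseteq Q-d$ for a unique $d\in Q$. Thus $B$ lies in the star clique $\mathcal C_H(Q-d)$ and in no other star clique through $Q$. Let $D$ be the set of $d\in Q$ for which $\mathcal C_H(Q-d)$ contains a vertex other than $Q$. The family $\{\mathcal C_H(Q-d)\setminus\{Q\}: d\in D\}$ then partitions $N(Q)$. Each $\mathcal C_H(Q-d)$ with $d\in D$ is a clique of size at least $2$, and I will check that it is maximal. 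If it were properly contained in a larger clique, that clique would have size at least $3$ and would, by \cref{thm:camera-TJ-clique-structure}, be of star type. Its core would contain $Q-d$, so the core equals $Q-d$, and the larger clique is contained in $\mathcal C_H(Q-d)$ itself. So the maximal cliques through $Q$ are exactly these stars. They pairwise meet only in $Q$, and
\[
\sum_{M\ni Q,\ M\text{ maximal}}(|M|-1)=|N(Q)|.
\]

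\textbf{Step 2: the same count in $J(n,r)$ fails.} By the star/top classification of Johnson cliques~\cite[Thm.~3.4 and Cor.~3.5]{ShuldinerO22}, the maximal cliques of $J(n,r)$ through $A$ are the $r$ Johnson stars with cores $A-a$, each of size $n-r+1$, and the $n-r$ Johnson tops $A+b$, each of size $r+1$. Since $r\ge2$ and $n-r\ge2$, every such top is maximal and is not contained in a star, and all these cliques are distinct. Transporting Step 1 through $\phi$ gives
\[
r(n-r)+(n-r)r=2r(n-r)
\]
as the left-hand side. The right-hand side, however, is $|N(A)|=r(n-r)>0$, which is a contradiction.

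The main obstacle is justifying Step 1 carefully, namely that the nontrivial stars through $Q$ are maximal and that no other maximal cliques pass through $Q$. Both facts follow from \cref{cor:camera-TJ-maximal-cliques} together with the exclusion of tops, as sketched above. The Johnson-side input, that each vertex lies in exactly $n$ maximal cliques of the stated sizes when $n\ge 2r\ge4$, is the standard classification, and I would just cite it. Note that the argument does not need the stable assumption $n>2r$: it also covers the boundary case $n=2r$, because the two clique families there remain distinct even though they have equal size.
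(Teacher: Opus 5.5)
Your proof is correct, and its first half is the paper's argument. Top cliques are excluded because $k+1>\omega(J(n,r))=n-r+1$, so $H$ has no $(k+1)$-clique. The neighbours of $Q$ then split, according to the deleted vertex $d\in Q$, into the cliques $\mathcal C_H(Q-d)\setminus\{Q\}$, with no edges between different parts.

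Where you diverge is the finish. The paper stops at this point: $N(Q)$ is a disjoint union of cliques, whereas $N(A)\cong L(K_{r,n-r})$ (\cref{lem:camera-jnr-neighborhood-line}) contains an induced $P_3$ because $r,n-r\ge 2$. You instead turn the partition into a statement about the maximal cliques through $Q$, which costs you the extra maximality check in Step~1. You then compare $\sum(|M|-1)$ with the Johnson side. That comparison needs the classification of all \emph{maximal} cliques of $J(n,r)$ through a vertex, including the non-maximum tops $A+b$ when $n>2r$. The paper's lemmas only record maximum cliques, but what you need does follow from \cref{thm:camera-TJ-clique-structure} with $H=K_n$ and $k=r$, as you sketch.

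Your count is the same obstruction in another form: the rows and columns of $L(K_{r,n-r})$ cover each vertex exactly twice. So the paper's version is simply the lighter of the two, needing neither maximality arguments nor the Johnson clique classification.

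One small slip: in Step~1 the core of the larger star-type clique is \emph{contained in} $Q\cap B=Q-d$, not the other way round. Equality follows from the sizes, so nothing breaks.
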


\begin{proof}
Suppose to the contrary that $J(n,r)\cong \TJ{k}{H}$ for some $k>n-r$.
A top clique in $\TJ{k}{H}$ has size $k+1$.
By \cref{lem:camera-JnR-clique-number-half}, we have $\omega(J(n,r))=n-r+1$.
Thus, the inequality $k>n-r$ implies $k+1>\omega(J(n,r))$.
If $H$ contained a $(k+1)$-clique $U$, then $\mathcal T_H(U)$ would be a top clique of size $k+1$ in $\TJ{k}{H}$, contradicting this clique-number bound.
Hence, $H$ contains no $(k+1)$-clique.
Fix a vertex $Q$ of $\TJ{k}{H}$.
Its neighbors partition according to the deleted vertex of $Q$, and each part is a clique.
If two neighbors from distinct parts were adjacent, then they would differ in exactly one vertex. Since they delete different vertices of $Q$, this can happen only if they insert the same outside vertex. That outside vertex would then be adjacent to all of $Q$, and $H$ would contain a $(k+1)$-clique, impossible.
Hence, every neighborhood in $\TJ{k}{H}$ is a disjoint union of cliques.
On the other hand, every vertex neighborhood in $J(n,r)$ is isomorphic to $L(K_{r,n-r})$ by \cref{lem:camera-jnr-neighborhood-line}, and because $r,n-r\ge 2$, this line graph contains an induced $P_3$.
So it is not a disjoint union of cliques.
This contradiction proves the claim.
\end{proof}

These ingredients already settle the stable higher-rank regime.
\begin{corollary}\label{cor:camera-tj-stable-higher-rank-all}
If $n>2r\ge 6$, then
\[
\KTJ(J(n,r))=\{r,n-r\}.
\]
\end{corollary}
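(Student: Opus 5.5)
The corollary follows by assembling the results just established, so the plan is to partition the integers $k\ge 1$ into ranges and quote the appropriate result for each range.

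First the inclusion $\{r,n-r\}\subseteq\KTJ(J(n,r))$. We have $\TJ{r}{K_n}\cong J(n,r)$, which shows $r$ is feasible. Johnson duality gives $J(n,r)\cong J(n,n-r)\cong\TJ{n-r}{K_n}$, which shows $n-r$ is feasible.

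For the reverse inclusion, fix $k\in\KTJ(J(n,r))$. The hypothesis $n>2r\ge 6$ gives $r\ge 3$ and $n-r\ge r+1$, so the following ranges cover every positive integer:
\[
1\le k\le r-1,\qquad k=r,\qquad r+1\le k\le n-r-1,\qquad k=n-r,\qquad k>n-r.
\]
\begin{enumerate}[(i)]
\item The range $1\le k\le r-1$ is excluded by \cref{cor:camera-JnR-no-small-TJ}, whose hypothesis $n>2r\ge 4$ holds.
\item The middle range $r+1\le k\le n-r-1$ is excluded by \cref{thm:camera-tj-no-middle}, whose hypothesis is exactly $n>2r\ge 6$. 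This range may be empty when $n=2r+1$; the argument is unaffected.
\item The range $k>n-r$ is excluded by \cref{thm:camera-tj-large}, whose hypothesis $n\ge 2r\ge 4$ holds.
\end{enumerate}
Only $k=r$ and $k=n-r$ survive, which proves $\KTJ(J(n,r))=\{r,n-r\}$.

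There is no genuine obstacle here; the substantive work lies in the cited results. The one point needing care is checking that the hypotheses of each cited result hold in this regime, and that the ranges together leave no gap. The endpoints $r$ and $n-r$ are exactly the values not covered by the three exclusions, and since $n-r\ge r+1$ the listed ranges are consecutive and disjoint.
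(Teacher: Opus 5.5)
Your proposal is correct and matches the paper's proof: the complete-graph witnesses $\TJ{r}{K_n}$ and $\TJ{n-r}{K_n}$ (with Johnson duality) give feasibility, and \cref{cor:camera-JnR-no-small-TJ}, \cref{thm:camera-tj-no-middle}, and \cref{thm:camera-tj-large} exclude the remaining ranges. The one small difference is that the paper excludes $k=1$ separately via \cref{prop:TJ1}, while you fold it into \cref{cor:camera-JnR-no-small-TJ}. That is legitimate, since that corollary's statement already includes $k=1$.
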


\begin{proof}
The values $r$ and $n-r$ are feasible because
\[
\TJ{r}{K_n}\cong J(n,r)
\quad\text{and}\quad
\TJ{n-r}{K_n}\cong J(n,n-r)\cong J(n,r).
\]
Conversely, because $J(n,r)$ is not complete, \cref{prop:TJ1} excludes $k=1$.
By \cref{cor:camera-JnR-no-small-TJ}, no value in $\{2,3,\dots,r-1\}$ is feasible.
If $k\notin\{r,n-r\}$, then either
\[
r+1\le k\le n-r-1
\]
or
\[
k>n-r.
\]
The first case contradicts \cref{thm:camera-tj-no-middle}, and the second contradicts \cref{thm:camera-tj-large}.
Therefore, only $r$ and $n-r$ remain feasible.
\end{proof}

We first determine the maximum cliques in the boundary case.
The subsequent low-$k$ exclusion again uses the star-count contradiction, now with the boundary count $2r$.
This boundary count is a direct incidence count from the standard star/top classification of maximum cliques in Johnson graphs~\cite[Thm.~3.4 and Cor.~3.5]{ShuldinerO22}.
\begin{lemma}\label{lem:camera-JnR-max-cliques-boundary}
Assume $n=2r\ge 4$.
Then every maximum clique of $J(2r,r)$ is either a Johnson star clique or a Johnson top clique, and every such clique has size $r+1$.
Moreover, every vertex of $J(2r,r)$ lies in exactly $2r$ maximum cliques.
\end{lemma}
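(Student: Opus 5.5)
The proof works locally at a fixed vertex. By \cref{lem:camera-JnR-clique-number-half} with $n=2r$, we have $\omega(J(2r,r))=r+1$. Johnson stars, indexed by $(r-1)$-subsets, and Johnson tops, indexed by $(r+1)$-subsets, are cliques of exactly this size $r+1$, so both families consist of maximum cliques. For the converse, take any maximum clique $\mathcal Q$ of size $r+1\ge 3$ and classify it.

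The classification could be quoted directly from the star/top theorem~\cite[Thm.~3.4]{ShuldinerO22}. To stay self-contained, I would instead reuse the argument of \cref{thm:camera-TJ-clique-structure} with $H=K_n$. There $\TJ{r}{K_n}=J(n,r)$, and every $k$-subset is a clique. Applied to a clique of $J(n,r)$ with at least $3$ vertices, that theorem says the vertices either all contain a common $(r-1)$-set $S$, or all lie inside a common $(r+1)$-set $U$. In the first case $\mathcal Q$ sits inside the Johnson star with core $S$; in the second it sits inside the Johnson top with top $U$. Since $|\mathcal Q|=r+1$ equals the size of each of these cliques, $\mathcal Q$ is exactly that star or that top. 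This proves the first assertion.

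For the incidence count, fix a vertex $A$, an $r$-subset of $[2r]$. A Johnson star contains $A$ exactly when its core $S$ is one of the $r$ subsets $A-a$. A Johnson top contains $A$ exactly when its top $U$ is $A+b$ with $b\in[2r]\setminus A$, and there are $2r-r=r$ such choices. It remains to check that no clique is counted twice, meaning no star coincides with a top. A star's vertices have intersection of size $r-1$. A top's $r+1$ vertices $U-u$ have intersection $U$ minus all of its elements, which is empty. Since $r\ge 2$, we have $r-1\ge 1>0$, so the two families are disjoint. Distinct cores give distinct stars, because the core is recovered as the intersection of the vertices, and distinct tops give distinct cliques, because the top is recovered as the union. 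Hence $A$ lies in exactly $r+r=2r$ maximum cliques.

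The main obstacle is the classification step. That the star and top cliques have size $r+1$ and are maximum is immediate. The care goes into the converse, that every maximum clique is a full star or top, and into ruling out coincidences between the two families. The clique-structure theorem together with the size comparison handles both, so the remaining work is routine counting.
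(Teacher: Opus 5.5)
Your proof is correct and takes the route the paper indicates: the star/top classification of Johnson cliques followed by a direct incidence count. The one difference is that you derive the classification internally from \cref{thm:camera-TJ-clique-structure} with $H=K_{2r}$ instead of citing \cite{ShuldinerO22}; this is legitimate because that theorem precedes the lemma and $\TJ{r}{K_{2r}}=J(2r,r)$. The same classification also shows that every clique with at least $3$ vertices has size at most $r+1$, so you could drop the appeal to \cref{lem:camera-JnR-clique-number-half} (which the paper only cites) and make the argument fully self-contained.
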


This corollary yields the low-$k$ exclusion in the boundary case.
\begin{corollary}\label{cor:camera-JnR-no-small-TJ-boundary}
For every $r\ge 2$,
\[
\{1,\dots,r-1\}\cap \KTJ(J(2r,r))=\varnothing.
\]
\end{corollary}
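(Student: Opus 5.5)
We mimic the proof of \cref{cor:camera-JnR-no-small-TJ}, replacing the stable incidence count $r$ by the boundary count $2r$ from \cref{lem:camera-JnR-max-cliques-boundary}. Suppose to the contrary that $J(2r,r)\cong \TJ{k}{H}$ for some $1\le k\le r-1$, and fix an isomorphism $\phi:J(2r,r)\to\TJ{k}{H}$.

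First, $J(2r,r)$ is not complete when $r\ge 2$: the sets $\{1,\dots,r\}$ and $\{r+1,\dots,2r\}$ are disjoint, hence nonadjacent. So \cref{prop:TJ1} gives $k\ne 1$. If $r=2$ nothing remains, so we may assume $r\ge 3$ and $2\le k\le r-1$.

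By \cref{lem:camera-JnR-max-cliques-boundary}, every maximum clique of $J(2r,r)$ has size $r+1$, and every vertex lies in exactly $2r$ of them. Isomorphisms preserve maximal cliques, so each $\phi$-image of a maximum clique is a maximal clique of $\TJ{k}{H}$. By \cref{cor:camera-TJ-maximal-cliques}, such an image is either a star clique or a top clique, and a top clique has size exactly $k+1\le r<r+1$. Hence every maximum clique of $J(2r,r)$, whether it is a Johnson star or a Johnson top, is realized as a star clique in the witness.

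Now fix a vertex $A$ of $J(2r,r)$. The $2r$ maximum cliques through $A$ are pairwise distinct, so their images are $2r$ distinct star cliques containing $\phi(A)$. This contradicts \cref{lem:camera-TJ-clique-at-most-k-stars}, which allows at most $k\le r-1<2r$ such star cliques.

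The only step that needs care is the observation that Johnson top cliques are also forced to be stars here. This follows from the same size comparison, since both Johnson types have size $r+1>k+1$. With that in place the count of $2r$ applies directly, and there is no real obstacle.
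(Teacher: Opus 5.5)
Your proof is correct and follows the paper's argument essentially step for step. You exclude $k=1$ by non-completeness, and the size comparison $k+1\le r<r+1$ forces all $2r$ maximum cliques through a vertex, Johnson stars and tops alike, to be realized as star cliques. That contradicts the bound of at most $k$ star cliques per vertex in \cref{lem:camera-TJ-clique-at-most-k-stars}.
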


\begin{proof}
Suppose to the contrary that $J(2r,r)\cong \TJ{k}{H}$ for some $1\le k\le r-1$.
Fix an isomorphism $\phi:J(2r,r)\to\TJ{k}{H}$.
Because $J(2r,r)$ is not complete for $r\ge 2$, \cref{prop:TJ1} gives $k\ne 1$.
Hence, $2\le k\le r-1$.
If $r=2$, this is already impossible; hence assume $r\ge 3$.
By \cref{lem:camera-JnR-max-cliques-boundary}, every maximum clique of $J(2r,r)$ has size $r+1$, and every vertex lies in exactly $2r$ such maximum cliques. Because isomorphisms preserve maximal cliques, the $\phi$-image of each maximum clique of $J(2r,r)$ is a maximal clique of $\TJ{k}{H}$ and hence is either a star clique or a top clique by \cref{cor:camera-TJ-maximal-cliques}. A top clique in $\TJ{k}{H}$ has size exactly $k+1$, and
\[
k+1\le r<r+1.
\]
Therefore, no maximum clique of $J(2r,r)$ can be a top clique in the witness, so every maximum clique must be realized as a star clique.
Fix a vertex $A$ of $J(2r,r)$.
The vertex $\phi(A)$ of $\TJ{k}{H}$ therefore lies in at least $2r$ distinct star cliques.
But this contradicts \cref{lem:camera-TJ-clique-at-most-k-stars}, because $2r>k$.
Therefore, no such $k$ exists.
\end{proof}

We can now settle the boundary TJ formula.
\begin{theorem}\label{thm:camera-tj-boundary}
For every $r\ge 2$,
\[
\KTJ(J(2r,r))=\{r\}.
\]
\end{theorem}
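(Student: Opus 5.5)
The plan is to combine the witness $\TJ{r}{K_{2r}}\cong J(2r,r)$ with the exclusions already available. The inclusion $r\in\KTJ(J(2r,r))$ is immediate from $\TJ{r}{K_{2r}}\cong J(2r,r)$. For the converse, \cref{cor:camera-JnR-no-small-TJ-boundary} already excludes every $k\le r-1$. Applying \cref{thm:camera-tj-large} with $n=2r$ (so $n\ge 2r\ge 4$ and $n-r=r$) excludes every $k>r$. These two results cover all $k\ne r$, so $\KTJ(J(2r,r))=\{r\}$.

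I will also check that the hypotheses match exactly. \cref{thm:camera-tj-large} requires $n\ge 2r\ge 4$, which holds for $n=2r$ with $r\ge 2$, and it forbids all $k>n-r=r$. Its proof uses \cref{lem:camera-JnR-clique-number-half}, which gives $\omega(J(2r,r))=r+1$. It also uses the fact that $L(K_{r,r})$ contains an induced $P_3$, which needs $r\ge 2$.

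\cref{cor:camera-JnR-no-small-TJ-boundary} is stated for every $r\ge 2$. For $r=2$ the range $\{1\}$ is excluded there, directly via \cref{prop:TJ1}, since $J(4,2)$ is not complete.

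No real obstacle is expected, since the statement is a bookkeeping combination of the two preceding exclusion results with the complete-graph witness. The only point to double-check is that no value of $k$ falls through between the ranges $k\le r-1$ and $k\ge r+1$. This holds because the two ranges together with $\{r\}$ exhaust all positive integers.
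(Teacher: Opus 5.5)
Your proposal is correct and is essentially the paper's proof. The witness $\TJ{r}{K_{2r}}\cong J(2r,r)$ gives $r$, \cref{cor:camera-JnR-no-small-TJ-boundary} excludes $k<r$ (with $k=1$ ruled out by \cref{prop:TJ1}), and the case $n=2r$ of \cref{thm:camera-tj-large} excludes $k>r$.
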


\begin{proof}
The inclusion $\{r\}\subseteq \KTJ(J(2r,r))$ is immediate from the complete-graph witness:
\[
\TJ{r}{K_{2r}}\cong J(2r,r).
\]
For the reverse inclusion, let $k\in \KTJ(J(2r,r))$.
Because $J(2r,r)$ is not complete for $r\ge 2$, \cref{prop:TJ1} gives $k\ne 1$.
If $k<r$, then \cref{cor:camera-JnR-no-small-TJ-boundary} gives a contradiction.
If $k>r$, then the special case $n=2r$ of \cref{thm:camera-tj-large} gives a contradiction.
Hence, $k=r$.
\end{proof}

We are now ready to prove \Cref{thm:camera-tj-johnson}.
\begin{proof}[Proof of \Cref{thm:camera-tj-johnson}]
If $s=1$, then $J(n,r)\cong J(n,1)$ by Johnson duality, so the claim follows from \cref{cor:camera-johnson-complete-levels-tj}.
Assume from now on that $s\ge 2$.
By Johnson duality, we may replace $r$ by $s$ and therefore assume $r=s\le n/2$.
If $n=2s$, then the claim is exactly \cref{thm:camera-tj-boundary}.
If $n>2s$ and $s=2$, then the claim is exactly \cref{thm:camera-KTJ-Jn2}.
If $n>2s$ and $s\ge 3$, then the claim is exactly \cref{cor:camera-tj-stable-higher-rank-all}.
These cases cover all remaining possibilities.
\end{proof}

\section*{Acknowledgements}
The research was supported by Vietnam National University, Hanoi under the project QG.25.07 ``A study on reconfiguration problems from algorithmic and graph-theoretic perspectives''.


\printbibliography


\end{document}